\documentclass[11pt]{article}
\usepackage[utf8]{inputenc}
\usepackage{amsmath, amsthm, amssymb,fullpage, xcolor}
\usepackage{bbm}
\usepackage{boxedminipage}
\usepackage{libertine}
\usepackage{libertinust1math}
\usepackage[T1]{fontenc}
\usepackage[font=small,labelfont=bf]{caption}
\usepackage{hyperref}
\usepackage{microtype}
\usepackage[chicago]{xellipsis}
\usepackage{pgf,tikz,pgfplots}
\pgfplotsset{compat=1.14}
\usepackage{mathrsfs}
\usepackage{enumitem}
\usepackage{hyperref}
\usepackage{cleveref}
\usepackage{booktabs}
\usepackage{marginnote}
\usepackage{threeparttable}

\usetikzlibrary{arrows}
\hypersetup{
    colorlinks=true, 
    citecolor=blue, 
    linkcolor=blue, 
    urlcolor=blue, 
    linktoc=all 
}

\newtheorem{theorem}{Theorem}[section]
\newtheorem{lemma}[theorem]{Lemma}
\newtheorem{corollary}[theorem]{Corollary}
\newtheorem{proposition}[theorem]{Proposition}
\theoremstyle{definition}
\newtheorem{definition}[theorem]{Definition}
\newtheorem{observation}[theorem]{Observation}
\newtheorem{remark}[theorem]{Remark}
\newtheorem{problem}[theorem]{Problem}

\newtheorem{conjecture}[theorem]{Conjecture}

\newtheorem{assumption}{Assumption}

\theoremstyle{theorem}
\providecommand{\gaussiangenericname}{}

\newcommand{\makegaussian}[2]{%
  \newenvironment{#1}[1]
  {%
   \renewcommand\gaussiangenericname{#2}%
   \renewcommand\theinnergaussiangeneric{{##1}G}%
   \innergaussiangeneric
  }
  {\endinnergaussiangeneric}
}
\makegaussian{gtheorem}{Theorem}
\makegaussian{glemma}{Lemma}
\makegaussian{gproposition}{Proposition}

\providecommand{\restategenericname}{}

\newcommand{\makerestate}[2]{%
  \newenvironment{#1}[1]
  {%
    \renewcommand\restategenericname{#2}%
   \renewcommand\theinnerrestategeneric{{##1}}%
   \innerrestategeneric
  }
  {\endinnerrestategeneric}
}
\makerestate{rtheorem}{Theorem}
\makerestate{rlemma}{Lemma}
\makerestate{rproposition}{Proposition}

\newcommand{\poly}{\mathrm{poly}}

\newcommand{\im}{\Im}
\newcommand{\defeq}{:=}

\newcommand{\E}{\mathbb{E}}

\newcommand{\R}{\mathbb{R}}
\newcommand{\C}{\mathbb{C}}

\renewcommand{\P}{\mathbb{P}}

\newcommand{\vol}{\mathrm{Leb}}
\newcommand{\dE}{\mathbb{E}}

\newcommand{\eps}{\varepsilon}

\newcommand{\Tr}{\mathrm{Tr}}

\newcommand{\gap}{\mathrm{gap}}

\newcommand{\T}{\intercal}
\newcommand{\calN}{\mathcal{N}}

\newcommand{\leb}{\mathrm{Leb}}

\newcommand{\rank}{\mathrm{rank}}

\newcommand{\crv}{C_{\mathrm{RV}}}
\newcommand{\density}{\delta_\infty}

\newcommand{\expbound}{2e^{-2 n}}

\newcommand{\md}{\,\middle|\,}
\newcommand{\indicator}[1]{\left\{{#1}\right\}}


\newcommand{\blambda}{\boldsymbol{\lambda}}

\newcommand{\bnu}{\boldsymbol{\nu}}

\newcommand{\bm}{\boldsymbol{m}}

\newcommand{\bx}{{\boldsymbol{x}}}
\newcommand{\by}{\boldsymbol{y}}
\newcommand{\bz}{\boldsymbol{z}}
\newcommand{\bA}{\boldsymbol{A}}

\newcommand{\bE}{\boldsymbol{E}}

\newcommand{\bG}{\boldsymbol{G}}
\newcommand{\bH}{\boldsymbol{H}}

\newcommand{\bM}{\boldsymbol{M}}
\newcommand{\bN}{\boldsymbol{N}}

\newcommand{\bS}{\boldsymbol{S}}

\newcommand{\bW}{\boldsymbol{W}}
\newcommand{\bX}{\boldsymbol{X}}
\newcommand{\bY}{\boldsymbol{Y}}
\newcommand{\bZ}{\boldsymbol{Z}}

\title{Overlaps, Eigenvalue Gaps, and Pseudospectrum under real Ginibre and Absolutely Continuous Perturbations}
\author{Jess Banks \thanks{Supported by  the NSF Graduate Research Fellowship Program under Grant DGE-1752814.}\\ jess.m.banks@berkeley.edu \\ UC Berkeley \and Jorge Garza-Vargas\\jgarzavargas@berkeley.edu\\ UC Berkeley \and  Archit Kulkarni \thanks{Supported by a James H. Simons Fellowship.} \\ akulkarni@berkeley.edu\\ UC Berkeley \and Nikhil Srivastava \thanks{Supported by NSF Grant CCF-1553751.} \\ nikhil@math.berkeley.edu \\  UC Berkeley }
\date{\today}
\begin{document}

\maketitle
\begin{abstract}
	Let $G_n$ be an $n \times n$ matrix with real i.i.d. $N(0,1/n)$ entries, let $A$ be a real $n \times n$ matrix with $\Vert A \Vert \le 1$, and let $\gamma \in (0,1)$.  We show that with probability $0.99$, $A + \gamma G_n$ has all of its eigenvalue condition numbers bounded by $O\left(n^{5/2}/\gamma^{3/2}\right)$ and eigenvector condition number bounded by $O\left(n^3 /\gamma^{3/2}\right)$.  Furthermore, we show that for any $s > 0$, the probability that $A + \gamma G_n$ has two eigenvalues within distance at most $s$ of each other is $O\left(n^4 s^{1/3}/\gamma^{5/2}\right).$ In fact, we show the above statements hold in the more general setting of non-Gaussian perturbations  with real, independent, absolutely continuous entries with a finite moment assumption and appropriate normalization. 
	
	This extends the previous work \cite{banks2019gaussian} which proved an eigenvector condition number bound of $O\left(n^{3/2} / \gamma\right)$ for the simpler case of \emph{complex} i.i.d. Gaussian matrix perturbations.
	The case of real perturbations introduces several challenges stemming from the weaker anticoncentration properties of real vs. complex random variables. A key ingredient in our proof is new lower tail bounds on the small singular values of the complex shifts $z-(A+\gamma G_n)$ which recover the tail behavior of the  complex Ginibre ensemble when $\Im z\neq 0$. This yields sharp control on the area of the pseudospectrum $\Lambda_\eps(A+\gamma G_n)$ in terms of the pseudospectral parameter $\eps>0$, which is sufficient to bound the overlaps and eigenvector condition number via a limiting argument.
	
\end{abstract}

\section{Introduction}
 The stability of the eigenvalues of a matrix under perturbations is a central issue in numerical analysis, random matrix theory, control theory, and other areas of mathematics. Suppose 
 $$X=\sum_{i=1}^n \lambda_i {v_i w_i^*}$$ is a diagonalizable matrix with distinct eigenvalues $\lambda_1,\ldots,\lambda_n\in\C$ and left and right eigenvectors $\{w_i^*, v_i\}_{i=1}^n$ normalized so that $w_i^*v_i=1$. Then the speed at which $\lambda_i$ moves under an arbitrary perturbation is governed by its {\em eigenvalue condition number} (also called {\em overlap}\footnote{ The $n\times n$ \textit{overlap matrix} of $X$ is defined as $\mathscr{O}(X)_{i,j} = v_j^\ast v_i \overline{w_j^\ast w_i}$, so that $\mathscr{O}(X)_{i,i} = \kappa(\lambda_i)^2.$} in the mathematical physics literature), defined as:
$$
    \kappa(\lambda_i) \defeq \|v_i\| \|w_i\|,\quad i=1,\ldots,n.
$$
All of the eigenvalue condition numbers are equal to one for normal $X$, and approach $\infty$ as $X$ approaches a non-diagonalizable matrix, thus constituting a quantitative measure of nonnormality.

In this paper, we study the extent to which adding a small {\em real} random matrix to an arbitrary real matrix tames its eigenvalue condition numbers and other related parameters. 
Specifically, we consider random matrices of type $$X=A+\gamma \bM$$ 
where $A\in\R^{n\times n}$ is an arbitary deterministic matrix with $\|A\|\le 1$, $\gamma$ is a real parameter, and $\bM\in\R^{n\times n}$ is either a normalized {\em real Ginibre matrix} (i.e., with i.i.d. $N(0,1/n)$ entries) or more generically any random matrix with independent, real absolutely continuous entries. 
Our main result (Theorems \ref{thm:kappai-probabilistic-intro} and \ref{thm:gaussiankappai-probabilistic-intro}) is that with high probability, such an $X$ has {\em all} of its $\kappa(\lambda_i)$ bounded by a small polynomial in $n/\gamma$. Previously, such a result was only known in the case of {\em complex} Ginibre perturbations \cite{banks2019gaussian}, and no bound was known for all of the overlaps even in the special case of a centered real Ginibre matrix.\footnote{i.e., the case $A=0$; in this setting \cite{fyodorov2018statistics} derived very precise bounds for the overlaps corresponding to the real eigenvalues only.} Our results straightforwardly imply similar bounds on the {\em eigenvector condition number} of $X$, another measure of nonnormality of particular interest in numerical linear algebra.

As in \cite{banks2019gaussian}, the proofs of our theorems rely on studying the {\em $\eps-$pseudospectrum} of $X$, defined as:
\begin{align*}
    \Lambda_\eps(X) 
    &= \left\{z \in \C : \sigma_n(z - X) \le \eps \right\},
\end{align*}
where $\sigma_1 \ge \cdots \ge \sigma_n$ denote the singular values of a matrix in descending order. The connection between the pseudospectrum and the $\kappa(\lambda_i)$ is given by the elementary limiting formulas:
\begin{equation}\label{eqn:introrealformula}
2\sum_{\lambda_i\in\R}\kappa(\lambda_i) = \lim_{\eps\rightarrow 0} \frac{\leb_{\R}(\Lambda_\eps(X)\cap\R)}{\eps}
\end{equation}
and
\begin{equation}\label{eqn:introcomplexformula}
\pi\sum_{\lambda_i\in\C\setminus\R}\kappa(\lambda_i)^2 = \lim_{\eps\rightarrow 0} \frac{\leb_{\C}(\Lambda_\eps(X)\cap(\C\setminus \R))}{\eps^2},
\end{equation}
which may be proven by examining the spectral expansion of the resolvent. After some judicious switching of limits and integrals, the right hand sides can be controlled by obtaining bounds on the probabilities
\begin{equation}\label{eqn:lambdasketch}\P\{z\in \Lambda_\eps(X)\} = \P\{\sigma_n(z-X)\le \eps\}\end{equation}
for shifts $z\in\C$, {\em provided one obtains  the correct exponent} $\eps^1$ for $z\in\R$ and $\eps^2$ for $z\in\C\setminus\R$.

The pursuit of such bounds is the main technical theme of the paper, in contrast to much of the rest of random matrix theory where the emphasis is on obtaining sharp dependence on $n$. Specifically, our main probabilistic result (Theorems \ref{thm:singvalscomplexshifts-intro} and \ref{prop:gaussiansingvalscomplex-intro}) shows that the probability in \eqref{eqn:lambdasketch} can always be taken to be $O(\eps)$ for $z\in\R$ and $O(\eps^2/|\Im(z)|)$ for $z\notin \R$, which is good enough to take the limit as $\eps\rightarrow 0$ after establishing that there are unlikely to be eigenvalues of $X$ near the real line but not on it.  Note that real Ginibre matrices are known to have $\Theta(\sqrt{n})$ real eigenvalues on average \cite{edelman1994many}, so one cannot ignore the eigenvalues on the real line. We also develop tail bounds with the correct $\epsilon^{k^2}$ and $\epsilon^{2k^2}$ scaling for the $k$th smallest singular values of real and complex shifts of $X$, for $k=O(n)$ when $\bM$ is a real Ginibre matrix and $k=O(\sqrt{n})$ for more general $\bM$.

A secondary contribution of the paper, which also plays a role in the proof above, is a polynomial (in $\gamma/n$) lower bound on the {\em minimum eigenvalue gap}:
$$\gap(X):=\min_{i\neq j}|\lambda_i-\lambda_j|$$
 which holds with high probability (Theorem \ref{thm:gaps}). The novelty of this result in comparison to existing minimum gap bounds (such as \cite{ge2017eigenvalue, luh2020eigenvectors}) is that it works for {\em heterogeneous non-centered} random matrices $X$, as opposed to only matrices with i.i.d. entries. This noncenteredness is crucial to applications in numerical linear algebra, where a random perturbation is used to regularize the eigenvalue gaps of an arbitrary input matrix as in \cite{banks2019pseudospectral}. The minimum gap proof relies on controlling the two smallest singular values of real and complex shifts of $X$, for which we employ Theorems \ref{thm:singvalscomplexshifts-intro} and \ref{prop:gaussiansingvalscomplex-intro}.
 
We now proceed with a formal statement of our results and detailed discussion of related work.

\begin{remark}[Concurrent and Independent Work] After completing this manuscript, we learned of the independent work \cite{jain20} which obtains results similar to ours regarding the eigenvector condition number and minimum eigenvalue gap. Their bound on $\kappa_V$ improves Theorem \ref{thm:kappai-probabilistic-intro} by a factor of $O(n/(\sqrt{\gamma}\log(n/\gamma)))$, thus almost matching the dependence on $\gamma$ in Davies' conjecture \cite{davies};  their bound on the minimum eigenvalue gap is also better than that supplied by Theorem \ref{thm:gaps} by a $\poly(n/\gamma)$ factor. They do not obtain specific control on the  $\kappa(\lambda_i)$ for real and complex $\lambda_i$ separately, and our bound for the sum of the real $\kappa(\lambda_i)$ in Theorem \ref{thm:kappai-probabilistic-intro} implies a bound for the maximum which is slightly better than their $\kappa_V$ bound alone.

The techniques used by both papers focus on deriving tail bounds for the least singular value with the correct scaling in $\eps$, but the proofs are essentially different. In particular, our proof relies on studying the entries of the resolvent, whereas theirs is more geometric. We obtain bounds on the $k$th smallest singular values of real and complex shifts (Theorems \ref{thm:skboundgeneral}--\ref{prop:gaussiansingvalscomplex-intro}) with the correct $\eps^{k^2}$ and $\eps^{2k^2}$ scaling, whereas they derive bounds for $k=1,2$, but with better dependence on $n$.

They do not take the limit as $\eps\rightarrow 0$ to derive $\kappa_V$ bounds, relying instead on a bootstrapping scheme, while we do.
\end{remark}

\subsection{Results and Organization}
\emph{Notation.} We use boldface to denote random quantities.  For any $n \in \mathbb{N}$, we use the shorthand $[n]$ to denote the set $\{1, 2, \dots, n\}$.  For a matrix $X$, we let $\Vert X \Vert$ denote its spectral norm. 

Throughout the paper, we will write $\bM_n$ for an $n\times n$ real random matrix satisfying the following assumption:
\begin{assumption}
    \label{assumption}
    The matrix $\bM_n$ has independent entries, each with density on $\R$ bounded almost everywhere by $\sqrt n K>0$. 
\end{assumption}

\noindent Equivalently, $\bM_n = n^{-1/2}\widehat{\bM}_n$ where $\widehat{\bM}_n$ has independent real entries with density bounded by $K$. We do not require that $\bM_n$ have mean zero, nor will we make any explicit moment assumptions on its entries. Instead, our results will often be stated in terms of the $L_p$ norm of its operator norm, which we denote by
\begin{equation}
    \label{eq:cml-def}
    B_{\bM_n,p} \defeq \E \left[\|\bM_n\|^p \right]^{1/p}.
\end{equation}

\begin{remark} Depending on the entry distributions of $\bM_n$, the $\sqrt n$ in Assumption \ref{assumption} need not be the appropriate normalization so that $\E\|\bM_n\| = O(1)$. However, this holds in the case when the entries of $\bM_n$ have bounded fourth moment, and we include this explicit scaling for easier comparison to the Gaussian case.\end{remark}

\begin{definition}
    We will write $\bG_n$ to denote a \textit{normalized real Ginibre matrix}. In other words, the entries of $\bG_n$ are independent real random variables, each distributed as $\calN(0,1/n)$.
\end{definition}
\noindent Of course, $\bG_n$ satisfies Assumption \ref{assumption} with $K = 1/\sqrt{2\pi}$.

We can now state our main theorems. We begin with the singular value tail bounds that are the probabilistic workhorse of the paper. Although we only use the bounds for the bottom two singular values corresponding to $k=1,2$, we state the bounds for general $k$ as we are able to obtain the optimal $\eps^{k^2}$ and $\eps^{2k^2}$ type dependence matching the centered real Ginibre and complex Ginibre cases (cf. Theorem \ref{thm:szarek}). We state each bound twice---first for matrices satisfying Assumption \ref{assumption}, and then specialized to real Ginibre  perturbations, for which we are able to obtain improvements by exploiting specific properties of Gaussians.

\begin{theorem}[Singular Values of $\bM_n$] \label{thm:skboundgeneral}
Let  $\bM_n \in \mathbb{R}^{n \times n}$ be a random matrix satisfying Assumption~\ref{assumption} with parameter $K>0$.  Then
$$\P\left[\sigma_{n-k+1}(\bM_n) \leq \eps \right]  \le \binom{n}{k} \left(\crv K \eps \sqrt{kn(n - k + 1)} \right)^{k^2} \le n^{k^2 + k} k^{\frac{1}{2} k^2}(\crv K)^{k^2} \eps^{k^2},$$
where $\crv$ is the universal constant appearing in Theorem \ref{thm:rvproj}, due to \cite{rudelson2015small}.
\end{theorem}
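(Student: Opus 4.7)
The plan is to show that the event $\sigma_{n-k+1}(\bM_n)\le\eps$ forces a near-linear dependence between some $k$ columns of $\bM_n$ and the remaining $n-k$ columns, and then to bound this event by conditioning on those $n-k$ columns and applying Theorem \ref{thm:rvproj} to the remaining $k$ columns. The $\binom{n}{k}$ prefactor will arise from a union bound over the choice of the distinguished $k$-subset $S$ of columns.

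By the Courant--Fischer characterization, $\sigma_{n-k+1}(\bM_n)\le\eps$ is equivalent to the existence of a $k$-dimensional subspace $W\subset\R^n$ with $\|\bM_n w\|\le\eps\|w\|$ for every $w\in W$. I would fix any orthonormal basis $U\in\R^{n\times k}$ of $W$ and choose $S\subset[n]$, $|S|=k$, to maximize $|\det U_S|$, where $U_S$ denotes the $k\times k$ submatrix of $U$ on rows indexed by $S$. Defining $w^{(1)},\dots,w^{(k)}\in W$ by $(w^{(i)})_{s_j}=\delta_{ij}$ for $s_j\in S$, the matrix $\widetilde W:=[w^{(1)}\mid\cdots\mid w^{(k)}]$ equals $UU_S^{-1}$ and has $I_k$ in its $S$-rows. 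A direct Cramer's-rule computation identifies $|\widetilde W_{j,i}|=|\det U_{S\cup\{j\}\setminus\{s_i\}}|/|\det U_S|$, which is at most $1$ by the maximality of $|\det U_S|$; hence $\|\widetilde W\|_F^2\le k+k(n-k)=k(n-k+1)$.

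The contraction on $W$ now implies $\|\bM_n\widetilde W\|_F\le\eps\sqrt{k(n-k+1)}$. Splitting columns, $\bM_n\widetilde W=M_S+M_{S^c}\widetilde W_{S^c}$, where $M_T$ denotes the submatrix of columns of $\bM_n$ indexed by $T$; the second summand lies in $H_S:=\range M_{S^c}$, so projecting onto $H_S^\perp$ annihilates it and leaves $\|P_{H_S^\perp}M_S\|_F\le\eps\sqrt{k(n-k+1)}$. Conditioning on $M_{S^c}$, the projection $P_{H_S^\perp}$ becomes a deterministic orthogonal projection of rank at least $k$, while the $nk$ entries of $M_S$ remain independent with density at most $K\sqrt n$. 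Applying Theorem \ref{thm:rvproj} to the vectorization $\mathrm{vec}(M_S)\in\R^{nk}$ with the rank-$(\ge k^2)$ orthogonal projection $I_k\otimes P_{H_S^\perp}$ then bounds the conditional probability of $\{\|P_{H_S^\perp}M_S\|_F\le\eps\sqrt{k(n-k+1)}\}$ by $\bigl(\crv K\eps\sqrt{kn(n-k+1)}\bigr)^{k^2}$. A union bound over the $\binom{n}{k}$ choices of $S$ delivers the first inequality of the theorem; the second follows from the crude estimates $\binom{n}{k}\le n^k$ and $(kn(n-k+1))^{k^2/2}\le k^{k^2/2}n^{k^2}$.

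The main obstacle is the max-volume subset selection and the resulting Frobenius bound $\|\widetilde W\|_F\le\sqrt{k(n-k+1)}$: this estimate is precisely what enables a single Rudelson--Vershynin application of rank $k^2$ (or, equivalently, $k$ independent column-wise applications), yielding the sharp $\eps^{k^2}$ exponent. The remaining steps---the Courant--Fischer reduction, conditioning on $M_{S^c}$, and the final union bound---are routine packaging of existing tools.
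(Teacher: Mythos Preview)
Your proof is correct and follows the same overall architecture as the paper's proof (which the paper attributes to Nguyen): use Courant--Fischer to produce a $k$-dimensional approximate null space, select a distinguished $k$-subset $S$ of coordinates, normalize so the $S$-block is $I_k$, project onto the orthogonal complement of the non-$S$ columns of $\bM_n$, and finish with Rudelson--Vershynin plus a union bound over $S$.

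The one genuine difference is in the subset selection step. The paper proves and invokes a tailored restricted-invertibility lemma (Lemma~\ref{lem:linearalgebralemma} / Corollary~\ref{cor:linearalgnonpsd}) to find a $k\times k$ submatrix $\bZ_1$ of the orthonormal basis $\bZ$ with $\|\bZ_1^{-1}\|\le\sqrt{k(n-k+1)}$, and then bounds $\|\bM_n\bZ\bZ_1^{-1}\|_F\le\|\bM_n\bZ\|_F\,\|\bZ_1^{-1}\|\le \eps k\sqrt{n-k+1}$. You instead use the classical max-volume selection: picking $S$ to maximize $|\det U_S|$ and applying Cramer's rule shows every entry of $\widetilde W=UU_S^{-1}$ has modulus at most $1$, so $\|\widetilde W\|_F\le\sqrt{k(n-k+1)}$ and $\|\bM_n\widetilde W\|_F\le\eps\sqrt{k(n-k+1)}$. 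Your route is more elementary---it avoids proving a separate restricted-invertibility estimate---and in fact yields an intermediate radius a factor $\sqrt{k}$ smaller than the paper's, though this gain washes out in the final stated bound. The paper's lemma, on the other hand, is reused elsewhere (in Section~\ref{sec:realcomplex}) where a max-volume argument would not directly apply, which is presumably why the authors chose to isolate it.
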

Note that Theorem \ref{thm:skboundgeneral} includes as a special case matrices of type $z-(A+\gamma \bM_n)$ for real $z$ and $A$, as such matrices themselves satisfy Assumption 1.

\begin{gtheorem}{\ref{thm:skboundgeneral}}[Singular Values of Real Shifts: Gaussian]  \label{thm:gaussianrealrealsingular}
    Let $z \in \R$ and $A \in \mathbb{R}^{n \times n}$ be deterministic, and let $\bG_n$ be a normalized Ginibre matrix. For every $\gamma > 0$,
    \[
        \P[\sigma_{n-k+1}(z - (A + \gamma \bG_n)) \le \eps] \le \left(\frac{\sqrt{2e} n \eps}{k \gamma} \right)^{k^2}.
    \]
    In the case $k = 1$, one has a better constant:
    \[ \P[\sigma_{n}(z - (A + \gamma \bG_n)) \le \eps] \le \frac{n \eps}{\gamma}.\]
\end{gtheorem}
The key improvement we obtain the case of nonreal complex $z$ is an extra factor of $2$ in the exponent.
\begin{theorem}[Singular Values of Complex Shifts]
\label{thm:singvalscomplexshifts-intro}
    Let $z\in \C\setminus \R$ and $A\in\R^{n\times n}$ be deterministic, and let $\bM_n$ satisfy Assumption \ref{assumption} with parameter $K>0$. For every $k \le \sqrt n - 2$,
    $$
        \P\left[\sigma_{n-k+1}\left(z - (A + \bM_n)\right) \le \eps \right] \le (1 + k^2){\binom{n}{k}}^2\left(C_{\ref{thm:singvalscomplexshifts-intro}} k^2(nK)^3\left(\left( B_{\bM_n,2k^2} + \|A\| + |\Re z|\right)^2 + |\Im z|^2 \right)\frac{\eps^2}{|\Im z|}\right)^{k^2},
    $$
    where $C_{\ref{thm:singvalscomplexshifts-intro}}$ is a universal constant defined in \eqref{eq:singvalscomplexshifts-const}.
\end{theorem}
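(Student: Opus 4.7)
The target exponent $\eps^{2k^2}$ is twice that of the real-shift case (Theorem~\ref{thm:skboundgeneral}), and this doubling cannot come from complexifying the randomness, since $\bM_n$ is real; instead, it must come from exploiting the complex structure of the shift. The key observation is that writing $Y := z - (A + \bM_n) = B + i (\Im z) I$ with $B := (\Re z) I - A - \bM_n$ real, a complex near-null vector $v = u_1 + i u_2 \in \C^n$ of $Y$ gives rise to \emph{two} independent real linear constraints on $\bM_n$,
\[
\|B u_1 - (\Im z)\, u_2\| \le \eps \qquad \text{and} \qquad \|B u_2 + (\Im z)\, u_1\| \le \eps,
\]
which, when $\Im z \ne 0$, furnish the second source of anti-concentration absent from the real-shift case.

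My plan is as follows. \emph{(i)} Recast $\sigma_{n-k+1}(Y)\le\eps$ as the existence of a $J$-invariant $2k$-dimensional real subspace of $\R^{2n}$ on which the realification $T(Y) = \begin{pmatrix} B & -(\Im z) I \\ (\Im z) I & B \end{pmatrix}$ has operator norm at most $\eps$---equivalently, a $k$-dimensional complex subspace of $\C^n$ on which $Y$ is $\eps$-small. \emph{(ii)} Parameterize such a subspace by choosing $k$ pivot indices for the real parts and $k$ pivot indices for the imaginary parts of an orthonormal basis, so that union-bounding over these choices produces the $\binom{n}{k}^2$ factor. \emph{(iii)} After conditioning on the non-pivot entries of $\bM_n$, the event becomes that $2k^2$ real affine functions of the remaining i.i.d.\ entries of $\bM_n$ all lie in a box in $\R^{2k^2}$ of side $O\bigl(\eps \sqrt{(\|A\| + |\Re z| + \|\bM_n\|)^2 + |\Im z|^2}\bigr)$. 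Apply Theorem~\ref{thm:rvproj} to bound this probability by $(\crv \sqrt n K)^{2k^2}$ times the ratio of the box's volume to the Gram determinant of the $2k$ test vectors: the volume contributes the critical $\eps^{2k^2}$, and the determinant is bounded below by a factor of order $|\Im z|^{k^2}$ coming from the off-diagonal $\pm(\Im z) I$ block of $T(Y)$, producing the $1/|\Im z|^{k^2}$ scaling. \emph{(iv)} Replace $\|\bM_n\|$ by $B_{\bM_n,2k^2}$ via Markov at the $2k^2$-th moment (this is where the $1+k^2$ prefactor arises) and collect the remaining $n$- and $k$-dependent constants.

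The principal obstacle is the Gram determinant lower bound in step (iii): one must show quantitatively that the $2k$ test vectors appearing in the Rudelson--Vershynin application---the real and imaginary parts of a basis of the $k$-dimensional complex subspace, viewed as vectors in $\R^{2n}$---are linearly independent with the level of independence controlled by $|\Im z|$. This is the precise mechanism by which a nonzero imaginary shift is converted into an extra factor of $\eps^{k^2}$: if $\Im z = 0$, the two halves of the test-vector system coincide, the Gram determinant degenerates, and only the real-shift exponent $\eps^{k^2}$ of Theorem~\ref{thm:skboundgeneral} survives. The restriction $k \le \sqrt n - 2$ enters through a bound ensuring that the conditioned pivot minor is invertible with sufficient probability.
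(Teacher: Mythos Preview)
Your proposal differs fundamentally from the paper's route, and the gap you flag as ``the principal obstacle'' is in fact not the central one.

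\textbf{What the paper does.} The $\binom{n}{k}^2$ does not arise from pivoting on real and imaginary parts of a near-null basis; it comes from applying the restricted-invertibility Corollary~\ref{cor:linearalgnonpsd} to the \emph{resolvent} $(z-(A+\bM_n))^{-1}$, reducing to a tail bound on $\sigma_k(\bN_k)$ where $\bN_k$ is an arbitrary $k\times k$ submatrix of the inverse. Via Schur complement, $\bN_k^{-1}$ decomposes as in \eqref{eq:realpart}--\eqref{eq:imagpart}, and the crux is Observation~\ref{obs:mainobs}: $\Im\bN_k^{-1}$ is \emph{independent} of $\bM_{11}$, so conditioning on the remaining blocks makes $\Re\bN_k^{-1}=\bM_{11}+(\text{fixed})$ an affine function of $k^2$ fresh i.i.d.\ entries, giving one factor of $\eps^{k^2}$. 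The second factor of $\eps^{k^2}$ comes from $\P[\|\Im\bN_k^{-1}\|_F\le\eps\sqrt k]$, where $\Im\bN_k^{-1}$ is a \emph{quadratic} form in $\bM_{12},\bM_{21}$ (conditional on $\bM_{22}$), handled by the bespoke Theorem~\ref{thm:anticoncentration}. Both the $(1+k^2)$ prefactor and the constraint $k\le\sqrt n-2$ (equivalently $n\ge(k+2)^2$) come from that quadratic anticoncentration theorem, and the $|\Im z|^{-k^2}$ comes from the singular-value lower bound \eqref{eq:sigmajY-ub} on $\bY=\Im(\bM_{22}+iU_{22})^{-1}$. The $(nK)^3$ reflects $K^{k^2}$ from $\bM_{11}$ together with $K^{2k^2}$ from the bilinear form---three applications of density bounds, not two.

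\textbf{Where your approach breaks.} Writing $W=[U_1,U_2]$ for the realified near-null basis, the constraint is $BW-(\Im z)WJ_k=E$, $\|E\|_F\le\sqrt{2k}\,\eps$. After pivoting on an index set $S$ and projecting by $\bH$ onto the orthogonal complement of the non-pivot columns of $\bM_n$, you obtain
\[
\bH(\bM_n)_S \;=\; \bH C W W_S^{-1} \;-\; (\Im z)\,\bH W J_k W_S^{-1} \;-\; \bH E W_S^{-1}.
\]
In the real-shift proof (Theorem~\ref{thm:skboundgeneral}) the right side is $-\bH E W_S^{-1}$, deterministically small, so one gets a genuine small-ball event in the pivot entries. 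Here the terms $\bH C W W_S^{-1}$ and $(\Im z)\bH W J_k W_S^{-1}$ depend on $W$, which depends on \emph{all} of $\bM_n$ including the pivot columns. So after conditioning on $(\bM_n)_{\bar S}$, the event is \emph{not} that $2k^2$ affine functions of the remaining entries lie in a fixed box: the target moves with the random pivot entries. Your step~(iii) therefore does not hold as stated, and this coupling is the real obstacle, prior to any Gram-determinant estimate. The paper sidesteps it entirely by passing to the inverse, where the Schur-complement formula produces the clean conditional independence of Observation~\ref{obs:mainobs}. Your attributions of the $(1+k^2)$ factor (to a Markov moment argument) and the $k\le\sqrt n-2$ restriction (to pivot-minor invertibility) are also incorrect: both originate in the quadratic anticoncentration Theorem~\ref{thm:anticoncentration}.
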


\begin{gtheorem}{\ref{thm:singvalscomplexshifts-intro}}[Singular Values of Complex Shifts: Gaussian]
\label{prop:gaussiansingvalscomplex-intro}
    Let $z\in\C \setminus \R$ and $A \in \R^{n\times n}$ be deterministic, and let $\bG_n$ be a normalized $n\times n$ real Ginibre matrix. For every $\gamma > 0$, and every $k \le n/7$, 
    $$
        \P\left[\sigma_{n-k+1}(z - (A + \gamma \bG_n)) \le \eps \right] \le {\binom{n}{k}}^2 \left(\frac{\sqrt{7e} k^2 n^3}{2\gamma^3} \left(\left(9\gamma  + \|A\| + |\Re z|\right)^2 + |\Im z|^2\right)\frac{\eps^2}{|\Im z|}\right)^{k^2}.
    $$
\end{gtheorem}

The proofs of Theorems \ref{thm:skboundgeneral}--\ref{prop:gaussiansingvalscomplex-intro} appear in Sections \ref{sec:realreal} and \ref{sec:realcomplex}, and rely on anticoncentration bounds for quadratic polynomials in independent, absolutely continuous random variables as well as Gaussians, which may be of independent interest and are developed in Section \ref{sec:anticoncentration}.

Using the above theorems to control the bottom two singular values of real and complex shifts of $X$, and employing simple net arguments in the complex plane, we obtain the following minimum gap bounds in Section \ref{sec:gaps}.
\begin{theorem}[Minimum Eigenvalue Gap]     \label{thm:gaps}
    Let $n\geq 16$, $A \in \R^{n\times n}$ be deterministic, and $\bM_n$ be a random matrix satisfying Assumption \ref{assumption} with parameter $K>0$. For any $0 < \gamma < K$ and $R> 1$:
    \begin{equation} \label{eqn:mingaptheorem}
        \P\left[\gap(A + \gamma \bM_n) \le s \right] \le C_{\ref{thm:gaps}} R^2\left(\gamma B_{\bM_n,8} + \|A\| + R\right) (K/\gamma)^{5/2} n^4 s^{1/3} + \P\left[\|A + \bM_n\| \ge R\right],
    \end{equation}
    where $C_{\ref{thm:gaps}}$ is a universal constant defined in equation \eqref{eq:gaps-const-def}. Moreover, if $\bG_n$ is an $n\times n$ real Ginibre and $0 < \gamma < 1$ then
    \begin{equation} \label{eqn:gaussianmingaptheorem}
        \P\left[\gap(A + \gamma \bG_n) \le s \right] \le  15 \left(\|A\| + 7\right)^3 n^3\gamma^{-5/2}s^{1/3} + e^{-2n}.
    \end{equation}
\end{theorem}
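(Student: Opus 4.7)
The plan is a net argument in $\mathbb{C}$ that converts a small eigenvalue gap at $X = A + \gamma \bM_n$ into a joint upper bound on the two smallest singular values of $z - X$ near the midpoint of the close eigenvalue pair, controlled via Theorems \ref{thm:skboundgeneral} and \ref{thm:singvalscomplexshifts-intro} (applied with $k = 1, 2$) and their Gaussian refinements.

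I first condition on $\|X\| \le R$, losing the additive $\P[\|A + \bM_n\| \ge R]$ term in \eqref{eqn:mingaptheorem} and confining $\spec(X)$ to the closed disk of radius $R$. For the Gaussian bound \eqref{eqn:gaussianmingaptheorem} I take $R = \|A\| + 7\gamma$ and use the standard concentration $\P[\|\bG_n\| \ge 3] \le 2e^{-2n}$. If $\gap(X) \le s$, then two distinct eigenvalues $\lambda_i, \lambda_j$ of $X$ satisfy $|\lambda_i - \lambda_j| \le s$; their midpoint $m = (\lambda_i + \lambda_j)/2$ lies in the disk, and by Weyl's inequality applied to the two smallest-magnitude eigenvalues of $m - X$,
\[
    \sigma_n(m - X)\, \sigma_{n-1}(m - X) \le |m - \lambda_i|\,|m - \lambda_j| \le s^2/4.
\]
Crucially, $m \in \R$ exactly when $\lambda_i, \lambda_j$ are both real or form a complex conjugate pair, and $m \notin \R$ otherwise; this dichotomy forces simultaneous use of the real-shift bound of Theorem \ref{thm:skboundgeneral} and the complex-shift bound of Theorem \ref{thm:singvalscomplexshifts-intro}.

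Next I cover $\{|z| \le R\}$ by a net $\mathcal{N}$ of spacing $\delta$, partitioning the complex portion dyadically by $|\Im z|$. The midpoint $m$ lies within $\delta$ of some $z_0 \in \mathcal{N}$ of matching real/complex type, and Weyl transfers the product bound to $\sigma_n(z_0 - X)\sigma_{n-1}(z_0 - X) \le (s/2 + \delta)^2$. For any split parameter $\eta > 0$, at least one of $\sigma_n(z_0 - X) \le \eta + \delta$ or $\sigma_{n-1}(z_0 - X) \le s^2/(4\eta) + \delta$ must then hold. Union-bounding this event over $\mathcal{N}$ using Theorems \ref{thm:skboundgeneral} and \ref{thm:singvalscomplexshifts-intro} with $k = 1, 2$ (with dyadic summation in $|\Im z|$ on the complex side), and then optimizing $\delta$ and $\eta$ as appropriate fractional powers of $s$, yields the $s^{1/3}$ rate. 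The Gaussian case \eqref{eqn:gaussianmingaptheorem} proceeds identically, substituting the sharper bounds of Theorems \ref{thm:gaussianrealrealsingular} and \ref{prop:gaussiansingvalscomplex-intro}.

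The principal obstacle is that the final bound must actually vanish with $s$, rather than plateauing at a constant. The real-axis portion of $\mathcal{N}$ has $\Theta(R/\delta)$ points and Theorem \ref{thm:skboundgeneral} at $k = 1$ gives only $O(\eps)$ scaling, so using the $\sigma_n$ event alone produces a $\Theta(R n^2 K)$ term that is independent of $s$. The cure is the joint Weyl constraint $\sigma_n \sigma_{n-1} \le s^2/4$: the $\eta$-threshold routes part of the probability through the $\sigma_{n-1}$ event, which enjoys $O(\eps^4)$ (real) or $O(\eps^8/|\Im z|^4)$ (complex) scaling, and optimizing $\eta$ against $\delta$ recovers the $s^{1/3}$ exponent. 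A secondary technicality is tracking the $B_{\bM_n, 8}$ factor in \eqref{eqn:mingaptheorem}, which is inherited directly from the $B_{\bM_n, 2k^2}$ term in Theorem \ref{thm:singvalscomplexshifts-intro} at $k = 2$, combined with the $|\Re z|, \|A\| \le R$ bounds.
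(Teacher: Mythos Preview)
Your plan is essentially the paper's: reduce a small gap to two eigenvalues in a small disk, convert via Weyl/log-majorization to $\sigma_n\sigma_{n-1}\le r^2$ at a nearby net point, split the product by a threshold, and feed in Theorems \ref{thm:skboundgeneral} and \ref{thm:singvalscomplexshifts-intro} (and their Gaussian versions) at $k=1,2$. The one substantive difference is how you treat complex midpoints with small imaginary part.

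You insist on rounding $m$ to a net point \emph{of matching type}, so a complex midpoint near the real line must be handled by the complex-shift bound with its $|\Im z|^{-1}$ factor. In your dyadic sum the lowest level $|\Im z|\approx\delta$ then dominates and carries the full $n$-power coming from Theorem \ref{thm:singvalscomplexshifts-intro} at $k=2$ (in the paper's arithmetic, $n^{34/5}$). With $\delta\approx s$ this gives a bound of order $n^{34/5}s^{3/5}$: a better $s$-exponent than the statement, but a worse $n$-exponent, so it does not imply \eqref{eqn:mingaptheorem} as written.

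The paper avoids this by a three-way split rather than your two-way one. It introduces a separate cutoff parameter $\delta$ (decoupled from the net spacing, which is taken $\approx s$) and isolates the event $\Im_{\min}(X)\le\delta$: if any complex eigenvalue has $|\Im\lambda|\le\delta$, then $\lambda$ and $\bar\lambda$ are two eigenvalues in a disk of radius $\sqrt 2\,\delta$ centered on the \emph{real} axis, so this event is controlled by the real-shift bound of Theorem \ref{thm:skboundgeneral} (with only the $n^{14/5}$ factor) rather than the deteriorating complex bound. The complex net is then restricted to $|\Im z|\ge\delta$ with $|\Im z|^{-1}\le\delta^{-1}$ uniformly, no dyadic summation needed. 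Optimizing $\delta$ between the $\Im_{\min}$ term ($\propto n^{14/5}\delta^{3/5}$) and the complex-net term ($\propto n^{34/5}\delta^{-8/5}s^{6/5}$) yields $n^{214/55}s^{18/55}$, which is then relaxed to the stated $n^4 s^{1/3}$. If you drop the ``matching type'' constraint and allow complex midpoints with $|\Im m|<\delta$ to round to a real net point, you recover exactly this mechanism.
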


Finally, by combining all of the above theorems and carrying out the pseudospectral area approach outlined in the introduction, we obtain the advertised results regarding eigenvalue condition numbers and eigenvector condition numbers.
Recall that the eigenvector condition number of a matrix is defined as:
$$
    \kappa_V(X) \defeq \inf \left\{\|V\|\|V^{-1}\| : X = VDV^{-1} \text{ for diagonal }D \right\},
$$
with $\kappa_V(X)\defeq\infty$ if $X$ is not diagonalizable.
In the following theorem, a typical setting has $\Vert A \Vert, \Vert M_n \Vert, K, $ and $R$ all of order $\Theta(1)$, so one may obtain upper bounds of order $\poly(n, 1/\gamma)$ with high probability by setting $\eps_1, \eps_2$ appropriately.
\begin{theorem}[Eigenvalue and Eigenvector Condition Numbers]\label{thm:kappai-probabilistic-intro}
Let $n \ge 9$.  Let $A \in \R^{n\times n}$ be deterministic, and let $\bM_n$ satisfy Assumption \ref{assumption} with parameter $K>0$. Let $0 < \gamma < K \min\{1, \Vert A \Vert + R\}$, and write $\blambda_1,...,\blambda_n$ for the eigenvalues of $A + \gamma \bM_n$.  Let $R > \E \Vert \gamma \bM_n \Vert$.  Then for any $\eps_1, \eps_2 > 0$, with probability at least $$1 - 2\eps_1 -  O\left(\frac{R(R + \Vert A \Vert)^{3/5}K^{8/5}n^{14/5} \eps_2^{3/5}}{\gamma^{8/5}}\right) - 2 \P[\gamma \Vert \bM_n \Vert > R],$$ we have

    \[ 
        \sum_{\blambda_i \in \R} \kappa(\blambda_i) \le \eps_1^{-1} C_{\ref{thm:kappai-probabilistic-intro}}  K n^2 \frac{\Vert A \Vert + R}{\gamma}, 
    \]
    \[ 
        \sum_{\blambda_i \in \C \setminus \R} \kappa(\blambda_i)^2 \le \eps_1^{-1} \log(1/\eps_2) C_{\ref{thm:kappai-probabilistic-intro}}  K^3 n^{5} \cdot \frac{(\Vert A \Vert + R)^3}{\gamma^3},\qquad\text{and}
    \]
    \[
        \kappa_V(A + \gamma \bM_n) \le \eps_1^{-1} \sqrt{\log(1/\eps_2)} C_{\ref{thm:kappai-probabilistic-intro}}  K^{3/2} n^3 \cdot \frac{(\Vert A \Vert + R)^{3/2}}{\gamma^{3/2}},
    \]
\end{theorem}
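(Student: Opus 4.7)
The plan is to deduce the three bounds from the pseudospectral area/length formulas \eqref{eqn:introrealformula}--\eqref{eqn:introcomplexformula} by controlling the expected Lebesgue measures of $\Lambda_\eps(X) \cap \R$ and $\Lambda_\eps(X) \cap \{|\Im z| \ge \eps_2\}$ via Fubini/Tonelli, and then passing to the $\eps\to 0$ limit using Markov's inequality and Fatou's lemma. I would work throughout on the event $\{\gamma\|\bM_n\|\le R\}$, which contributes the $\P[\gamma\|\bM_n\|>R]$ terms to the failure probability and localizes the pseudospectrum in $\{|z|\le\|A\|+R+1\}$ once $\eps$ is small. The matrix $(z-A)/\gamma - \bM_n$ satisfies Assumption~\ref{assumption} with the same parameter $K$, so Theorems~\ref{thm:skboundgeneral} and \ref{thm:singvalscomplexshifts-intro} apply directly to $\sigma_{n-k+1}(z-(A+\gamma\bM_n))$ after inserting a factor of $\gamma$ per power of $\eps$.

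For the real eigenvalue sum, Fubini combined with Theorem~\ref{thm:skboundgeneral} at $k=1$ yields
$$\E\bigl[\leb_\R(\Lambda_\eps(X)\cap\R)\,\1\{\gamma\|\bM_n\|\le R\}\bigr] \le \int_{-(\|A\|+R+1)}^{\|A\|+R+1}\P\bigl[\sigma_n(z-X)\le\eps\bigr]\,dz = O\bigl(\eps K n^2(\|A\|+R)/\gamma\bigr).$$
Dividing by $\eps$, taking $\liminf$ as $\eps\to 0$, applying Fatou, and invoking Markov at level $\eps_1$ bounds $2\sum_{\blambda_i\in\R}\kappa(\blambda_i)$ via \eqref{eqn:introrealformula} on an event of probability at least $1-\eps_1-\P[\gamma\|\bM_n\|>R]$.

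For the complex eigenvalue sum, the $O(\eps^2/|\Im z|)$ tail of Theorem~\ref{thm:singvalscomplexshifts-intro} is non-integrable across $\R$, so I would restrict the integration to the region $\{|\Im z|\ge\eps_2\}$. Fubini then gives
$$\E\bigl[\leb_\C(\Lambda_\eps(X)\cap\{|\Im z|\ge\eps_2\})\,\1\{\gamma\|\bM_n\|\le R\}\bigr] = O\bigl(\eps^2 K^3 n^5(\|A\|+R)^3\log(1/\eps_2)/\gamma^3\bigr),$$
with the logarithm arising from $\int_{\eps_2}^{O(1)}dy/y$. To discard the omitted strip I would establish that with probability at least $1-O\!\left(R(\|A\|+R)^{3/5}K^{8/5}n^{14/5}\eps_2^{3/5}/\gamma^{8/5}\right)$, the matrix $X$ has no eigenvalue with imaginary part in $(-\eps_2,0)\cup(0,\eps_2)$. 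On this event, pseudospectral disks around real eigenvalues stay inside $\{|\Im z|<\eps_2\}$ for $\eps$ sufficiently small, so $\lim_{\eps\to 0}\leb_\C(\Lambda_\eps(X)\cap\{|\Im z|\ge\eps_2\})/\eps^2 = \pi\sum_{\blambda_i\in\C\setminus\R}\kappa(\blambda_i)^2$, and Markov $+$ Fatou again deliver the claim.

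Finally, the $\kappa_V$ bound follows from the elementary inequality $\kappa_V(X)\le\|V\|_F\|V^{-1}\|_F\le\sum_i\kappa(\blambda_i)$, obtained by rescaling the columns of $V$ so that $\|c_iv_i\|=\|w_i/c_i\|$ for all $i$; Cauchy--Schwarz then gives $\sum_i\kappa(\blambda_i)\le\sqrt{n\sum_i\kappa(\blambda_i)^2}$, and plugging in the first two bounds---with $\sum_{\blambda_i\in\R}\kappa(\blambda_i)^2\le(\sum_{\blambda_i\in\R}\kappa(\blambda_i))^2$ absorbed into the dominant complex term---yields the stated estimate. The principal obstacle is establishing the near-real-axis eigenvalue exclusion with the correct $\eps_2^{3/5}$ probability: a naive net argument on $\R$ using the $\eps^1$ tail for $\sigma_n$ loses a full power of $\eps_2$ to the net cardinality and is too weak, so one must instead combine a net in the plane with the \emph{second}-smallest singular value tails (the $k=2$ cases of Theorems~\ref{thm:skboundgeneral} and \ref{thm:singvalscomplexshifts-intro}, which enjoy the improved $\eps^4$ and $\eps^8$ scaling), exploiting the observation underlying the proof of Theorem~\ref{thm:gaps} that a conjugate pair of eigenvalues within distance $\eps_2$ of $\R$ forces both $\sigma_n$ and $\sigma_{n-1}$ of the corresponding shift to be simultaneously small, and then optimizing the net spacing to produce the quoted $3/5$ exponent.
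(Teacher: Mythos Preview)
Your proposal is correct and follows essentially the same route as the paper: the bounds in expectation are obtained via the pseudospectral area/length formulas, Fubini/Tonelli, and Fatou (these are packaged in the paper as Propositions~\ref{thm:realkappaibound} and~\ref{prop:nonrealkappaibound}), then Markov at level $\eps_1$ converts them to high-probability bounds, and the strip $\{0<|\Im z|<\eps_2\}$ is handled by the separate $\Im_{\min}$ tail bound \eqref{eq:im-min-standalone}. The $\kappa_V$ step you outline is exactly the paper's.

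One small correction on the $\Im_{\min}$ step: the net you need is on $\R$, not in the plane, and only the \emph{real}-shift tail bounds (Theorem~\ref{thm:skboundgeneral} at $k=1$ and $k=2$) enter---Theorem~\ref{thm:singvalscomplexshifts-intro} is not used here. The point is that a nonreal eigenvalue $\lambda$ with $|\Im\lambda|<\eps_2$ comes with its conjugate $\bar\lambda$, and both lie in a disk of radius $\sqrt{2}\,\eps_2$ centered at some \emph{real} net point; Lemma~\ref{lem:gaps} then forces $\sigma_n\sigma_{n-1}\le 2\eps_2^2$ for that real shift. Optimizing $\P[\sigma_n\le rx]+\P[\sigma_{n-1}\le r/x]$ with the $\eps^1$ and $\eps^4$ tails yields the $r^{8/5}$ per-disk bound, and dividing by the net spacing $\eps_2$ gives the advertised $\eps_2^{3/5}$. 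A planar net with complex shifts would introduce the $1/|\Im z|$ factor from Theorem~\ref{thm:singvalscomplexshifts-intro} and fail.
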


\begin{gtheorem}{\ref{thm:kappai-probabilistic-intro}}[Eigenvalue and Eigenvector Condition Numbers: Gaussian]
\label{thm:gaussiankappai-probabilistic-intro}

Let $n \ge 7$.  Let $A \in \R^{n\times n}$ be deterministic, and let $\bG_n$ be a real Ginibre matrix. Let $0 < \gamma < \min\{1, \Vert A \Vert\}$, and write $\blambda_1,...,\blambda_n$ for the eigenvalues of $A + \gamma \bG_n$.  Then for any $\eps_1, \eps_2 > 0$, with probability at least $1 - 2 \eps_1 -  \frac{30 \|A\|^{8/5} n^{8/5}}{\gamma^{8/5}}\eps_2^{3/5} - 2 e^{-2n}$ we have
 \[ 
        \sum_{\blambda_i \in \R}  \kappa(\blambda_i) \le 5 \eps_1^{-1} n \frac{\Vert A \Vert}{\gamma},
    \]
    \[
        \sum_{\blambda_i \in \C \setminus \R} \kappa(\blambda_i)^2 \le
    1000  \eps_1^{-1} \log(1/\eps_2)\frac{n^5  \Vert A \Vert^3}{\gamma^3},\qquad\text{and}
    \]
    \[
        \kappa_V(A + \gamma \bM_n) \le  1000 \eps_1^{-1}\sqrt{\log(1/\eps_2)}\frac{n^3 \Vert A \Vert^{3/2}}{\gamma^{3/2}}. 
    \]

\end{gtheorem}
\noindent By assuming a smaller upper bound on $\gamma$, one can make order of magnitude improvements in the constants, so we have made no effort to optimize them.

\begin{remark}[Moments of Overlaps] It is known (see \cite{fyodorov2018statistics} and the discussion following Remark 2.2 there) that for the real Ginibre ensemble the expected sum of the real overlaps
$$\E \sum_{\blambda_i\in\R}\mathscr{O}_{ii} = \E \sum_{\blambda_i\in \R} \kappa(\blambda_i)^2$$
is not finite. Our proof entirely avoids this divergence by working with the $\kappa(\blambda_i)$ instead of their squares. This also indicates that one should not hope to improve the power of $\eps_1$ in the first equation above to better than $-{1/2}$. 
\end{remark}
The proofs of Theorems \ref{thm:kappai-probabilistic-intro} and \ref{thm:gaussiankappai-probabilistic-intro} appear in Section \ref{sec:regularization}. We conclude with a discussion of open questions in Section \ref{sec:conclusion}.
\subsection{Related Work}

\paragraph{Eigenvalue Condition Numbers and Overlaps.} For complex Ginibre matrices, much is known about diagonal overlaps ($= \kappa(\blambda_i)^2$) and off-diagonal overlaps.  In the seminal work of Chalker and Mehlig \cite{chalker1998eigenvector} explicit formulas were given for the limiting expected overlaps as $n \to \infty$, conditioned on the locations of the participating eigenvalues.  Since then there has been significant progress; here we mention a few recent milestones.  In \cite{bourgade2018distribution}, a formula for the limiting distribution of the diagonal overlaps was proved, as well as asymptotic formulas for the expected value of all overlaps, and for correlations between overlaps.  Using a different approach, in \cite{fyodorov2018statistics}, an explicit nonasymptotic formula for the joint density of an eigenvalue and its diagonal overlap was proved.  

For the real Ginibre ensemble, results are more limited.  The same paper \cite{fyodorov2018statistics} gives an analogous joint density formula for real Ginibre matrices, but only for real eigenvalues.\footnote{Fyodorov \cite{fyodorov2018statistics} writes: "The approach suggested in the present paper can be certainly adjusted for addressing
overlaps of left/right eigenvectors corresponding to complex eigenvalues of the real Ginibre
ensemble, although in this way one encounters a few challenging technical problems not
yet fully resolved."}  Compared to a joint density formula, our Theorem \ref{thm:kappai-probabilistic-intro} (a polynomial upper bound with high probability) is rather coarse, but our theorem holds for general continuous matrices.  Besides our result, we are not aware of any results in the literature regarding diagonal overlaps for nonreal eigenvalues of the real Ginibre ensemble, or any other non-Hermitian random matrix model with real entries.

\paragraph{Eigenvector Condition Numbers and Numerical Analysis.} In 2007, Davies proposed a method for accurately computing analytic functions of matrices, $f(A)$, on machines with finite-precision arithmetic.  His insight was that adding a small independent complex Gaussian to each entry of a matrix $A$ improves the conditioning of its eigenvectors significantly, so that the approach of computing $f(A) \approx V\,f(D)\,V^{-1}$ becomes numerically stable. The quantitative relationship between the size of the Gaussian perturbation and resulting $\kappa_V$ formed the core conjecture of Davies' paper, and was confirmed by some of the authors and Mukherjee in the following theorems.

\begin{theorem}[{\cite[Theorem 1.5]{banks2019gaussian}}] \label{thm:davies} 
	Suppose $A\in\C^{n\times n}$ with $\|A\|\le 1$ and $\delta\in (0,1)$. Let $\bG_n$ be a complex Ginibre matrix, and let $\lambda_1,\ldots,\lambda_n\in \C$ be the (random) eigenvalues of $A+\delta \bG_n$.  Let $\vol_\C$ denote the Lebesgue measure on $\mathbb{C}$, then for every measurable open set $B\subset \C,$
	$$ 
		\dE \sum_{\lambda_i \in B} \kappa(\lambda_i)^2 \le \frac{n^2}{\pi \delta^2}\vol_\C(B).
	$$
\end{theorem}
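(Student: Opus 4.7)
The plan is to carry out the pseudospectral area strategy outlined in the introduction, which is simpler in this complex-matrix setting than in the real case treated in the main body: since $A+\delta\bG_n$ has an absolutely continuous distribution on $\C^{n\times n}$, its eigenvalues are almost surely all simple and lie in $\C\setminus\R$, so every eigenvalue contributes through the squared formula \eqref{eqn:introcomplexformula} and no separate handling of the real line is needed.

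First I would prove the deterministic pointwise identity
\[
    \pi\sum_{\lambda_i\in B}\kappa(\lambda_i)^2 \;=\; \lim_{\eps\to 0}\frac{\vol_\C\bigl(\Lambda_\eps(X)\cap B\bigr)}{\eps^2}
\]
for any $X$ with simple eigenvalues and any open $B$ whose boundary avoids the spectrum. This follows from the spectral expansion $(z-X)^{-1}=\sum_i v_i w_i^*/(z-\lambda_i)$: near a simple eigenvalue $\lambda_i$ the resolvent is dominated by its pole, so $\|(z-X)^{-1}\|\sim\kappa(\lambda_i)/|z-\lambda_i|$ and the sublevel set $\{\sigma_n(z-X)\le\eps\}$ is, to leading order near $\lambda_i$, the disk of radius $\eps\kappa(\lambda_i)$, contributing area $\pi\eps^2\kappa(\lambda_i)^2$.

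Taking expectations, applying Fatou to the above identity, and using Fubini to exchange expectation with the integral over $B$ gives
\[
    \pi\,\E\sum_{\lambda_i\in B}\kappa(\lambda_i)^2 \;\le\; \liminf_{\eps\to 0}\frac{1}{\eps^2}\int_B\P\bigl[\sigma_n(z-(A+\delta\bG_n))\le\eps\bigr]\,dm(z).
\]
It therefore suffices to prove the pointwise tail bound
\[
    \P\bigl[\sigma_n(M+\delta\bG_n)\le\eps\bigr] \;\le\; \frac{n^2\eps^2}{\delta^2}\qquad\text{for every deterministic }M\in\C^{n\times n},
\]
since plugging this into the integral yields exactly $n^2\vol_\C(B)/(\pi\delta^2)$ after dividing by $\pi$. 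This is the complex analogue of the $k=1$ Gaussian case of Theorem~\ref{thm:skboundgeneral}, the key improvement being $\eps^2$ rather than $\eps$, reflecting the two-dimensional anticoncentration of complex Gaussians.

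The hard part will be the singular value bound with the sharp $n^2$ scaling. The natural approach is to condition on $n-1$ columns of $\bG_n$, so that the projection of the remaining column onto the $1$-dimensional orthogonal complement of their span is a shifted complex Gaussian $\mathcal{CN}(c,\delta^2/n)$ scalar whose modulus has density on $\C$ bounded by $n/(\pi\delta^2)$. This yields $\P[\dist_j\le t]\le nt^2/\delta^2$, where $\dist_j$ denotes the distance from the $j$-th column to the span of the others. Combining with the crude inequality $\sigma_n\ge\min_j\dist_j/\sqrt n$ plus a union bound over the $n$ columns loses an additional factor of $n$, producing only $n^3\eps^2/\delta^2$; to recover the sharp $n^2$ one must work harder, for example via a Sankar--Spielman--Teng-type argument that bounds the density of $\sigma_n$ directly rather than through column distances, or via a Rudelson--Vershynin compressible/incompressible decomposition that sharpens $\sigma_n\ge\min_j\dist_j/\sqrt n$ to $\sigma_n\ge c\min_j\dist_j$ on a high-probability event. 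With the tail bound in hand the remaining steps combine immediately.
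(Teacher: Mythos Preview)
This theorem is quoted from \cite{banks2019gaussian} and is not proved in the present paper, so there is no proof here to compare against directly. That said, the present paper carries out exactly your outlined strategy (limiting-area lemma, Fatou, Fubini, then a pointwise singular value tail bound) when proving its own analogues, Propositions~\ref{thm:realkappaibound} and~\ref{prop:nonrealkappaibound}; your high-level plan is correct and matches the paper's method.

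Where you diverge is in the route to the sharp singular value bound. The column-distance argument you sketch does lose a factor of $n$, but the fixes you propose (a Sankar--Spielman--Teng density argument or a compressible/incompressible decomposition) are heavier than what is actually used. The clean route---employed throughout this paper for its Gaussian results, e.g.\ Theorem~\ref{thm:gaussianrealrealsingular}G and equation~\eqref{eq:ginibre-real-sv}---is \'Sniady's comparison theorem: the complex version (which is the original theorem of \cite{sniady2002random}, cf.\ Theorem~\ref{thm:sniadyreal} and Corollary~\ref{cor:sniady-dominance}) shows that $\sigma_n(M+\delta\bG_n)$ stochastically dominates $\sigma_n(\delta\bG_n)$ for any deterministic $M$, reducing the problem to the centered complex Ginibre ensemble, where the sharp $O((n\eps/\delta)^2)$ bound is classical (the complex $k=1$ case of Szarek's Theorem~\ref{thm:szarek}, or Edelman's exact distribution). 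This bypasses any net or decomposition argument entirely.
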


\begin{theorem}[{\cite[Theorem 1.1]{banks2019gaussian}}]\label{thm:a} 
	Suppose $A\in\C^{n\times n}$ and $\delta\in (0,1)$. Then there is a matrix $E\in\C^{n\times n}$ such that $\|E\|\le \delta\|A\|$ and
	$$ 
		\kappa_V(A+E)\le 4n^{3/2}\left(1+\frac1\delta\right).
	$$
\end{theorem}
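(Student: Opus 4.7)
The plan is the probabilistic method: take $\bE = \gamma \bG_n$ for a complex Ginibre matrix $\bG_n$ and a well-chosen $\gamma>0$, then show that with strictly positive probability both $\|\bE\|\le\delta\|A\|$ and $\kappa_V(A+\bE)\le 4n^{3/2}(1+1/\delta)$ hold simultaneously. By homogeneity in $A$ we may assume $\|A\|=1$; the case $A=0$ is trivial.

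The bridge from Theorem~\ref{thm:davies} to an eigenvector condition number bound is the elementary inequality
\[
\kappa_V(X)^2 \;\le\; n \sum_{i=1}^{n}\kappa(\lambda_i)^2,
\]
obtained by choosing $V$ with unit-norm columns $v_i$ so that $\|V\|_F^2 = n$ and $\|V^{-1}\|_F^2 = \sum_i \|w_i\|^2 = \sum_i \kappa(\lambda_i)^2$ (here $w_i$ are the left eigenvectors dual to the $v_i$). It therefore suffices to produce, with positive probability, a perturbation for which $\sum_i \kappa(\lambda_i)^2 = O(n^2/\delta^2)$.

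To do this, set $\gamma = c\delta$ for a small absolute constant $c$ and apply Theorem~\ref{thm:davies} (with its parameter set to $\gamma$) to $A + \gamma \bG_n$, choosing $B$ to be the open disk of radius $\|A\| + \|\gamma\bG_n\|$ centered at the origin. A standard spectral-norm tail bound for the complex Ginibre ensemble gives $\|\bG_n\|\le 3$ with probability at least $1-e^{-\Omega(n)}$, so on that event all $n$ eigenvalues lie in $B$ and $\vol_\C(B)\le \pi(1+3\gamma)^2 = O(1)$. Theorem~\ref{thm:davies} then yields
\[
\E \sum_{i}\kappa(\lambda_i)^2\,\mathbf{1}[\lambda_i\in B] \;\le\; \frac{n^2 \vol_\C(B)}{\pi \gamma^2} \;=\; O\!\left(\frac{n^2}{\delta^2}\right).
\]

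Finally, combining Markov's inequality on this truncated sum with a union bound against the norm event $\{\|\gamma\bG_n\|\le\delta\}$ produces a realization of $\bG_n$ for which \emph{both} $\|\bE\|\le\delta=\delta\|A\|$ and $\sum_i \kappa(\lambda_i)^2 = O(n^2/\delta^2)$ hold. Feeding this into the $\kappa_V$-inequality above gives $\kappa_V(A+\bE) = O(n^{3/2}/\delta) = O(n^{3/2}(1+1/\delta))$. The main obstacle is essentially constant-optimization: replacing the implicit $O(\cdot)$ constant by the explicit $4$ requires careful tuning of $c$, of the concentration constant for $\|\bG_n\|$, and of the Markov confidence level. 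A secondary subtlety is that Theorem~\ref{thm:davies} only controls overlaps for eigenvalues \emph{inside} $B$, which is why the choice $\gamma = O(\delta)$ together with the norm event is essential: it ensures that with high probability all $n$ eigenvalues lie in a disk of radius barely larger than $\|A\|$, so no overlap is missed by the truncation.
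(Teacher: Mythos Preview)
The present paper does not contain a proof of this statement: Theorem~\ref{thm:a} is quoted from \cite{banks2019gaussian} in the Related Work section without proof. That said, your approach is essentially the one used in \cite{banks2019gaussian}: a probabilistic-method argument combining Theorem~\ref{thm:davies} with the inequality $\kappa_V(X)^2\le n\sum_i\kappa(\lambda_i)^2$ (which this paper itself invokes, citing Lemma~3.1 of \cite{banks2019gaussian}, in the proof of Theorem~\ref{thm:kappai-probabilistic-intro}), a norm tail bound for $\bG_n$, and Markov's inequality. Your identification of the two remaining issues---tuning constants to reach the explicit factor $4$, and ensuring the disk $B$ captures all eigenvalues so the truncation is lossless---is accurate.
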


Notably, the theorems above do not address whether a real matrix can be regularized by a  \emph{real} perturbation, and \cite{banks2019gaussian} left this as an open question, which we resolve, albeit with a worse dependence on $\delta$.

\begin{remark}[Consequences for Diagonalization Algorithms] The nearly matrix multiplication time diagonalization algorithm of \cite{banks2019pseudospectral} uses perturbation of the input by a random complex Ginibre matrix as a crucial preprocessing step, guaranteeing that the perturbed matrix has $\kappa_V$ at most $\poly(n,1/\gamma)$ and eigenvalue gap at least $\poly(1/n,\gamma)$. Our Theorems \ref{thm:kappai-probabilistic-intro} and \ref{thm:gaps} imply that this continues to hold for perturbations satisfying Assumption 1, with slightly worse polynomial factors. This changes the running time of the algorithm by at most constant factors since that running time depends only logarithmically on these parameters.
\end{remark}

\paragraph{Singular Values of Real Matrices with Complex Shifts.}
In the course of our proof, it will be of particular importance to quantify the behavior of the small singular values of $z - \bG_n$ or $z - \bM_n$ as a function of the imaginary part of the complex scalar $z \in \C$.  There have already been a number of recent results in this direction, which we summarize below.

In the thesis of Ge \cite{ge2017eigenvalue} it was shown that when $\bM_n$ is a real matrix with i.i.d. entries of mean zero and variance $1/n$ satisfying a standard anticoncentration condition, one has 
\begin{equation} \label{eqn:gesingular}
    \P\left[\sigma_n(\bM_n-z)\leq \eps \text{ and } \|\bM_n\|\leq M \right]\leq \frac{Cn^2 \eps^2}{\im(z)}+e^{-cn}
\end{equation}
for all $z$, where $C$ and $c$ are universal constants, independent of $n$. The additional exponential term is an essential feature of the proof technique of considering ``compressible'' and ``incompressible'' vectors in a net argument, and does not go away if one additionally assumes that the entries are absolutely continuous.

In the case of real Ginibre matrices, the following finer result was obtained by Cipolloni, Erd\H{o}s and Schr\"{o}der in \cite{cipolloni2019optimal}:
\begin{equation} \label{eqn:cipollonisingular}
    \P\left[\sigma_n(\bG_n-z) \leq \eps \right]\leq
    C (n^2 (1 + |\log \eps |)\eps^2 + n \eps e^{-\frac{1}{2} n (\Im z )^2})
\end{equation}
for $|z| \le 1 + O(1/\sqrt{n})$, with an improved $n$-dependence at the edge $|z - 1| = O(1/\sqrt{n})$.  In later work \cite{cipolloni2020fluctuation}, the same authors showed that when $\bM_n$ has real i.i.d. entries with unit variance and $|\Im z| \sim 1$, the statistics of the small singular values $z - \bM_n$ agree with those of the complex Ginibre ensemble.\footnote{They further write,
    ``It is expected that the same result holds for all (possibly $n$-dependent) $z$ as long as $|\im(z)| \gg n^{-1/2}$, while in the opposite regime $|\im(z)| \ll n^{-1/2}$ the local statistics of the real Ginibre prevails with an interpolating family of new statistics which emerges for $|\im(z)| \sim n^{-1/2}$.''}

As remarked in the introduction, the key feature of our bounds is that we obtain a strict $\eps^2$ dependence for nonreal $z$, without any additive terms. Our approach is essentially different from the above two approaches, and relies on exploiting a certain conditional independence (Observation \ref{obs:mainobs}) between submatrices of the real and imaginary parts of the resolvent.

\paragraph{Singular Values of Real Matrices with Real Shifts.}
In the more general non-Gaussian case, there are a number of recent results in the literature.  The most relevant recent result is that of Nguyen \cite{nguyen2018random}, who proves a tail bound for all singular values for non-centered ensembles with potentially discrete entries. In the particular case of continuous entries, Nguyen shows that if $\bM_n$ satisfies Assumption \ref{assumption} with parameter $K>0$, 
\begin{equation} \label{eqn:nguyensingular}
    \mathbf{P}\left[\sigma_{n - k + 1}({\bM}_n) \le {\eps}\right] \le n^{k(k-1)} (C k K \eps)^{(k-1)^2},
\end{equation}
in addition to a bound greatly improving the dependence in $k$ at the expense of the dependence on $\eps$ and $n$, as well as results for symmetric Wigner matrices and perturbations thereof.  

The exponent of $\eps$ in \eqref{eqn:nguyensingular} is suboptimal, which \eqref{eqn:nguyensingular} incompatible with our approach. In Theorem \ref{thm:skboundgeneral} we obtain the optimal exponent of $\eps$, namely $k^2$, in exchange for a worse exponent of $n$.  The key ingredient in doing this is a simple ``restricted invertibility'' type estimate (Lemma \ref{lem:linearalgebralemma}) tailored to our setting.

For bounds on the least singular value alone, there is a substantial literature; see Table \ref{tab:centeredresults} for a non-exhaustive summary.

\begin{table}
\centering
\begin{threeparttable}[] 

\begin{tabular}{@{}lllll@{}}
\toprule
Result    & Bound & Setting \\ \midrule

\cite{edelman1988eigenvalues} & $\mathbb{P}[\sigma_n(\bM_n) < \eps] \le n\eps$ & real Ginibre\\

\cite{rudelson2008littlewood} & $\mathbb{P}[\sigma_n(\bM_n) < \eps] \le C n \eps + e^{-cn}$ & real i.i.d. subgaussian \\


\cite{tao2010randomsingular} & $\mathbb{P}[\sigma_n(\bM_n) < \eps] \le n\eps + O(n^{-c})$ & real i.i.d., finite moment assumption\\

\cite{sankar2006smoothed} & $\mathbb{P}[\sigma_n(A + \bM_n) < \eps] \le C n \eps$ & real Ginibre, $A$ real \\

\cite{tikhomirov2017invertibility} & $\mathbb{P}[\sigma_n(A + \bM_n) < \eps] \le C n \eps$ & real ind. rows with log-concave law, $A$ real\\

\cite{banks2019gaussian} & $\mathbb{P}[\sigma_n(A + \bM_n) < \eps] \le n \eps$ & real Ginibre, $A$ real \\


\bottomrule
\end{tabular}

\caption{Some bounds on $\sigma_n$ for real $\bM_n$ and $A$.  Entries of $\bM_n$ have variance $1/n$.} \label{tab:centeredresults}
\end{threeparttable}
\end{table}

\paragraph{Minimum Eigenvalue Gap.}
Bounds on the minimum eigenvalue gap of random non-Hermitian matrices have seen rapid progress in the last few years.  Ge shows in the thesis \cite{ge2017eigenvalue} that when $\bM_n$ has i.i.d. entries with zero mean and variance $1/n$, satisfying a standard anticoncentration condition,
\[
    \P[\gap(\bM_n) < s] = O\left( \delta n^{2+o(1)} + \frac{s^2 n^{4+o(1)}}{\delta^2}\right) + e^{-cn} + \P[\Vert \bM_n \Vert \ge M]
\]
for every $C > 0$ and every $\delta > s > n^{-C}$. In very recent work, Luh and O'Rourke \cite{luh2020eigenvectors} build on Ge's result, dropping the mean zero assumption and extending the range of $s$ all the way down to 0:
\begin{equation} \label{eqn:luhgap}
    \P[\gap(\bM_n) \le s \text{ and } \Vert \bM_n \Vert \le M] \le C s^{2/3} n^{16/15} + Ce^{-cn} + \P[\Vert \bM_n\Vert \ge M].
\end{equation}
However, (\ref{eqn:luhgap}) still requires the entries of $\bM_n$ to be identically distributed, so it does not imply a gap bound for the noncentered Ginibre ensemble $A + \bG_n$ unless $A$ is a scalar multiple of the all-ones matrix.

In our prior work \cite{banks2019pseudospectral}, a complex Gaussian perturbation was crucially used in a preprocessing step in a numerically stable diagonalization algorithm for non-Hermitian matrices. This paper identified the minimum eigenvalue gap as a key feature controlling the stability of the algorithm, and proved:

\begin{theorem}[{\cite[Corollary 3.7]{banks2019pseudospectral}}]
    \label{thm:smoothed} 
    Suppose $A\in \C^{n\times n}$ with $\|A\|\le 1$, and $\bG_n$ is a normalized \emph{complex} Ginibre matrix. For every $\delta \in (0,1/2)$, 
    $$
        \P[\gap(A + \delta \bM_n)<s]\le 42(n/\gamma)^{16/5}s^{6/5}+ \expbound. 
    $$
\end{theorem}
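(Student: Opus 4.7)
The plan is a standard net-plus-tail-bound argument in the complex plane, combining an operator norm concentration, a covering of the spectral disk, and a tail estimate for the bottom two singular values of complex shifts.

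First I would discard a low-probability operator norm event: standard complex Ginibre concentration gives $\|\bG_n\| \le 3$ with probability at least $1 - 2e^{-2n}$, which accounts for the additive $2e^{-2n}$ term in the bound. Conditionally, every eigenvalue of $X := A + \delta \bG_n$ lies in the closed disk $D_R \subset \C$ of radius $R := 1 + 3\delta < 5/2$ centered at the origin.

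Next, I would relate $\gap(X) < s$ to a small singular value event. If $\gap(X) < s$, there exist two eigenvalues $\lambda_i \neq \lambda_j$ with $|\lambda_i - \lambda_j| < s$. For any point $z_* \in \C$ satisfying $|z_* - \lambda_i|, |z_* - \lambda_j| \le \rho$, Weyl's majorization inequality applied to the eigenvalues and singular values of $z_* - X$ gives $\sigma_n(z_* - X)\, \sigma_{n-1}(z_* - X) \le \rho^2$. Cover $D_R$ by an $\eta$-spaced grid of $N = O((R/\eta)^2)$ points, with $\eta \ge s$ to be optimized; choosing $z_*$ to be the grid point closest to the midpoint of $\lambda_i,\lambda_j$ gives $\rho \le s/2 + \eta \le 2\eta$, hence $\sigma_n(z_* - X)\, \sigma_{n-1}(z_* - X) \le 4\eta^2$. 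A union bound yields
\[
    \P[\gap(X) < s] \;\le\; N \cdot \max_{z_* \in \text{grid}} \P\bigl[\sigma_n(z_* - X)\, \sigma_{n-1}(z_* - X) \le 4\eta^2\bigr] + 2 e^{-2n}.
\]

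The probabilistic workhorse is a tail bound on the product $\sigma_n\sigma_{n-1}$ of the two smallest singular values of $z - (A + \delta \bG_n)$, uniformly in $z \in \C$. For centered complex Ginibre, the joint density of singular values vanishes like a squared Vandermonde factor at the origin, producing a bound of the shape $\P[\sigma_n\sigma_{n-1} \le \tau] \lesssim (n\sqrt{\tau}/\delta)^{\alpha}$ for an appropriate exponent $\alpha$. The main obstacle is extending this tail estimate to a general deterministic shift $A$ without losing in $\delta$, since the clean density-based argument for Ginibre relies on unitary invariance that a nontrivial $A$ breaks. The standard workaround is to condition on all but two columns of $\bG_n$, express $\sigma_n \sigma_{n-1}$ as the modulus of a $2 \times 2$ residual determinant formed by projecting the two reserved Gaussian columns onto the orthogonal complement of the conditioned columns, and invoke anticoncentration for a quadratic form in two independent complex Gaussians. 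Plugging this bound into the union bound and choosing $\eta = s$ gives $(R/\eta)^2 \, (n\eta/\delta)^{\alpha} \sim (n/\delta)^{\alpha} s^{\alpha - 2}$, and the specific exponents $16/5$ and $6/5$ in the theorem arise from a H\"older-style interpolation between the sharp joint-two-singular-values bound and a weaker $\sigma_n$-only bound, which is the concession needed to keep the intermediate constants manageable while retaining sub-linear decay in $s$.
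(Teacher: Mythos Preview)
This theorem is not proved in the present paper; it is cited from \cite{banks2019pseudospectral} as prior work. The paper does, however, prove the analogous real-Ginibre result (Theorem \ref{thm:gaps}) in Section \ref{sec:gaps}, and your net-plus-singular-value strategy matches that argument closely: confine the spectrum to a disk by operator norm concentration, cover by a net, invoke the log-majorization fact $\sigma_n\sigma_{n-1} \le |\lambda_n\lambda_{n-1}|$ (Lemma \ref{lem:gaps}) to detect two nearby eigenvalues, and union bound.

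Your final paragraph, however, overcomplicates and partly misidentifies the probabilistic input. You propose conditioning on all but two columns and running a quadratic-form anticoncentration argument to handle the deterministic shift $A$. The actual route in \cite{banks2019pseudospectral}---mirrored in this paper's real case, see equation \eqref{eq:szarekbound}---is simpler: one bounds the \emph{individual} singular values $\sigma_n$ and $\sigma_{n-1}$ separately. For complex Ginibre, Szarek's Theorem \ref{thm:szarek} gives $\P[\sigma_{n-k+1}(\bG_n) \le k\eps/n] \le (C\eps)^{2k^2}$, and this transfers verbatim to $z - (A + \delta\bG_n)$ by the \'Sniady stochastic domination of Corollary \ref{cor:sniady-dominance}; no direct anticoncentration for $A \neq 0$ is needed. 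One then writes
\[
    \P[\sigma_n\sigma_{n-1} \le r^2] \le \min_{x>0}\Bigl\{\P[\sigma_n \le rx] + \P[\sigma_{n-1} \le r/x]\Bigr\} \lesssim \min_{x>0}\Bigl\{(nrx/\delta)^2 + (nr/(\delta x))^8\Bigr\} \sim (nr/\delta)^{16/5},
\]
and combining with the $O((R/s)^2)$ net points yields the $s^{6/5}$ exponent. The exponents $16/5$ and $6/5$ are not a H\"older interpolation but simply the outcome of this optimization in $x$ between the $k=1$ and $k=2$ Szarek bounds.
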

Each of the gap results above are proved by way of tail bounds on the smallest two singular values of $z - \bM_n$. The only other work we are aware of proving gap bounds for the case of matrices with i.i.d. entries is \cite{shi2012smallest}, which proves an inverse polynomial lower bound for the complex Ginibre ensemble.

\subsection{Probabilistic Preliminaries}
Many of our probabilistic arguments hinge on the phenomenon of \textit{anticoncentration}, whereby a random vector is unlikely to lie in a small region. An elementary way to extract quantitative information about such behavior is by controlling the density function of the random vector. Let $\bx \in \R^d$ be a random vector---we will always use boldface font to denote random variables. If the distribution $f_{\bx}$ of $\bx$ is absolutely continuous with respect to the Lebesgue measure on $\R^d$, we denote by
\begin{equation}
    \density(\bx) \defeq \|f_{\bx}\|_\infty
\end{equation}
the infinity norm of its density. We will use, ad nauseam, two basic observations about the quantity $\density$. First, for any $v \in \R^d$,
\begin{equation}
    \P\left[\|\bx - v\| \le \eps \right] \le \frac{\pi^{d/2}}{\Gamma(d/2 + 1)}\density(\bx)^d \le \frac{1}{\sqrt{\pi d}}\left(\frac{2e\pi}{d}\right)^{d/2}\density(\bx)^d,
\end{equation}
where in the first inequality we use the formula for the volume of a ball in $\R^d$, and in the second inequality we use Stirling's approximation for the gamma function. Second, $\density$ is preserved under convolution:



\begin{observation}[Convolution Bound]
    \label{obs:convbound}
    Let $\bx, \by \in \mathbb{R}^d$ be independent random vectors. Then 
    $$
        \density(\bx+\by)\leq \min \{ \density(\bx), \density(\by)\}.
    $$
\end{observation}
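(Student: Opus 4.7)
The plan is to deduce the bound directly from the convolution formula for densities of sums of independent random vectors. Specifically, since $\bx$ and $\by$ are independent and at least one of them has a density (otherwise the right-hand side is infinite and the claim is vacuous), the random vector $\bx+\by$ has a density given by
\[
  f_{\bx+\by}(z) \;=\; \int_{\R^d} f_{\bx}(z-y)\, f_{\by}(y)\, dy,
\]
interpreted via Fubini--Tonelli as $\E[f_{\bx}(z-\by)]$ when only $\bx$ is assumed absolutely continuous.

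From here, the key step is a single inequality: pulling the essential supremum of $f_{\bx}$ outside the integral yields
\[
  f_{\bx+\by}(z) \;\le\; \|f_{\bx}\|_\infty \int_{\R^d} f_{\by}(y)\, dy \;=\; \density(\bx),
\]
since $f_{\by}$ integrates to one. Taking the essential supremum over $z$ gives $\density(\bx+\by) \le \density(\bx)$. By symmetry (swapping the roles of $\bx$ and $\by$ in the convolution formula, which is justified by independence), the same argument yields $\density(\bx+\by) \le \density(\by)$, and combining the two bounds produces $\density(\bx+\by) \le \min\{\density(\bx),\density(\by)\}$.

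There is essentially no obstacle here; the only mild subtlety is the measure-theoretic step of justifying that $f_{\bx+\by}$ is given by the convolution formula almost everywhere (which follows from a standard Fubini argument after testing against an arbitrary Borel set $A \subset \R^d$), and then noting that the pointwise bound above holds almost everywhere so it controls the $L^\infty$ norm.
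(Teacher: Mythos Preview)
Your argument is correct and is the standard one-line convolution bound. The paper itself does not supply a proof of this observation; it is simply stated and used later, so there is nothing to compare against beyond noting that your approach is the expected justification.
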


We will require as well a much more general result of Rudelson and Vershynin quantifying the deterioration of $\density$ after orthogonal projection\footnote{Throughout the paper, we will refer to a rectangular matrix with orthonormal columns as an ``orthogonal projection'' although this is not standard.}. 

\begin{theorem}[\cite{rudelson2015small}] \label{thm:rvdensity} \label{thm:rvproj}
Let $\bx \in \mathbb{R}^d$ have independent entries, each with density pointwise bounded by $K$.  Let $P \in \mathbb{R}^{k \times d}$ denote a deterministic orthogonal projection onto a subspace of dimension $k \le d$.  Then there exists a universal constant $\crv > 0$ such that  
\[ 
    \density(P\bx) \le (\crv K)^k.
\]
If $\bx$ has independent $\calN(0,1)$ entries, one may take $\crv = 1$ and $K = (2\pi)^{-1/2}$.
\end{theorem}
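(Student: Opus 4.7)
The goal is to bound $\density(P\bx) \le (\crv K)^k$ for a universal constant $\crv$. The plan is to reduce to a known slicing inequality for the unit cube. First, scaling $\bx$ by $K$ lets us assume $K = 1$ and aim for $\density(P\bx) \le \crv^k$. Next, since $\density(\cdot) = \|f_\cdot\|_\infty$ is convex in the joint law of $\bx$, and that law is a product measure over the independent coordinates, one seeks an extremal form for each $f_j$. A natural candidate for the worst case is $f_j$ uniform on $[-1/2,1/2]$, which saturates the pointwise density bound. The reduction to this case is delicate: the extreme points of $\{0 \le f \le 1, \int f = 1\}$ are indicators of measurable sets of Lebesgue measure $1$, not intervals, so one must additionally invoke a Steiner-type rearrangement argument on each coordinate to pass from a product of general length-$1$ sets to the hypercube.

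In the reduced setting where $\bx$ is uniform on the hypercube $Q = [-1/2, 1/2]^d$, the orthonormal-rows condition on $P$ combined with the coarea formula gives
\[
    f_{P\bx}(y) = \vol_{d-k}\bigl(\{x \in Q : Px = y\}\bigr),
\]
the $(d{-}k)$-dimensional Hausdorff measure of the affine cross-section of $Q$ by the plane $P^{-1}(y)$. Thus it suffices to bound the $(d{-}k)$-volume of any codimension-$k$ affine slice of the unit cube by $\crv^k$, independent of $d$.

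The main obstacle is this high-codimension cube-slicing bound, a generalization of Ball's classical codim-$1$ inequality (sharp constant $\sqrt 2$). Ball's original proof uses the Fourier integral representation $\vol(Q \cap v^\perp) \le \int \prod_j |\mathrm{sinc}(v_j t)|\, dt$ together with an $L^p$-convexity trick on $|\mathrm{sinc}|$; extending to codimension $k$ requires a vector-valued analogue, as developed by Meyer--Pajor, Vaaler, and others, and it is this geometric step that I expect to occupy most of the work. As a sanity check the Gaussian case $\bx \sim \calN(0, I_d)$ collapses: $P\bx \sim \calN(0, PP^\T) = \calN(0, I_k)$ has density exactly $(2\pi)^{-k/2}$, matching the stated constants $\crv = 1$ and $K = (2\pi)^{-1/2}$.
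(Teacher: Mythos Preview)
The paper does not prove this theorem; it is quoted from Rudelson and Vershynin \cite{rudelson2015small} and used as a black box throughout. Your outline is essentially the strategy of that reference: reduce to the case where $\bx$ is uniform on the unit cube, then apply a high-codimension cube-slicing bound of Ball type.

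One caveat on your reduction step. The extreme-point argument does not follow from an abstract Bauer maximum principle, since the set $\{0\le f\le 1,\ \int f=1\}$ is not compact in any topology making $f\mapsto\density(P\bx)$ upper semicontinuous. It works here for a concrete reason you should make explicit: writing $\density(P\bx)=\sup_y f_{P\bx}(y)$ and swapping the two suprema, for each fixed $y$ the functional $(f_1,\dots,f_d)\mapsto f_{P\bx}(y)$ is multilinear and given by integration against a nonnegative kernel, so the bathtub principle maximizes it coordinate-by-coordinate on indicators of measure-one sets. After that, the Brascamp--Lieb--Luttinger rearrangement inequality (your ``Steiner-type'' step, applied to indicators) reduces to the central section of the unit cube, and the bound $\mathcal H^{d-k}(V\cap[-\tfrac12,\tfrac12]^d)\le C^k$ finishes. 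The codimension-$k$ slicing bound itself is due to Ball; Vaaler's contribution is a lower bound on central sections, so your attribution there is slightly off. Your Gaussian sanity check is correct.
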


Many of our results on real random matrices whose independent entries have bounded density---in other words, matrices satisfying Assumption \ref{assumption}---can be strengthened for real Ginibre matrices. We have found the following comparison theorem to be a crucial tool in the Gaussian case:

\begin{theorem}[Real \'Sniady theorem]
    \label{thm:sniadyreal}
	Let $k \le n$, and let $A_1$ and $A_2$ be $n \times k$ real matrices, each with $k$ distinct singular values, such that $\sigma_i(A_1) \le \sigma_i(A_2)$ for all $i \in [k]$. Then for every $t \ge 0$, there exists a joint distribution on pairs of real  $n \times n$ random matrices $(\bX_1, \bX_2)$ such that 
	\begin{enumerate}
    	\item Each marginal $\bX_1$ and $\bX_2$ has independent $\calN(0,1)$ entries, and 
    	\item Almost surely $\sigma_i(A_1 + t \bX_1) \le \sigma_i(A_2 + t \bX_2)$ for all $1 \le i \le k$.
 	\end{enumerate} 
\end{theorem}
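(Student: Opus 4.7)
The plan is to adapt \'Sniady's coupling approach for the complex case to the real Ginibre ensemble, via Bru's SDE for real Wishart processes.

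First, by the left-right orthogonal invariance of the real Ginibre distribution, I may reduce to the case in which $A_1, A_2$ are (generalized) diagonal matrices $\Sigma_1, \Sigma_2$ carrying their singular values on the diagonal: writing $A_i = U_i \Sigma_i V_i^\T$ via SVD and substituting $\bX_i \mapsto U_i^\T \bX_i V_i$, a coupling for the diagonal case transfers verbatim to the general case, since $U_i^\T \bX_i V_i$ has the same Ginibre distribution as $\bX_i$.

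Second, I would cast the singular-value evolution as a Wishart process. Let $s \mapsto \bW_i(s)$ denote the matrix Brownian motion starting at $\Sigma_i$; by Bru's theorem, the squared singular values $\boldsymbol{\mu}_j^{(i)}(s) \defeq \sigma_j(\bW_i(s))^2$ satisfy a system of SDEs of the schematic form
\[
d\boldsymbol{\mu}_j^{(i)} = 2\sqrt{\boldsymbol{\mu}_j^{(i)}}\, d\bbeta_j^{(i)} + \left(n + \sum_{\ell \ne j} \frac{\boldsymbol{\mu}_j^{(i)} + \boldsymbol{\mu}_\ell^{(i)}}{\boldsymbol{\mu}_j^{(i)} - \boldsymbol{\mu}_\ell^{(i)}}\right) ds,
\]
driven by independent standard Brownian motions $\bbeta_j^{(i)}$, with initial data the squared singular values of $\Sigma_i$.

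Third---and this is the technical heart---I would couple the two trajectories by driving both SDE systems with the \emph{same} Brownian motions, $\bbeta_j^{(1)} = \bbeta_j^{(2)}$. Since by hypothesis $\boldsymbol{\mu}^{(1)}(0) \le \boldsymbol{\mu}^{(2)}(0)$ coordinate-wise, an Ikeda--Watanabe--style pathwise comparison argument should yield $\boldsymbol{\mu}_j^{(1)}(s) \le \boldsymbol{\mu}_j^{(2)}(s)$ for all $s \ge 0$ almost surely: at any time the gap $\boldsymbol{\mu}_j^{(2)} - \boldsymbol{\mu}_j^{(1)}$ touches zero, the diffusion coefficients $2\sqrt{\boldsymbol{\mu}_j^{(i)}}$ match (yielding only a tangential martingale crossing), while the Dyson-type repulsion drift is monotone in the remaining coordinates, so the gap is pushed back to nonnegative values. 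I expect the main obstacle to be carefully handling the singular repulsion drift at collision times within a single process (when $\boldsymbol{\mu}_j^{(i)} = \boldsymbol{\mu}_\ell^{(i)}$); the distinct-singular-value hypothesis in the theorem is there precisely to exclude such degeneracies at time zero, and one must propagate this genericity to show that collisions are avoided almost surely along the coupled dynamics.

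Fourth, I would lift the coupled squared-singular-value trajectories back to a coupling of matrices. Augmenting the Wishart dynamics with the natural singular-vector dynamics on the orthogonal group produces matrix-valued processes $\bW_1(s), \bW_2(s)$ jointly defined on a common probability space, each individually a matrix Brownian motion. Setting $\bX_i \defeq (\bW_i(t^2) - \Sigma_i)/t$ gives marginals with i.i.d.\ $\calN(0,1)$ entries, and by construction $\sigma_j(A_i + t\bX_i) = \sigma_j(\bW_i(t^2)) = \sqrt{\boldsymbol{\mu}_j^{(i)}(t^2)}$, so the required almost-sure singular-value ordering follows.
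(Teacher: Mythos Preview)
Your outline is correct and follows essentially the same route as the proof the paper invokes: the paper does not prove Theorem~\ref{thm:sniadyreal} in-text but cites \'Sniady's original argument and the real-case adaptation in \cite{banks2019gaussian}, both of which proceed exactly as you describe---reduce to diagonal data by orthogonal invariance, run Bru's Wishart SDE for the squared singular values, couple the two systems by sharing driving Brownian motions and apply a pathwise comparison (your drift monotonicity computation $\partial_x \tfrac{\mu_j+x}{\mu_j-x} = \tfrac{2\mu_j}{(\mu_j-x)^2} > 0$ is the right one), then lift back to a matrix-valued coupling via the singular-vector dynamics. The non-collision issue you flag is precisely why the distinct-singular-values hypothesis is present, and is handled in the cited works; the only cosmetic point is that the theorem statement has a typo ($\bX_i$ should be $n\times k$, not $n\times n$), which your argument already silently corrects.
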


\noindent Theorem \ref{thm:sniadyreal} was originally discovered and proved for (square) complex Ginibre matrices in \cite{sniady2002random}.  The necessary technical modifications for the real case were carried out in \cite{banks2019gaussian}, and the proof there trivially extends to rectangular matrices. The virtue of Theorem \ref{thm:sniadyreal} is that one immediately obtains a remarkable stochastic dominance result relating the singular value distributions of non-centered Gaussian matrices.

\begin{corollary}
    \label{cor:sniady-dominance}
    Let $k \le n$, and let $A_1$ and $A_2$ be $n\times k$ real matrices satisfying $\sigma_i(A_1) \le \sigma_i(A_2)$ for all $i \in [k]$, and let $\bX$ have independent $\calN(0,1)$ entries Then, for any  $t,s_1,...,s_k \in \R_{\ge 0}$, 
    $$
        \P\big[\sigma_i(A_1 + t \bX) \le s_i\text{, } \forall i\in[k]\big] \ge \P\big[\sigma_i(A_2 + t \bX) \le s_i\text{, } \forall i \in [k] \big].
    $$
\end{corollary}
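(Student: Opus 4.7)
The plan is to apply Theorem~\ref{thm:sniadyreal} to couple $A_1 + t\bX$ and $A_2 + t\bX$ so that their singular values dominate pointwise, handle the distinct-singular-values hypothesis of that theorem by an approximation argument, and pass to a limit via the portmanteau theorem. The case $t = 0$ is immediate, since both matrices are deterministic and the implication $\sigma_i(A_2) \le s_i\;\forall i \Rightarrow \sigma_i(A_1) \le s_i\;\forall i$ follows directly from the hypothesis.

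For $t > 0$, I would first suppose that $A_1$ and $A_2$ each have $k$ distinct singular values, and let $(\bX_1, \bX_2)$ be the coupling supplied by Theorem~\ref{thm:sniadyreal}, so that each marginal $\bX_j$ has the same law as $\bX$ and $\sigma_i(A_1 + t\bX_1) \le \sigma_i(A_2 + t\bX_2)$ for all $i \in [k]$ almost surely. Under this coupling the event $\{\sigma_i(A_2 + t\bX_2) \le s_i\;\forall i\}$ is contained in $\{\sigma_i(A_1 + t\bX_1) \le s_i\;\forall i\}$, and passing to the marginal distributions of $\bX_j$ yields the desired inequality in this special case.

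To remove the distinctness hypothesis I would approximate: pick sequences $A_j^{(m)} \to A_j$ as $m \to \infty$ such that each $A_j^{(m)}$ has $k$ distinct singular values and the dominance $\sigma_i(A_1^{(m)}) \le \sigma_i(A_2^{(m)})$ is preserved for every $i$. A concrete choice is $A_j^{(m)} := U_j D_j^{(m)} V_j^T$, where $U_j \Sigma_j V_j^T$ is the SVD of $A_j$ and $D_j^{(m)}$ is diagonal with distinct entries approaching $\sigma_i(A_j)$---from below for $j=1$ and from above for $j=2$---so that $\sigma_i(A_1^{(m)}) < \sigma_i(A_1) \le \sigma_i(A_2) < \sigma_i(A_2^{(m)})$ for every $i,m$. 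Applying the previous case to each $m$ gives
\[ \P\big[\sigma_i(A_1^{(m)} + t\bX) \le s_i\;\forall i\big] \ge \P\big[\sigma_i(A_2^{(m)} + t\bX) \le s_i\;\forall i\big]. \]
It then suffices to pass to the limit $m \to \infty$ on both sides.

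Since $A_j^{(m)} + t\bX \to A_j + t\bX$ almost surely and the event $E_s := \{M \in \R^{n\times k} : \sigma_i(M) \le s_i\;\forall i\}$ is closed in $\R^{n\times k}$, the portmanteau theorem yields $\P[A_j^{(m)} + t\bX \in E_s] \to \P[A_j + t\bX \in E_s]$ provided $\P[A_j + t\bX \in \partial E_s] = 0$. This last point is the main technical obstacle: because $t > 0$ and $\bX$ is Gaussian, $A_j + t\bX$ has a smooth Lebesgue density on $\R^{n \times k}$, and each individual singular value $\sigma_i(A_j + t\bX)$ is an absolutely continuous real-valued random variable, so $\P[\sigma_i(A_j + t\bX) = s_i] = 0$ for each $i$. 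Without this absolute-continuity step, one could worry about probability mass accumulating on the boundary $\{\sigma_i = s_i\}$ and breaking the limit; it is precisely the Gaussianity of $\bX$ that rules this out.
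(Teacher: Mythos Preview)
Your proof is correct and follows the same approach as the paper: invoke the coupling of Theorem~\ref{thm:sniadyreal} and remove the distinct-singular-values hypothesis by a continuity argument. The paper dispatches the corollary in a single sentence (``Corollary~\ref{cor:sniady-dominance} need not [assume distinctness], by continuity of $\sigma_i(\cdot)$''), whereas you supply the details of the approximation and the portmanteau step; your careful treatment of the boundary $\{\sigma_i = s_i\}$ via absolute continuity of the Gaussian is exactly the content the paper leaves implicit.
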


\noindent Although Theorem \ref{thm:sniadyreal} currently includes the technical assumption that $A_1$ and $A_2$ each have distinct singular values, Corollary \ref{cor:sniady-dominance} need not, by continuity of $\sigma_i(\cdot)$. We will most often apply Corollary \ref{cor:sniady-dominance} in the case when $A_1 = 0$, to transfer well-known singular value tail bounds from centered case to the non-centered one. For square Gaussian---that is, Ginibre---matrices, such tail bounds were proved by Szarek in \cite{szarek1991condition}.

\begin{theorem}[Szarek] \label{thm:szarek}
    Let $\bG_n$ be a normalized real Ginibre matrix. There exists a universal constant $c>0$ so that
    $$
        (c\eps)^{k^2} \le \P\left[\sigma_{n-k+1}(\bG_n) \le \frac{k\eps}{n} \right] \le (\sqrt{2e}\eps)^{k^2}.
    $$
    In the case of normalized complex Ginibre matrices, these bounds hold if one exchanges the exponent $k^2$ for $2k^2$.
\end{theorem}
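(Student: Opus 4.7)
The plan is to work directly with the explicit joint density of singular values of a square Ginibre matrix. By scaling, writing $G = \sqrt n\,\bG_n$ so that $G$ has i.i.d.\ $\calN(0,1)$ (resp.\ standard complex) entries, the event becomes $\sigma_{n-k+1}(G) \le t$ with $t := k\eps/\sqrt n$. The joint density of the ordered singular values $s_1 \ge \cdots \ge s_n \ge 0$ of $G$ is
\[
p_\beta(s) \;=\; C_{n,\beta}\prod_{i<j}|s_i^2 - s_j^2|^\beta \prod_{i=1}^n s_i^{\beta-1} e^{-\beta s_i^2/2},
\]
with $\beta = 1$ in the real case and $\beta = 2$ in the complex case; $C_{n,\beta}^{-1}$ is a Selberg-type normalizer expressible as a product of Gamma functions. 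With this density in hand, both bounds reduce to multivariable integral estimates.

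For the upper bound I would write $\P[\sigma_{n-k+1}(G) \le t]$ as the integral of $p_\beta$ over the region where the $k$ smallest singular values lie in $[0,t]$, and split the Vandermonde factor $\prod_{i<j}|s_i^2 - s_j^2|^\beta$ according to whether each index lies in the bottom set $B=\{n-k+1,\ldots,n\}$ or the top set $T=\{1,\ldots,n-k\}$. Bottom--bottom pairs are bounded by $t^2$ each, contributing $t^{\beta k(k-1)}$; for cross pairs I use $|s_i^2 - s_j^2|\le s_i^2$ with $i\in T$, which effectively multiplies the top integrand by $\prod_{i\in T}s_i^{2\beta k}$; within-top pairs are retained. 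Integrating the bottom coordinates against $\prod_{i\in B}s_i^{\beta-1}\,ds_i$ over the ordered simplex $\{0\le s_n\le\cdots\le s_{n-k+1}\le t\}$ contributes a further $t^{k\beta}/k!$, so the total power of $t$ comes out to $\beta k(k-1)+k\beta = \beta k^2$, exactly the desired exponent.

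What remains is to control the integral over the top $n-k$ variables
\[
I_{n,k,\beta} \;=\; \int \prod_{i=1}^{n-k} s_i^{(2k+1)\beta - 1}\, e^{-\beta s_i^2/2}\prod_{i<j\le n-k}|s_i^2-s_j^2|^\beta\, ds,
\]
and multiply by $C_{n,\beta}$. After the substitution $\mu_i = s_i^2$ this is again a Laguerre--Selberg integral, so both $I_{n,k,\beta}$ and $C_{n,\beta}^{-1}$ have closed forms in terms of Gamma functions; their ratio telescopes and, applied with Stirling, produces a dimension-independent prefactor of the form $(\sqrt{2e}\,\sqrt n/k)^{\beta k^2}$, which combined with $t=k\eps/\sqrt n$ gives the upper bound $(\sqrt{2e}\eps)^{\beta k^2}$. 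For the lower bound I would restrict the integration to a favorable slab with $s_i\in[t/2,t]$ for $i\in B$ and $s_i$ in a fixed Marchenko--Pastur--typical interval $[c_1,c_2]$ for $i\in T$. On this region every factor of the density is bounded below by a universal constant of the correct shape, and the slab has volume $\Omega(t^{k\beta})$ in the bottom coordinates and $\Omega(1)$ in the top, yielding the matching $(c\eps)^{\beta k^2}$.

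The main obstacle is the Selberg-ratio bookkeeping: the Gamma-function products coming from $C_{n,\beta}^{-1}$ and $I_{n,k,\beta}$ must be coaxed, via Stirling, into exactly the constant $\sqrt{2e}$ with all $n$-dependence canceling against the $t^{\beta k^2} = (k/\sqrt n)^{\beta k^2}\eps^{\beta k^2}$ factor. Essentially all of the computational work is concentrated in this ratio. The complex case requires no new ingredients since the density is of the same form with $\beta=2$, which systematically doubles every exponent and produces $2k^2$ in place of $k^2$.
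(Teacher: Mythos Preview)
The paper does not prove Theorem~\ref{thm:szarek}; it is quoted as a result of Szarek \cite{szarek1991condition} in the preliminaries and used as a black box (together with Corollary~\ref{cor:sniady-dominance}) to derive \eqref{eq:ginibre-real-sv} and Theorem~\ref{thm:gaussianrealrealsingular}G. So there is no ``paper's own proof'' to compare against.

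Your sketch is essentially Szarek's original argument: work with the explicit Laguerre $\beta$-ensemble density, split the Vandermonde according to a bottom/top partition, and reduce the constant to a Selberg--Stirling computation. The power counting you give for the upper bound is correct and is exactly why the exponent is $\beta k^2$. Two places deserve more care if you carry this out in full. First, for the upper bound you correctly flag that all the content is in the ratio $C_{n,\beta}\,I_{n,k,\beta}$; getting the $n$-dependence to cancel precisely against $(k/\sqrt n)^{\beta k^2}$ and extracting the constant $\sqrt{2e}$ is a genuine (if routine) Gamma-function calculation that your sketch defers. Second, your lower-bound slab argument is a bit glib: saying the top coordinates have ``volume $\Omega(1)$'' is not the right statement, since there are $n-k$ of them and the Vandermonde is present; what one actually needs is that the event ``top $n-k$ singular values lie in a typical bulk window'' has probability bounded below by a universal constant, which one can get either from the same Selberg machinery or from concentration of the empirical spectral measure. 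With those two points tightened, your outline matches the proof in \cite{szarek1991condition}.
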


\noindent Finally, one can bound the quantities $B_{\bG_n,p} = \E\left[\|\bG_n\|^p\right]^{1/p}$ explicitly in the Gaussian case: 
\begin{lemma} 
\label{lem:CGnp}
    Let $\bG_n$ be an $n\times n$ real Ginibre matrix and assume that $1\leq p\leq 2n$. Then $B_{\bG_n,p} \leq 9 $.
\end{lemma}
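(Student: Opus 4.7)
The plan is to decompose $\|\bG_n\| \le 2 + (\|\bG_n\|-2)_+$, control the mean via Gordon's comparison inequality and the positive fluctuation via Gaussian concentration, and then exploit the fact that the hypothesis $p \le 2n$ is precisely the threshold at which the $L^p$ norm of the fluctuation saturates at a universal constant. The first input is Gordon's bound $\E\sigma_1(\bH) \le 2\sqrt n$ applied to the $n\times n$ matrix $\bH = \sqrt n\,\bG_n$ with i.i.d.\ $\calN(0,1)$ entries, which gives $\E\|\bG_n\| \le 2$. The second input is Gaussian concentration: since $\|\bG_n\| = \|\bH\|/\sqrt n$ is a $(1/\sqrt n)$-Lipschitz function of the entries of $\bH$ in the Frobenius norm, the standard concentration inequality yields
\[
    \P\bigl[\|\bG_n\| > 2 + t\bigr] \le e^{-nt^2/2},\quad t \ge 0.
\]

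Applying Minkowski's inequality to the pointwise bound $\|\bG_n\| \le 2 + (\|\bG_n\|-2)_+$ and then integrating the tail,
\[
    B_{\bG_n,p} \le 2 + \bigl(\E[(\|\bG_n\|-2)_+^p]\bigr)^{1/p}, \quad \E[(\|\bG_n\|-2)_+^p] \le \int_0^\infty p\,t^{p-1} e^{-nt^2/2}\,dt = \frac{p}{2}\Bigl(\frac{2}{n}\Bigr)^{p/2} \Gamma(p/2),
\]
where the last equality uses the substitution $u = nt^2/2$. Taking the $p$-th root, applying the elementary estimate $\Gamma(p/2) \le (p/2)^{p/2}$ valid for $p\ge 2$ (and handling $p=1$ directly via $\Gamma(1/2) = \sqrt\pi$, which gives an even smaller bound $\sqrt{\pi/(2n)}$), and invoking the hypothesis $p \le 2n$ to bound $\sqrt{p/n} \le \sqrt 2$, the fluctuation's $L^p$ norm is controlled by $(p/2)^{1/p}\sqrt{p/n} \le e^{1/(2e)}\sqrt 2 < 2$. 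Hence $B_{\bG_n,p} \le 2 + 2 = 4 < 9$, with substantial room to spare.

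No genuine obstacle arises in this proof; the only calculational subtlety is that the elementary bound $\Gamma(p/2)\le (p/2)^{p/2}$ fails at $p=1$ (since $\sqrt\pi > \sqrt{1/2}$), which is why that case is handled separately. The choice of split point $2$ is what keeps the deterministic contribution bounded for all $p$ and aligns the concentration estimate with the range $p \le 2n$; a smaller split point would leave a deterministic $T^p$ term that grows with $p$, and a larger one would waste the concentration bound.
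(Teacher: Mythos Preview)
Your proof is correct and rests on the same two inputs as the paper's---the bound $\E\|\bG_n\|\le 2$ and the Gaussian tail $\P[\|\bG_n\|>2+t]\le e^{-nt^2/2}$---together with a split at the level $2$. The paper splits the layer-cake integral $\E\|\bG_n\|^p=p\int_0^\infty t^{p-1}\P[\|\bG_n\|\ge t]\,dt$ at $t=2$ and then invokes convexity to bound $(t-2+2)^{p-1}$, arriving at three terms that sum to the constant $9$. Your Minkowski decomposition $\|\bG_n\|\le 2+(\|\bG_n\|-2)_+$ \emph{before} taking moments is the cleaner route: it bypasses the convexity step entirely and yields the sharper bound $B_{\bG_n,p}\le 4$.

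One minor gap: the estimate $\Gamma(p/2)\le(p/2)^{p/2}$ fails not only at $p=1$ but on the whole interval $p\in[1,2)$, since $\Gamma(x)>x^x$ for every $x\in[1/2,1)$. This is trivially patched by monotonicity of $L^p$ norms---for $p\in[1,2]$ the fluctuation's $L^p$ norm is at most its $L^2$ norm $\sqrt{2/n}\le\sqrt2$---and in any case the lemma is only applied in the paper with $p=2k^2\ge 2$.
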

The proof proceeds by integrating well-known tail bounds on the operator norm of a Ginibre matrix, and is deferred to Appendix \ref{sec:ginibretail}.

\section{Anticoncentration} \label{sec:anticoncentration}

In this section we study the anticoncentration properties of certain quadratic functions of rectangular matrices with independent entries. These will be necessary in Section 3 to extract singular value tail bounds.

\begin{theorem}[Density of Quadratic Forms]
    \label{thm:anticoncentration}
    Assume that $\bX,\bY \in \R^{n\times k}$ are random matrices with independent entries, each with density on $\R$ bounded a.e. by $K>0$. Let $Z \in \R^{n\times n}$, $U,V \in \R^{n\times k}$, and $W \in \R^{k\times k}$ be deterministic, and write $q(\bX,\bY) \defeq \bX^\T Z \bY + \bX^\T U + V^\T \bY + W$. Then
    $$
        \density\left(q(\bX,\bY)\right) \le (1 + k^2)\left(\crv^2K^2 \sqrt{2e\pi k} \min_{j > k^2 + k + 1} \frac{1}{\sqrt{j-k+1} \sigma_j(Z)}\right)^{k^2}.
    $$
\end{theorem}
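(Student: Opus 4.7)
The plan is to prove the bound by first conditioning on $\bY$, extracting a pointwise density bound via Theorem~\ref{thm:rvproj}, and then integrating a sharp tail bound for $\sigma_k(Z\bY+U)$. Setting $N(\bY)=Z\bY+U$ and $C(\bY)=V^\T\bY+W$, we have $q = \bX^\T N + C$; conditional on $\bY$, the $k$ rows of $q$ are independent and of the form $\bx_j^\T N + C_{j,:}$, where $\bx_1,\ldots,\bx_k$ are the columns of $\bX$. A thin QR decomposition $N=QR$ combined with Theorem~\ref{thm:rvproj} yields $\density(N^\T\bx_j\mid\bY)\le (\crv K)^k/\prod_{i\le k}\sigma_i(N)$, and multiplying across the $k$ independent rows gives $\density(q\mid\bY)\le (\crv K/\sigma_k(Z\bY+U))^{k^2}$, so
$$
\density(q)\le (\crv K)^{k^2}\,\E_\bY\!\left[\sigma_k(Z\bY+U)^{-k^2}\right].
$$

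The next step is to aim for a tail of the form $\P[\sigma_k(Z\bY+U)<\tau]\le (\tau/\tau_0)^{k^2+1}$ for the threshold
$$
\tau_0 \defeq \frac{\sqrt{j-k+1}\,\sigma_j(Z)}{\crv K\sqrt{2e\pi k}}.
$$
The significance of the exponent $k^2+1$ is that a layer-cake computation splitting at $\tau_0$ will cleanly produce
$\E[\sigma_k^{-k^2}] = (1+k^2)/\tau_0^{k^2}$, accounting for both the $(1+k^2)$ prefactor and the precise scale appearing in the statement.

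The real work is in producing this tail bound, and this is where I expect the main difficulty. Using the SVD $Z=U_Z\Sigma_Z V_Z^\T$ and setting $\tilde\bY = V_Z^\T\bY$, we have $\sigma_k(Z\bY+U) = \sigma_k(\Sigma_Z\tilde\bY+U')$ with $U'=U_Z^\T U$; passing to the top $j-1$ rows and using $\sigma_i(Z)\ge\sigma_j(Z)$ for $i\le j-1$ will give
$$
\sigma_k(Z\bY+U)\ge \sigma_j(Z)\cdot\sigma_k(\hat\bY+\hat U)
$$
for an appropriate deterministic $\hat U$ and random $(j-1)\times k$ matrix $\hat\bY$ whose columns are the top-$(j-1)$ coordinates of $V_Z^\T\by_l$. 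Each such column is an orthogonal projection of a vector with independent density-$K$ entries, so by Theorem~\ref{thm:rvproj} it has density at most $(\crv K)^{j-1}$ in $\R^{j-1}$, and the columns remain mutually independent. A small-ball argument---applying Observation~\ref{obs:convbound} together with $\|v\|_\infty\ge 1/\sqrt k$ to handle a fixed-$v$ combination $(\hat\bY+\hat U)v$, then a net over the unit sphere of $\R^k$, and a Stirling-based volume bound on the ball in $\R^{j-1}$---is expected to yield the desired $s^{k^2+1}$ tail. The condition $j>k^2+k+1$ enters precisely here: the ambient dimension $j-1$, minus the $k-1$ exponent absorbed by the net, must still leave at least $k^2+1$ free factors of $s$. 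The delicate bookkeeping will be executing this small-ball step while controlling $\|\hat\bY\|$ for the net argument and keeping the constants clean enough to yield the exact $\sqrt{2e\pi k/(j-k+1)}$ form; conditioning, Rudelson--Vershynin, and the SVD reduction are essentially routine once that tail estimate is in hand.
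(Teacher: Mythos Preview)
Your overall plan matches the paper's: condition on $\bY$, apply Theorem~\ref{thm:rvproj} to get $\density(q\mid\bY)\le(\crv K)^{k^2}\sigma_k(Z\bY+U)^{-k^2}$, reduce via the SVD of $Z$ to a rectangular least-singular-value problem, and finish by a layer-cake integration. Your arithmetic extracting the $(1+k^2)$ prefactor from an $s^{k^2+1}$ tail is also correct.

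The genuine gap is the net step for the tail of $\sigma_k(\hat\bY+\hat U)$. Passing from a net point on $S^{k-1}$ to an arbitrary unit vector costs $\epsilon\,\|\hat\bY+\hat U\|$, so the argument requires control on $\|\hat\bY\|$. But Assumption~\ref{assumption} is only a pointwise density bound and carries \emph{no} moment information whatsoever: the entries of $\bY$ may have Cauchy-like tails, in which case $\|\hat\bY\|$ has no finite moments and no choice of net spacing works. Conditioning on $\{\|\hat\bY\|\le L\}$ does not rescue this either, since on the complementary event you would still need some integrability of $\sigma_k^{-k^2}$, which is precisely what you are trying to establish. The paper avoids the net entirely via the \emph{negative second moment identity} (its Lemma~\ref{lem: rectsingvals}): writing $\bc_1,\ldots,\bc_k$ for the columns of $Q_1(\bY+\tilde U)$ and $\bW_i$ for the orthogonal projector onto $\mathrm{span}\{\bc_l:l\neq i\}^\perp$ in $\R^j$, one has $\sigma_k\ge k^{-1/2}\min_i\|\bW_i\bc_i\|$. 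Each $\bW_iQ_1$ is itself an orthogonal projector and is independent of $\by_i$, so Theorem~\ref{thm:rvproj} plus a union bound over $i\in[k]$ yields a tail with exponent $j-k+1$ and \emph{no} operator-norm term anywhere. Replace your net by this device, keep the factored form $Q_1\bY$ (so that Theorem~\ref{thm:rvproj} still sees the independent-entry vector $\by_i$ rather than merely a column of $\hat\bY$ with bounded joint density), and the rest of your outline goes through with the paper's tail exponent $j-k+1$; the layer-cake optimization then delivers exactly the stated bound.
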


Whenever $\sigma_j(Z)$ is zero, we interpret $1/\sigma_j(Z) = \infty$; thus the above theorem has content only when $\rank (Z) > k^2 + k + 1$. After presenting the proof, we will comment on some improvements when $X,Y$ are Gaussian or $k = 1$. Let us begin with a small observation that we will use in the proof to come.

\begin{lemma} \label{lem:density-conditioning}
   Consider measurable functions $f: \mathbb{R}^p\times \mathbb{R}^q \to \mathbb{R}^r$ and $c: \mathbb{R}^q \to \mathbb{R}_{\geq 0}$. Let $\bx\in \mathbb{R}^p$ and $\by \in \mathbb{R}^q$ be independent random vectors with densities bounded almost everywhere. Assume that for almost all $y\in \mathbb{R}^{r}$ it holds that $\density\left(f(\bx, y)\right)\leq c(y).$ Then
   $$
    \density\left(f(\bx, \by)\right)\leq \E [c(\by)].
    $$
\end{lemma}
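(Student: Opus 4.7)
The plan is a direct unpacking of the definition of $\density$ as an $L^\infty$ bound on a density, combined with conditioning on $\by$. First I would observe that the hypothesis $\density(f(\bx, y)) \le c(y)$ means that for almost every $y \in \mathbb{R}^q$ the distribution of $f(\bx, y)$ is absolutely continuous with density $g_y(z)$ satisfying $\|g_y\|_\infty \le c(y)$. Consequently, for every Borel set $A \subset \mathbb{R}^r$ and almost every $y$,
$$
    \P\left[f(\bx, y) \in A\right] \;=\; \int_A g_y(z)\, dz \;\le\; c(y)\, \leb(A).
$$

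Next, using independence of $\bx$ and $\by$ together with Tonelli's theorem, I would integrate this bound against the law of $\by$:
$$
    \P\left[f(\bx, \by) \in A\right] \;=\; \int_{\mathbb{R}^q} \P\left[f(\bx, y) \in A\right] f_{\by}(y)\, dy \;\le\; \leb(A) \int_{\mathbb{R}^q} c(y) f_{\by}(y)\, dy \;=\; \leb(A)\cdot \E\left[c(\by)\right].
$$
Since $A$ was arbitrary, the Radon--Nikodym theorem applied to the distribution of $f(\bx, \by)$ against Lebesgue measure yields that this distribution is absolutely continuous with density bounded almost everywhere by $\E[c(\by)]$, which is exactly the statement $\density(f(\bx, \by)) \le \E[c(\by)]$.

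I do not anticipate a real obstacle: this is essentially a statement about conditional densities dressed up in the $\density$ notation. The only minor technicality is verifying that $(y,z) \mapsto g_y(z)$ can be chosen jointly measurable so that the use of Tonelli is justified; this follows from standard disintegration on Polish spaces (equivalently, working with a regular conditional distribution of $f(\bx,\by)$ given $\by$). Everything else is a one-line computation, and no properties of $f$, $c$, or the distributions of $\bx$ and $\by$ beyond those stated are needed.
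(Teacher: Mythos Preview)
Your proof is correct and essentially identical to the paper's: both reduce to showing $\P[f(\bx,\by)\in A]\le \leb(A)\,\E[c(\by)]$ for every measurable $A$, obtain the pointwise bound $\P[f(\bx,y)\in A]\le c(y)\leb(A)$ from the hypothesis, and integrate over $\by$ using independence. Your aside about joint measurability of $(y,z)\mapsto g_y(z)$ is unnecessary, since your actual argument only uses measurability of $y\mapsto \P[f(\bx,y)\in A]$, which follows directly from measurability of $f$ and Fubini.
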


\begin{proof}
    Let $\leb_{\mathbb{R}^r}$  denote the Lebesgue measure on $\mathbb{R}^r$. Note that it is enough to show that for every measurable set $E \subset \mathbb{R}^r$ one has  
    $$
        \P[f(\bx, \by)\in E] \leq \leb_{\R^r}(E) \E[c(\by)].
    $$
    On the other hand, by assumption, we have $ \P[f(\bx, y) \in E] \le \leb_{\R^r}(E) c(y)$ for all $y$. From the fact that  $\bx$ and $\by$ are independent and have a density it follows that
    $$
        \P[f(\bx, \by)\in E] = \E[\indicator{f(\bx, \by)\in E}] = \E\left[\E\left[\indicator{f(\bx, \by)\in E}| \by \right] \right] \leq  \E\left[\leb_{\R^r}(E)c(\by)\right],$$
    as we wanted to show. 
\end{proof}

Second, we will require the following left tail bound on the smallest singular value of certain rectangular random matrices, which is a direct consequence of Theorem \ref{thm:rvdensity}.

\begin{lemma}
\label{lem: rectsingvals}
    Let $\bY$ be a $n\times k$ random matrix whose entries are independent and have density on $\R$ bounded a.e. by $K>0$. Furthermore, for some $k \le j \le n$ let $V$ be a $j\times n$ projector. Then
    \begin{equation}
        \label{eq:rectsingvals}
        \P[\sigma_k(V\bY)\leq s] \leq k \frac{(\crv K \sqrt{\pi k} s)^{j-k+1}}{\Gamma((j-k+3)/2)} \defeq C_{j,k} s^{j-k+1}
    \end{equation}
\end{lemma}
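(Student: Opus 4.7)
The plan is to bound $\sigma_k(V\bY)$ below by a minimum of column distances and then invoke the Rudelson--Vershynin density bound (Theorem~\ref{thm:rvproj}). Write $\bY_1,\ldots,\bY_k \in \R^n$ for the columns of $\bY$ and, for each $i \in [k]$, let $H_{-i} \subset \R^j$ denote the linear span of $\{V\bY_j : j \ne i\}$. A routine computation with the Moore--Penrose pseudoinverse (whose Frobenius norm is the sum of $\dist(V\bY_i, H_{-i})^{-2}$) gives
$$\sigma_k(V\bY)^{-2} \le \sum_{i=1}^k \dist(V\bY_i, H_{-i})^{-2},$$
so $\sigma_k(V\bY) \ge k^{-1/2} \min_i \dist(V\bY_i, H_{-i})$. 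Hence $\{\sigma_k(V\bY) \le s\}$ is contained in $\bigcup_{i=1}^k\{\dist(V\bY_i, H_{-i}) \le s\sqrt{k}\}$, and a union bound over $i$ supplies the leading factor of $k$.

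By symmetry it suffices to bound $\P[\dist(V\bY_k, H_{-k}) \le s\sqrt{k}]$, for which I would condition on $\bY_1,\ldots,\bY_{k-1}$. Almost surely $\dim H_{-k} = k-1$, so its orthogonal complement inside $\range(V) \subseteq \R^j$ has dimension $j-k+1$. Let $P \in \R^{j\times j}$ be the orthogonal projector onto this complement, so that $\dist(V\bY_k, H_{-k}) = \|PV\bY_k\|$. Since $V$ has orthonormal rows, $PV(PV)^\T = PVV^\T P = P$, meaning the $j-k+1$ nonzero singular values of $PV$ all equal $1$; after an isometry of $\R^j$, the map $PV$ may be identified with a $(j-k+1) \times n$ matrix with orthonormal rows, i.e.\ an orthogonal projection.

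Conditional on $\bY_1,\ldots,\bY_{k-1}$, the column $\bY_k$ still has independent coordinates each with density at most $K$, so Theorem~\ref{thm:rvproj} gives $\density(PV\bY_k) \le (\crv K)^{j-k+1}$. Bounding the probability of lying in the ball of radius $s\sqrt{k}$ in $\R^{j-k+1}$ by density times volume then yields
$$\P\big[\|PV\bY_k\| \le s\sqrt{k} \,\big|\, \bY_1,\ldots,\bY_{k-1}\big] \le (\crv K)^{j-k+1} \cdot \frac{\pi^{(j-k+1)/2}(s\sqrt{k})^{j-k+1}}{\Gamma((j-k+3)/2)}.$$
Integrating out the conditioning via Lemma~\ref{lem:density-conditioning} and summing over $i \in [k]$ delivers the stated inequality. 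The only delicate point is the reinterpretation of $PV$ as a genuine orthogonal projection so that Theorem~\ref{thm:rvproj} applies verbatim; this is straightforward, so I anticipate no substantive obstacle.
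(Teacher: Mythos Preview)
Your proposal is correct and follows essentially the same argument as the paper: both use the negative second moment identity (your Moore--Penrose Frobenius computation) to bound $\sigma_k(V\bY)$ below by $k^{-1/2}\min_i \dist(V\bY_i,H_{-i})$, condition on the other columns, and then apply Theorem~\ref{thm:rvproj} to the composite projector $PV$ (the paper's $\bW_iV$) together with the volume of a ball in $\R^{j-k+1}$. Your explicit verification that $PV(PV)^\T = P$ is the only added detail; the paper simply asserts that $\bW_iV$ is an orthogonal projector.
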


\begin{proof}
    Let $\by_1, \dots, \by_k$ be the columns of $\bY$ and for every $i=1, \dots, k$ let $\bW_i$ be the $(j - k + 1)\times j$ orthogonal projector onto the subspace orthogonal to the span of $\{V\by_l\}_{l\neq i}$. Applying the ``negative second moment identity'' \cite{tao2010random}, we have 
    $$
        k\left(\min_{i \in [k]} \|\bW_i V \by_i\|\right)^{-2} \ge \sum_{i=1}^k \|\bW_i V\by_i\|^{-2} \ge \sum_{i=1}^k \sigma_i(V\bY)^{-2} \ge k\sigma_k(V\bY)^{-2},
    $$
    which implies
    $$
        \sigma_k(\bY) \ge \frac{\min_i \|\bW_iV\by_i\|}{\sqrt k}.
    $$
    Since $\bW_iV$ is itself an orthogonal projector, and is independent of $\by_i$, Theorem \ref{thm:rvdensity} and Observation \ref{lem:density-conditioning} ensure that the density of $\|\bW_i V \by_i\|$ is bounded by $(\crv K)^{j-k+1}$. Applying a union bound and recalling again the formula for a ball,
    $$
        \P[\sigma_k(\bY) \le s] \le \P[\min_i \|\bW_iV\by_i\| \le \sqrt k s] \le \sum_{i=1}^k \P[\|\bW_iV\by_i\| \le \sqrt k s] \le k \frac{(\crv K \sqrt{\pi k} s)^{j-k+1}}{\Gamma((j-k+3)/2)}.
    $$
\end{proof}

\noindent With these two tools in hand, we proceed with the proof.

\begin{proof}[Proof of Theorem \ref{thm:anticoncentration}]
    For any deterministic $Y\in \mathbb{R}^{n\times k}$ one has $\density(q(\bX,Y)) = \density(\bX^T(ZY + U))$, since $\density$ is agnostic to deterministic translations. By the polar decomposition we can write $ZY + U = VS$, where $V \in \R^{n \times k}$ is a partial isometry and $S \succeq 0$. By Theorem \ref{thm:rvdensity}, the density of the random matrix $\bX^\T V$ in $\R^{k \times k}$ is at most $(\crv K)^{k^2}$, and thus the density of $\bX^\T V S$ is at most $(\crv K)^{k^2} (\det S)^{-k}$; moreover
    $$
        \det S = \prod_{i=1}^k\sigma_i(S) = \prod_{i=1}^k\sigma_i(ZY + U).
    $$
    Therefore by Lemma \ref{lem:density-conditioning},
    \begin{equation}
        \label{eq:qdensity}
        \density(q(\bX,\bY)) \le (\crv K)^{k^2} \E\left[\prod_{i\in k}\sigma_i(Z\bY + U)^{-k}\right].
    \end{equation}
    We now compute this expectation.
    
    Choose $j\ge k$ so that $\sigma_j(Z) > 0$, and write the SVD of $Z$ in the following block form,
    \begin{equation}
        \label{eq:zblock}
        Z = P^T\Sigma Q = \begin{pmatrix} P^\T_1 & P^\T_2 \end{pmatrix}\begin{pmatrix} \Sigma_1 & \\ & \Sigma_2 \end{pmatrix} \begin{pmatrix} Q_1 \\ Q_2 \end{pmatrix},
    \end{equation}
    where $\Sigma_1$ is a diagonal matrix containing the largest $j$ singular values, and $P,Q$ are orthogonal matrices. This gives
    $$
        Z\bY + U =\begin{pmatrix}P_1^\T & P_2^\T \end{pmatrix}
        \begin{pmatrix} \Sigma_1 Q_1 \bY + P_1 U \\ \Sigma_2 Q_2 \bY + P_2 U \end{pmatrix}.
    $$
    By interlacing of singular values, $\sigma_i(Z\bY + U) \ge \sigma_i(\Sigma_1 Q_1 \bY + P_1 U)$ for each $i = 1,...,k$, so we are free to study
    \begin{equation}
        \label{eq:sv-prod}
        \E\left[\prod_{i \in [k]}\sigma_i(\Sigma_1 Q_1 \bY + P_1 U)^{-k}\right] \le \sigma_j(\Sigma_1)^{-k^2} \E\left[ \prod_{i \in [k]}\sigma_i(Q_1 \bY + \Sigma_1^{-1}P_1 U)^{-k}\right].
    \end{equation}
    
    Now, since $Q_1$ is a partial isometry, we can select a matrix $\tilde U$ so that $Q_1 \tilde U = \Sigma_1^{-1}P_1 U$, and observe that
    $$
        \E\prod_{i \in [k]}\sigma_i(\Sigma_1 Q_1 \bY + P_1 U)^{-k} \le \sigma_j(Z)^{-k^2} \sigma_k(Q_1(\bY + \tilde U))^{-k^2}.
    $$
    The random matrix $\bY + \tilde U$ satisfies the conditions of Lemma \ref{lem: rectsingvals}, so we can apply the tail formula for expectation to obtain
    \begin{align*}
        \E\left[\sigma_k(Q_1(\bY + \tilde U))^{-k^2}\right]
        &= \int_0^\infty \P\left[\sigma_k(Q_1(\bY + \tilde U))^{-k^2} \ge t\right]dt \\
        &\le \lambda + C_{j,k}\int_\lambda^{\infty} t^{-\frac{j - k + 1}{k^2}}dt & & \text{$C_{j,k}$ from \eqref{eq:rectsingvals}}\\
        &= \lambda + C_{j,k}\frac{k^2}{j - k^2 - k + 1}\lambda^{\frac{k^2+k-j-1}{k^2}} & & \text{if } j - k + 1 > k^2. \\
    \end{align*} Optimizing the above bound in $\lambda$, we set $\lambda = C_{j,k}^{\frac{k^2}{j-k+1}}$ and evaluate $C_{j,k}$ to find
    \begin{align*}
        \E\left[\sigma_k(Q_1(\bY + \tilde U))^{-k^2}\right] 
        &\le \left(\frac{k (\crv K \sqrt{\pi k})^{j-k+1}}{\Gamma((j-k+3)/2)}\right)^{\frac{k^2}{j-k+1}}\left(1 + \frac{k^2}{j - k^2 - k + 1}\right) \\
        &\le (\crv K\sqrt{\pi k})^{k^2} \left(\frac{k}{\Gamma((j - k + 3)/2)}\right)^{\frac{k^2}{j - k + 1}}(1 + k^2) & & j-k+1 > k^2\\
        &\le (\crv K \sqrt{\pi k})^{k^2} \left(\frac{k}{\sqrt{\pi(j - k + 1)}}\right)^{\frac{k^2}{j-k+1}}\left(\frac{\sqrt{2e}}{\sqrt{j-k+1}}\right)^{k^2}(1 + k^2) & &  \text{Stirling}\\
        &\le \left(\frac{\crv K  \sqrt{2e\pi k}}{\sqrt{j-k+1}}\right)^{k^2} (1+k^2) & & j-k+1 > k^2
    \end{align*}
    where we have repeatedly used that $j-k+1 > k^2$, as well as Stirling's approximation, $\Gamma(z + 1) \ge \sqrt{2\pi z}(z/e)^z$, valid for real $z \ge 2$. To complete the proof, we combine the above with equation \eqref{eq:qdensity}.
\end{proof}

To end this section, we offer some improvements of the above for small $k$ or Gaussian $\bX$ and $\bY$.

\begin{corollary}
    In the case $k=1$, the conclusion of Theorem \ref{thm:anticoncentration} may be improved to
    $$
        \density\left(q(\bX,\bY)\right) \le 2 (\crv K)^2 \sqrt{2e\pi} \min_{j\ge 2}\frac{1}{\sqrt j \prod_{i \in [j]}\sigma_i(Z)^{1/j}.}
    $$
    Recall that in the Gaussian case, we may take $\crv = 1$ and $K = (2\pi)^{-1/2}$.
\end{corollary}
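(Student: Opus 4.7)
The plan is to specialize the argument of Theorem~\ref{thm:anticoncentration} to $k=1$, where $q(\bX,\bY)$ is a scalar and one can bound $\E[1/\|Z\bY + U\|]$ directly, bypassing the application of Lemma~\ref{lem: rectsingvals} that is responsible for both the weaker geometric object and the stricter index range in the general case.

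First I would condition on $\bY = Y$ as in the proof of Theorem~\ref{thm:anticoncentration}: for $k=1$ the ``polar decomposition'' step reduces to normalizing the single vector $ZY + U$, so Theorem~\ref{thm:rvdensity} gives $\density(q(\bX,Y))\le \crv K/\|ZY+U\|$, and Lemma~\ref{lem:density-conditioning} then yields
$$
    \density(q(\bX,\bY)) \;\le\; \crv K \cdot \E\!\left[\frac{1}{\|Z\bY+U\|}\right].
$$
For each $j \le \rank Z$, the block SVD $Z = P^\T \Sigma Q$ from \eqref{eq:zblock} and the fact that $P_1^\T$ has orthonormal columns give $\|Z\bY+U\|\ge \|\bv\|$, where $\bv := \Sigma_1 Q_1\bY + P_1 U \in \R^j$.

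The crucial departure from the general-$k$ proof is that for $k=1$ I would bound the density of the entire vector $\bv$ directly, rather than passing to its least singular value. Theorem~\ref{thm:rvdensity} gives $\density(Q_1\bY)\le (\crv K)^j$, and the Jacobian of $\Sigma_1$ contributes $\prod_{i=1}^j\sigma_i(Z)$, so $\density(\bv)\le D := (\crv K)^j / \prod_{i=1}^j\sigma_i(Z)$. Hence $\P[\|\bv\|<s]\le D\pi^{j/2}s^j/\Gamma(j/2+1)$, and the tail representation of $\E[1/\|\bv\|]$ truncated at $\lambda > 0$ gives
$$
    \E[1/\|\bv\|] \;\le\; \lambda + \frac{D\pi^{j/2}}{(j-1)\Gamma(j/2+1)}\lambda^{1-j},
$$
which is finite precisely when $j\ge 2$---this is the origin of the relaxed index condition. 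Optimizing in $\lambda$, using $j/(j-1)\le 2$ and Stirling's bound $\Gamma(j/2+1)^{1/j}\ge \sqrt{j/(2e)}$, I would arrive at
$$
    \E[1/\|\bv\|] \;\le\; \frac{2\crv K \sqrt{2e\pi}}{\sqrt{j}\,\prod_{i=1}^j \sigma_i(Z)^{1/j}};
$$
multiplying by $\crv K$ and minimizing over $j\ge 2$ yields the claim.

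There is no serious obstacle---the whole proof reduces to Stirling and a one-variable optimization. The conceptual reason the $k=1$ case improves over Theorem~\ref{thm:anticoncentration} is that bounding the density of the whole vector $\bv$ absorbs the full Jacobian $\prod_{i=1}^j\sigma_i(Z)$, whereas Lemma~\ref{lem: rectsingvals} captures only the least singular value of a rectangular projection; this simultaneously replaces $\sigma_j(Z)$ by the geometric mean $\prod_{i=1}^j\sigma_i(Z)^{1/j}$ and removes the need for $j > k^2 + k + 1$.
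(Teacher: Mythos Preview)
Your proposal is correct and follows essentially the same approach as the paper's own proof: both condition on $\bY$ to get $\density(q(\bX,\bY)) \le \crv K\,\E[\|\Sigma_1 Q_1 \bY + P_1 U\|^{-1}]$, then bound the density of the full vector $\bv = \Sigma_1 Q_1 \bY + P_1 U$ by $(\crv K)^j/\det\Sigma_1$ (absorbing the whole Jacobian) and run the same tail-integral-and-optimize argument as in the main theorem, picking up the geometric mean $\prod_{i\in[j]}\sigma_i(Z)^{1/j}$ and the relaxed condition $j\ge 2$. The paper is terser---it simply says ``replace $C_{j,k}$ by $\det\Sigma_1^{-1}C_{j,1}$ in the remainder of the proof''---but the substance is identical to what you wrote out explicitly.
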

\begin{proof}
    The discussion between equations \eqref{eq:qdensity} and \eqref{eq:sv-prod} in this case tells us
    $$
        \density(\left(q(\bX,\bY)\right) \le \crv K \E\left[\|\Sigma_1Q_1\bY + P_1 U\|^{-1}\right].
    $$
    The random vector $\Sigma_1 Q_1 \bY + P_1 U$ has density on $\R^j$ bounded by $(\crv K)^j \det\Sigma_1^{-1}$, so we have the tail bound
    $$
        \P\left[\|\Sigma_1Q_1 \bY + P_1 U\| \le s\right] \le \det \Sigma_1^{-1} \frac{(\crv K \sqrt\pi s)^j}{\Gamma(j/2 + 1)} = \det \Sigma_1^{-1} \cdot C_{j,1} s^j. 
    $$
    Replacing in the remainder of the proof $C_{j,k}$ with $\det \Sigma_1^{-1}C_{j,1}$, and recalling $\det \Sigma_1 = \sigma_1(Z) \cdots \sigma_j(Z)$, will give
    $$
        \density\left(q(\bX,\bY)\right) \le \crv K \E\left[\|\Sigma_1 Q_1 Y + P_1 U\|^{-1}\right] \le 2\frac{(\crv K)^2\sqrt{2e\pi}}{\sqrt j \prod_{i \in [j]} \sigma_i(Z)^{1/j}}
    $$
    whenever $j\ge 2$.
\end{proof}

We believe that Theorem \ref{thm:anticoncentration} should hold, for every $k$, the $j$th singular value of $Z$ exchanged for the geometric mean of the top $j$. The main obstacle seems to be that the Theorem \ref{thm:rvdensity} cannot tightly bound the density of $A\by$, where $\by \in \R^{n}$ is a random vector with independent entries and bounded density, and $A \in \R^{n\times k}$ is an arbitrary matrix.

In a different direction, one can improve the constant in Theorem \ref{thm:anticoncentration} under a Gaussian assumption.

\begin{gtheorem}{\ref{thm:anticoncentration}}
    \label{thm:anticoncentration-gaussian}
    If $\bX,\bY \in \R^{n\times k}$ have independent, standard Gaussian entries, then t Theorem \ref{thm:anticoncentration} holds with the stronger conclusion:
    \begin{equation}
        \density\left(q(\bX,\bY)\right) \le \left(\frac{1}{2} \min_{j > 2k} \frac{1}{\sqrt{j-2k+1} \sigma_j(Z)}\right)^{k^2}.
    \end{equation}
\end{gtheorem}

\begin{proof}
    Once again we modify the proof beginning at \eqref{eq:sv-prod}. Observing that $Q_1 \bY + \Sigma_1^{-1}P_1U$ is a $j\times k$, non-centered Gaussian matrix, Theorem \ref{thm:sniadyreal} implies
    $$
        \E\prod_{i=1}^k\sigma_i(Q_1 \bY + \Sigma_1^{-1}P_1 U)^{-k^2} \le \E \prod_{i=1}^k\sigma_i(Q_1\bY)^{-k} = \E (\det \bY^\T Q_1^\T Q_1 Y)^{-k^2/2}.
    $$
    Now, $\bY^\T Q_1^\T Q_1 \bY$ is a real Wishart matrix with parameters $(j,k)$, and it is known \cite{goodman1963distribution} that the determinant of such a matrix is distributed as a product of independent $\chi^2$ random variables $\bnu_j \bnu_{j-1} \cdots \bnu_{j-k+1}$, where $\bnu_l \sim \chi^2(l)$. Computing directly,
    $$
        \E \bnu_l^{-k/2} = \int_{0}^\infty \frac{x^{l/2 - k/2-1}\exp(-x/2)}{2^{l/2}\Gamma(l/2)} = \frac{2^{-k/2} \Gamma \left((l-k)/2\right)}{\Gamma \left(l/2\right)},
    $$
    whenever $l > k$. For even $k$, this has the closed form $(l-2)^{-1}(l-4)^{-1}\cdots(l-k)^{-1} \le (l-k)^{-k/2}$. This final bound holds for odd $k\ge 3$, by repeated application of $z\Gamma(z) = \Gamma(z + 1)$ and one use of the inequality $\sqrt{2z/\pi}\Gamma(z) \le \Gamma(1/2 + z) \le \sqrt z \Gamma(z)$, valid for all $z\ge 1/2$. When $k=1$, this inequality again gives us $\E\nu_l^{-1/2} \le (\pi(l-1)/2)^{-1/2}$. As above, we can take $\crv = 1$ and $K = (2\pi)^{-1/2}$ in the Gaussian case, so
    \begin{align*}
        \density\left(q(\bX,\bY)\right) 
        &\le \left(\frac{1}{\sqrt{2\pi}\sigma_j(Z)}\right)^{k^2}\prod_{l = j-k+1}^k \E \nu_l^{-k/2} \\
        &\le \left(\frac{1}{2 \sigma_j(Z)}\right)^{k^2}\prod_{l = j-k+1}^j (l - k)^{-k/2} \\
        &\le \left(\frac{1}{2\sqrt{j-2k+1} \sigma_j(Z)}\right)^{k^2}.
    \end{align*}
    The condition $j>2k$ ensures that each $\E \nu_l^{-k/2} < \infty$ for $l = j-2k+1,...,j$.
\end{proof}

\section{Singular Value Bounds for Non-Centered Real Matrices}\label{sec:realreal}
In this section, we discuss singular value tail bounds for real matrices with independent absolutely continuous entries. In particular, our study of minimum eigenvalue gap and eigenvalue condition numbers will require tail bounds on the least two singular values for shifted random matrices of the form $z - (A + \bM_n)$, where $z \in \R$ and $A \in \R^{n\times n}$ are deterministic, and $\bM_n$ satisfies Assumption~\ref{assumption}.

As a warm-up, we obtain as an immediate consequence of Theorem \ref{thm:szarek} and Corollary \ref{cor:sniady-dominance}---Szarek's singular value bounds for centered real Ginibre matrices, and the stochastic dominance corollary to \'Sniady's Comparison Theorem---that
\begin{equation}
    \label{eq:ginibre-real-sv}
    \P\left[\sigma_{n-k+1}(z - (A + \gamma \bG_n)) \le \eps \right] \le \left(\frac{\sqrt{2e}n\eps}{k\gamma}\right)^{k^2}
\end{equation}
for every $\gamma > 0$ and $k \in [n]$. This $\eps^{k^2}$ behavior will be a useful benchmark by which to assess our results below.

For matrices with i.i.d. subgaussian entries, results similar to Szarek's theorem are known, but they are accompanied by additive error terms of the form $e^{-cn}$ and therefore do not yield useful results in the limit as $\eps \to 0$. The closest result to ours appears in \cite{nguyen2018random}; it excises the additive error terms, but contains a sub-optimal exponent on $\eps$. We will add one key insight to Nguyen's proof that allows one to obtain the correct $\eps$-dependence.

\subsection{A Restricted Invertibility Lemma}
The device we add to Nguyen's argument, and which we will return to at several points throughout the paper, is the following lemma, which shows that the $k$th largest eigenvalue of a PSD matrix is approximately witnessed by the smallest eigenvalue of some principal $k\times k$ submatrix.
\begin{lemma}[Principal Submatrix with Large $\sigma_k$] \label{lem:linearalgebralemma}
    Let $X \in \mathbb{C}^{n \times n}\setminus\{0\}$ be positive semidefinite. Then for every $1\le k\le n$, there exists an $k\times k$ principal submatrix $X_{S,S}$ such that
    \begin{equation}\label{eqn:linalg}
    \lambda_k(X_{S,S}) \ge \frac{\Tr(X)}{\sum_{i=1}^k \lambda_i(X)}\cdot \frac{\lambda_k(X)}{k(n-k+1)}.
    \end{equation}
  \end{lemma}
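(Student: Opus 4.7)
The plan is to convert the target bound into one on a ratio of two elementary symmetric polynomials of the spectrum of $X$, and then prove a clean combinatorial inequality relating $e_k(\lambda)$ and $e_{k-1}(\lambda)$.

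\textbf{Reduction to $\det/\Tr(\mathrm{adj})$.} For any $k\times k$ PSD matrix $A$ with eigenvalues $\mu_1\ge\cdots\ge \mu_k\ge 0$, one has $\Tr(\mathrm{adj}\,A)=e_{k-1}(\mu_1,\ldots,\mu_k)$, and since $\mu_1\cdots\mu_{k-1}=\det(A)/\mu_k$ is one of its $k$ summands this gives $\mu_k(A)\,\Tr(\mathrm{adj}\,A)\ge \det(A)$. Applying this to each $X_{S,S}$ and using the mediant inequality $\max_S a_S/b_S\ge (\sum_S a_S)/(\sum_S b_S)$ for positive $a_S,b_S$,
$$\max_{|S|=k}\lambda_k(X_{S,S})\ \ge\ \frac{\sum_{|S|=k}\det(X_{S,S})}{\sum_{|S|=k}\Tr(\mathrm{adj}(X_{S,S}))}.$$
By the Leibniz expansion of $\det(tI+X)$, the numerator equals $e_k(\lambda_1,\ldots,\lambda_n)$. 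For the denominator, expand $\Tr(\mathrm{adj}(X_{S,S}))=\sum_{i\in S}\det(X_{S\setminus\{i\},S\setminus\{i\}})$ and reverse summation, noting that each $(k-1)$-subset $S'$ appears in exactly $n-k+1$ pairs $(S,i)$; this gives $\sum_{|S|=k}\Tr(\mathrm{adj}(X_{S,S}))=(n-k+1)\,e_{k-1}(\lambda)$. So it suffices to show
$$k\,M_k\,e_k(\lambda)\ \ge\ \Tr(X)\,\lambda_k\,e_{k-1}(\lambda), \qquad M_k\defeq \sum_{i\le k}\lambda_i.$$

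\textbf{The combinatorial inequality.} Writing $|I|\prod_{i\in I}\lambda_i=\sum_{j\in I}\prod_{i\in I}\lambda_i$ for each $k$-subset $I$ and reindexing by $S'=I\setminus\{j\}$ yields
$$k\,e_k(\lambda)\ =\ \Tr(X)\,e_{k-1}(\lambda)\ -\ \sum_{|S'|=k-1}\Bigl(\prod_{i\in S'}\lambda_i\Bigr)L(S'),\qquad L(S')\defeq \sum_{i\in S'}\lambda_i.$$
Multiplying by $M_k$ and rearranging, the target reduces to
$$(M_k-\lambda_k)\,\Tr(X)\,e_{k-1}(\lambda)\ \ge\ M_k\sum_{|S'|=k-1}\Bigl(\prod_{i\in S'}\lambda_i\Bigr)L(S').$$
Because the largest $(k-1)$-subset-sum of the $\lambda_i$ is attained at the top indices, $L(S')\le \sum_{i=1}^{k-1}\lambda_i=M_k-\lambda_k$ for every $(k-1)$-subset $S'$; hence the right-hand side is at most $M_k(M_k-\lambda_k)\,e_{k-1}(\lambda)$. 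Since $\Tr(X)\ge M_k$, the left-hand side is at least $M_k(M_k-\lambda_k)\,e_{k-1}(\lambda)$, which closes the loop.

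The main obstacle is the symmetric-function inequality of the second step: the preceding reduction is routine Cauchy-Binet bookkeeping, but collapsing the target onto the transparent bound $L(S')\le M_k-\lambda_k$ requires finding the exact linear combination of identities. One can verify that the bound is tight---equality holds in the rank-$k$ projection case $\lambda_1=\cdots=\lambda_k>0=\lambda_{k+1}=\cdots=\lambda_n$, in which every $(k-1)$-subset of $\{1,\ldots,k\}$ has $L(S')=M_k-\lambda_k$ and $\Tr(X)=M_k$.
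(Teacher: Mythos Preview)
Your proof is correct and follows essentially the same route as the paper's: both reduce to $\max_{|S|=k}\lambda_k(X_{S,S})\ge e_k(\lambda)/\big((n-k+1)e_{k-1}(\lambda)\big)$ via the inequality $\lambda_k(X_{S,S})\,e_{k-1}(X_{S,S})\ge \det(X_{S,S})$ and the double-counting $\sum_{|S|=k}e_{k-1}(X_{S,S})=(n-k+1)e_{k-1}(\lambda)$, and then establish $kM_k e_k(\lambda)\ge \Tr(X)\lambda_k e_{k-1}(\lambda)$ from the identity $k e_k=\sum_{|S'|=k-1}(\Tr X - L(S'))\prod_{i\in S'}\lambda_i$ together with $L(S')\le M_k-\lambda_k$ and $\Tr X\ge M_k$. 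The only cosmetic difference is that you phrase the first step via the adjugate and the mediant inequality, while the paper pulls $\max_S\lambda_k(X_{S,S})$ out of the sum directly; one small caveat is that your mediant formulation needs $b_S\ge 0$ and $\sum_S b_S>0$ rather than strict positivity of each $b_S$, which holds once you assume $\lambda_k(X)>0$ (the case $\lambda_k(X)=0$ being trivial).
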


\begin{proof}
Examining the coefficient of $\lambda^k$ in the characteristic polynomial $\det(\lambda - X)$, we have
\[ \sum_{|S| = k} \det X_{S,S} = e_k(\lambda_1(X), \lambda_2(X), \dots, \lambda_n(X)),\]
where $e_k$ denotes the $k$-th elementary symmetric function, and the sum runs over subsets of $[n]$. We may now  have the upper bound:
\begin{align*}
    e_k(X) &= \sum_{|S|=k} \det(X_{S,S})\\
            &= \sum_{|S|=k} \lambda_k(X_{S,S})\lambda_{k-1}(X_{S,S})\ldots \lambda_1(X_{S,S})\\
            &\le \sum_{|S|=k} \lambda_k(X_{S,S})e_{k-1}(X_{S,S})& &\textrm{since $\lambda_i(X_{S,S})\ge 0$ by interlacing}\\
            &\le \max_S \lambda_k(X_{S,S})\cdot \sum_{|S|=k}\sum_{T\subset S, |T|=k-1} \det(X_{S',S'})\\
            &=\max_S \lambda_k(X_{S,S})\cdot (n-k+1)e_{k-1}(X).
\end{align*}        
It now remains to furnish a complementary lower bound on $e_k(X)$ in terms of $e_{k-1}(X)$. Recall the routine fact that
\begin{align*}
    k e_k(X) = k \sum_{|S| = k} \prod_{i \in S}\lambda_i(X) = \sum_{|T| = k-1} \sum_{j \notin T} \lambda_j(X) \prod_{i \in T} \lambda_i(X).
\end{align*}
Now, for each $|T| = k-1$,
\begin{align*}
    \sum_{j \in [k]} \lambda_j(X) \sum_{\ell \notin T}\lambda_\ell(X)
    &=  \sum_{j \in [k]} \lambda_j(X) \left(e_1(X) - \sum_{j \in T}\lambda_j(X)\right) \\
    &= \lambda_k(X) e_1(X) +  \left(\sum_{j \in [k-1]} \lambda_j(X)\right)e_1(X) - \left(\sum_{j \in T}\lambda_j(X)\right)\left(\sum_{j \in [k]}\lambda_j(X)\right) \\
    &\ge \lambda_k(X) e_1(X),
\end{align*}
since $\sum_{j \in [k-1]} \lambda_j(X) \ge \sum_{j \in T} \lambda_j(X)$, and $e_1(X) \ge \sum_{j \in [k]} \lambda_j(X)$. Thus
$$
    k\sum_{j \in [k]}\lambda_j(X) \cdot e_k(X) \ge \sum_{|T|= k-1} \lambda_k(X)e_1(X) \prod_{i \in T}\lambda_i(X) = \lambda_k(X)e_1(X)e_{k-1}(X).
$$
 Putting everything together, and recalling $e_1(X) = \Tr X$, $$
    \max_S \lambda_k(X_{S,S}) \ge \frac{e_k(X)}{(n-k+1)e_{k-1}(X)} \ge \frac{\Tr(X)}{\sum_{i \in [k]} \lambda_i(X)} \frac{\lambda_k(X)}{k(n-k+1)}
$$
as desired.
\end{proof}

We will employ Lemma \ref{lem:linearalgebralemma} in the form of the corollary below.

\begin{corollary}
    \label{cor:linearalgnonpsd}
    Let $1 \le k \le n$. For every matrix $R \in \C^{n\times k}$, there exists a $k\times k$ submatrix $Q$ of $R$ such that
    $$
        \sigma_k(Q) \ge \frac{\sigma_k(R)}{\sqrt{k(n-k+1)}}.
    $$
    Similarly, for every matrix $A \in \C^{n\times n}$, there are subsets $S,T \subset [n]$ of size $k$ such that
    $$
        \sigma_k(A_{S,T}) \ge \frac{\|A\|_F}{\sqrt{\sum_{i \in [k]} \sigma_i(A)^2}} \frac{\sigma_k(A)}{k(n-k+1)} \ge \frac{\sigma_k(A)}{k(n-k+1)}
    $$
\end{corollary}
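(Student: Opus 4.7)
The plan is to reduce both parts to Lemma \ref{lem:linearalgebralemma} applied to an appropriate Gram matrix, using the simple identity that the principal submatrix $(MM^*)_{S,S}$ is exactly the Gram matrix $M_{S,:}(M_{S,:})^*$ of the rows of $M$ indexed by $S$, so its smallest eigenvalue equals $\sigma_k(M_{S,:})^2$. I do not expect any serious obstacle; the only care needed is in tracking ``rows'' versus ``columns'' across two applications of the lemma, and in noticing that when a tall matrix is used, the ratio $\Tr(X)/\sum_{i\in[k]}\lambda_i(X)$ appearing in \eqref{eqn:linalg} simplifies.

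For the first statement, I would apply Lemma \ref{lem:linearalgebralemma} to the PSD matrix $X := RR^* \in \C^{n\times n}$. Since $R$ is $n\times k$ with $k \le n$, the matrix $X$ has rank at most $k$, and its nonzero eigenvalues are precisely $\sigma_1(R)^2,\ldots,\sigma_k(R)^2$. Hence $\Tr(X) = \sum_{i=1}^k \lambda_i(X)$, so the leading ratio in \eqref{eqn:linalg} collapses to $1$, and the lemma yields some $S\subseteq[n]$ of size $k$ with
$$\sigma_k(R_{S,:})^2 \;=\; \lambda_k\bigl((RR^*)_{S,S}\bigr) \;\ge\; \frac{\sigma_k(R)^2}{k(n-k+1)}.$$
Setting $Q := R_{S,:}$ and taking square roots gives the advertised bound.

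For the second statement, I would first select columns by applying Lemma \ref{lem:linearalgebralemma} to $X := A^*A$, obtaining a set $T\subseteq[n]$ with $|T|=k$ and
$$\sigma_k(A_{:,T})^2 \;=\; \lambda_k\bigl((A^*A)_{T,T}\bigr) \;\ge\; \frac{\Tr(A^*A)}{\sum_{i=1}^k \sigma_i(A)^2}\cdot\frac{\sigma_k(A)^2}{k(n-k+1)} \;=\; \frac{\|A\|_F^2}{\sum_{i=1}^k \sigma_i(A)^2}\cdot\frac{\sigma_k(A)^2}{k(n-k+1)}.$$
Next I would invoke the first part of the corollary, already proved, on the $n\times k$ matrix $A_{:,T}$ to select rows $S\subseteq[n]$ of size $k$ with $\sigma_k(A_{S,T}) \ge \sigma_k(A_{:,T})/\sqrt{k(n-k+1)}$. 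Composing the two bounds compounds the $\sqrt{k(n-k+1)}$ factor into $k(n-k+1)$ and yields the claimed lower bound. The final inequality in the statement is immediate, since $\|A\|_F^2 = \sum_{i=1}^n \sigma_i(A)^2 \ge \sum_{i=1}^k \sigma_i(A)^2$, making the leading ratio at least $1$.
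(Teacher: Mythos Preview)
Your proposal is correct and is precisely the intended derivation: the paper states this result as an immediate corollary of Lemma~\ref{lem:linearalgebralemma} without giving a proof, and your two-step reduction via the Gram matrices $RR^*$ and $A^*A$ (noting that the trace ratio collapses to $1$ in the rectangular case) is exactly how one unpacks it.
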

\noindent This generalizes the elementary fact that the operator norm of an $n\times n$ matrix is bounded above by $n$ times the maximal entry. Corollary \ref{cor:linearalgnonpsd} additionally sits within a much larger literature on \textit{restricted invertibility}; see \cite{naor2017restricted} for a comprehensive introduction.

\subsection{Proof of Theorem \ref{thm:skboundgeneral}}
Finally, we may prove the desired tail bound:

\begin{rtheorem}{\ref{thm:skboundgeneral}}
Let  $\bM_n \in \mathbb{R}^{n \times n}$ be a random matrix satisfying Assumption~\ref{assumption} with parameter $K>0$.  Then
$$\P\left[\sigma_{n-k+1}(\bM_n) \leq \eps \right]  \le \binom{n}{k} \left(\crv K \eps \sqrt{kn(n - k + 1)} \right)^{k^2} \le n^{k^2 + k} k^{\frac{1}{2} k^2}(\crv K)^{k^2} \eps^{k^2}.$$
\end{rtheorem}
\begin{proof}[Proof of Theorem \ref{thm:skboundgeneral}]
    We repeat the argument of Nguyen \cite{nguyen2018random}, but using Corollary \ref{cor:linearalgnonpsd} where Nguyen uses the restricted invertibility theorem of \cite{naor2017restricted}. 
    
    Suppose $\sigma_{n-k+1}(\bM_n) \le \eps$. By the minimax formula for singular values, there exist orthogonal unit vectors $\bz_1, \dots, \bz_k \in \mathbb{R}^n$ such that $\Vert \bM_n \bz_i \Vert \le \eps$.  Letting $\bZ \in \mathbb{R}^{n \times k}$ be the matrix whose columns are $\bz_1, \dots, \bz_k$, we can bound $\Vert \bM_n\bZ \Vert_F \le \eps \sqrt{k}$. Since $\sigma_k(\bZ) = 1$, by Corollary \ref{cor:linearalgnonpsd}, there is a $k\times k$ submatrix $\bZ_1$ of $\bZ$ for which
    $$
        \|\bZ_1\|^{-1} \le \sqrt{k(n-k+1)}.
    $$
    Denote by $\bZ$ the subset of rows of $\bZ$ participating in $\bZ_1$; by permuting if necessary we can write
    $$
        \bZ = \begin{pmatrix}\bZ_1 \\ \bZ_2\end{pmatrix} \qquad \text{and} \qquad \bM_n = \begin{pmatrix}\bM_1 & \bM_2 \end{pmatrix},
    $$
    observing that 
    \begin{equation}
        \label{mza}
        \bM\bZ\bZ_1^{-1} = \begin{pmatrix}\bM_1 & \bM_2\end{pmatrix} \begin{pmatrix}\bZ_1 \\ \bZ_2 \end{pmatrix}\bZ_1^{-1} = \bM_1 + \bM_2 \bZ_2\bZ_1^{-1}.
    \end{equation}
    
    Denote the columns of $\bM_n$ by $\bm_1, \dots, \bm_n$ and let $\bH$ denote the orthogonal projector onto the $k$-dimensional subspace orthogonal to the span of $\{\bm_i\}_{i \not\in \bS}$, so that $\bH \bM_2 = 0$. Thus we have
    \[ 
        \sum_{i \in \bS} \|\bH \bm_i\|^2 = \|\bH\bM\bZ\bZ_1^{-1}\|_F^2 
        \le \Vert \bM_n\bZ \bZ_1^{-1} \Vert_{F}^2 
        \le \Vert \bM_n\bZ \Vert_{F}^2 \Vert \bA^{-1} \Vert^2 
        \le \eps^2 k^2(n-k+1).
    \]
    Since the entries of $\bM_n$ are independent, with densities on $\R$ bounded by $\sqrt n K$, by Theorem \ref{thm:rvproj} the above event occurs with probability at most
    \[ 
        \prod_{i=1}^k\P\left[\|\bH \bm_i\| \le \eps k\sqrt{n-k+1}\right] < \left(\crv K\sqrt n \cdot \eps \sqrt{k(n-k+1)}\right)^{k^2}.
    \]
    Performing a union bound over all possibilities for the subset $\bS$ of rows of $\bZ$, we finally obtain
    \[ 
        \P\left[\sigma_{n-k+1}(\bM_n) \leq \eps \right]  \le \binom{n}{k} \left(\crv K \eps \sqrt{kn(n - k + 1)} \right)^{k^2} \le n^{k^2 + k} k^{\frac{1}{2} k^2}(\crv K)^{k^2} \eps^{k^2}.
    \]
\end{proof}

Comparing with Szarek's result (Theorem \ref{thm:szarek} above), we conclude that the exponent of $\eps$ in Theorem \ref{thm:skboundgeneral} is optimal, and if not for the factor of $\binom{n}{k}$ arising from the union bound, the exponent of $n$ would be optimal as well. Since we made no requirement that $\bM_n$ is centered, the following corollary is immediate:
\begin{corollary} \label{cor:realrealsingulargamma}
    Let $z \in \R$ and $A \in \mathbb{R}^{n \times n}$ be deterministic, and $\bM_n$ satisfy Assumption \ref{assumption} with parameter $K>0$. Then
    \[
        \P[\sigma_{n-k+1}(z - (A + \bM_n)) \le \eps] \le n^{\frac{1}{2}k^2 + k} k^{\frac{1}{2} k^2}(\crv K)^{k^2} \eps^{k^2}.
    \]
\end{corollary}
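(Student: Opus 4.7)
\textbf{Proof Plan for Corollary \ref{cor:realrealsingulargamma}.}

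The plan is to observe that the corollary is an immediate consequence of Theorem \ref{thm:skboundgeneral} once we verify that the random matrix $\bN \defeq z - (A + \bM_n)$ itself satisfies Assumption \ref{assumption} with the same density parameter $K$. This is essentially the remark made immediately after Theorem \ref{thm:skboundgeneral} in the preceding exposition.

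First, I would unpack the entries of $\bN$: the $(i,j)$ entry is $\bN_{ij} = (zI - A)_{ij} - (\bM_n)_{ij}$, which is a deterministic affine shift (by $(zI - A)_{ij}$ and the sign flip) of $(\bM_n)_{ij}$. Because the entries of $\bM_n$ are independent, the entries of $\bN$ are independent as well; and because shifting an absolutely continuous random variable by a constant (and negating it) does not change the supremum of its density, each entry of $\bN$ has a density on $\R$ that is bounded almost everywhere by $\sqrt n K$. Hence $\bN$ satisfies Assumption \ref{assumption} with the same parameter $K$.

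Next, I would simply apply Theorem \ref{thm:skboundgeneral} to $\bN$, noting that the singular values of $\bN = z - (A + \bM_n)$ are what we wish to bound. This yields
\[
    \P\!\left[\sigma_{n-k+1}\!\left(z - (A + \bM_n)\right) \le \eps\right]
    \le \binom{n}{k}\!\left(\crv K \eps \sqrt{kn(n-k+1)}\right)^{k^2}
    \le n^{\frac{1}{2}k^2 + k} k^{\frac{1}{2}k^2}(\crv K)^{k^2}\eps^{k^2},
\]
where in the final inequality one uses $\binom{n}{k} \le n^k$ and $\sqrt{n(n-k+1)} \le n$. No obstacle arises here; the only subtlety is the conceptual one of recognizing that the class of matrices satisfying Assumption \ref{assumption} is preserved under deterministic affine shifts, so Theorem \ref{thm:skboundgeneral} applies verbatim without needing to be re-derived for the non-centered setting.
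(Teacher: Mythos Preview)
Your proposal is correct and matches the paper's approach exactly: the paper declares the corollary ``immediate'' because Theorem \ref{thm:skboundgeneral} makes no centering assumption, and you simply spell out that $z-(A+\bM_n)$ satisfies Assumption \ref{assumption} with the same parameter $K$. (One minor arithmetic slip: the inequalities you cite, $\binom{n}{k}\le n^k$ and $\sqrt{n(n-k+1)}\le n$, actually yield the exponent $n^{k^2+k}$ from Theorem \ref{thm:skboundgeneral} rather than the $n^{\frac12 k^2+k}$ stated in the corollary, which appears to be a typo in the paper.)
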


We record our initial observation regarding real Ginibre matrices, equation \eqref{eq:ginibre-real-sv}, as the following theorem.

\begin{rtheorem}{\ref{thm:gaussianrealrealsingular}}
    Let $z \in \R$ and $A \in \mathbb{R}^{n \times n}$ be deterministic, and $\bG_n$ be a normalized Ginibre matrix. For every $\gamma > 0$,
    \[
        \P[\sigma_{n-k+1}(z - (A + \gamma \bM_n)) \le \eps] \le \left(\frac{\sqrt{2e} n \eps}{k \gamma} \right)^{k^2}.
    \]
    In the case $k = 1$, one has a better constant:
    \[ \P[\sigma_{n}(z - (A + \gamma \bM_n)) \le \eps] \le \frac{n \eps}{\gamma}.\]
\end{rtheorem}
\begin{proof}
    When $A=0$, this is Theorem \ref{thm:szarek}, and the better constant for $k=1$ is a result of Edelman \cite{edelman1988eigenvalues}.  The conclusion for general $A$ then follows from Corollary \ref{cor:sniady-dominance}.
\end{proof}

\section{Singular Value Bounds for Real Matrices with Complex Shifts}
\label{sec:realcomplex}

In order to control the eigenvalue gaps and pseudospectrum of random real perturbations, we need to understand the smallest singular values of real random matrices with complex scalar shifts. As  discussed in the introduction, our results will be stated in terms of the quantities
$$
    B_{\bM_n,p} \defeq \left[ \E\|\bM_n\|^p\right]^{1/p},
$$
and important features of the bounds in our context are (1) the optimal dependence on $\eps$ as $\eps \to 0$, and (2) the factor $\frac{1}{|\im z|}$ controlling the necessary deterioration of the bound as $z$ approaches the real line.
\begin{rtheorem}{\ref{thm:singvalscomplexshifts-intro}}
\label{thm:singvalscomplexshifts}
    Let $z\in \C\setminus \R$ and $A\in\R^{n\times n}$ be deterministic, and let $\bM_n$ satisfy Assumption \ref{assumption} with parameter $K>0$. For every $k \le \sqrt n - 2$,
    $$
        \P\left[\sigma_{n-k+1}\left(z - (A + \bM_n)\right) \le \eps \right] \le (1 + k^2){\binom{n}{k}}^2\left(C_{\ref{thm:singvalscomplexshifts}} k^2(nK)^3\left(\left( B_{\bM_n,2k^2} + \|A\| + |\Re z|\right)^2 + |\Im z|^2 \right)\frac{\eps^2}{|\Im z|}\right)^{k^2},
    $$
    \color{darkgray}
    \color{black}
    where $C_{\ref{thm:singvalscomplexshifts-intro}}$ is a universal constant defined in \eqref{eq:singvalscomplexshifts-const}.
\end{rtheorem}

\noindent In the Gaussian case, we can excise this factor of $(1 + k^2)$ and extend the range of $k$.

\begin{rtheorem}{\ref{prop:gaussiansingvalscomplex-intro}}
\label{prop:gaussiansingvalscomplex}
    Let $z\in\C \setminus \R$ and $A \in \R^{n\times n}$ be deterministic, and let $\bG_n$ be a normalized $n\times n$ real Ginibre matrix. For every $\gamma > 0$, and every $k \le n/7$, 
    $$
        \P\left[\sigma_{n-k+1}(z - (A + \gamma \bG_n)) \le \eps \right] \le {\binom{n}{k}}^2 \left(\frac{\sqrt{7e} k^2 n^3}{2\gamma^3} \left(\left(\gamma B_{\bG_n,2k^2} + \|A\| + |\Re z|\right)^2 + |\Im z|^2\right)\frac{\eps^2}{|\Im z|}\right)^{k^2}.
    $$
\end{rtheorem}

\subsection{Proof of Theorem \ref{thm:singvalscomplexshifts-intro}}
In view of Corollary \ref{cor:linearalgnonpsd}, we can study the $k$th smallest singular value of $z - (A + \bM_n$) by examining the smallest singular value of every $k\times k$ submatrix of its inverse. In particular, we will show momentarily that Theorem \ref{thm:singvalscomplexshifts} may be reduced to the following lemma, which we will prove in Section \ref{subsec:lemmaproof}. Theorem \ref{prop:gaussiansingvalscomplex} requires only a few small modifications to the arguments of the general case, and we defer the proof until Section \ref{subsec:gaussproof}.

\begin{lemma}[Tail bound for corner of the resolvent]
\label{lem:taildiagonalentries}
    Let $\delta \in \R$, let $U$ be a permutation matrix, and let $\bM_n$ satisfy Assumption \ref{assumption} with parameter $K>0$. Denote the upper-left $k\times k$ corner of  $(\delta i U - \bM_n )^{-1}$ by $\bN_k$. If $n \ge (k+2)^2$,
    \begin{equation}
    \label{eq:tailbound11entry}
        \P\left[\sigma_k(\bN_k) \geq 1/\eps \right]\leq (1 + k^2)\left(\sqrt{6}\crv^2 (2e\pi)^{3/2} K^3 n \frac{\eps^2}{|\delta|}\right)^{k^2}\E\left[\left(\|\bM_n\|^2 + \delta^2\right)^{k^2}\right].
    \end{equation}
\end{lemma}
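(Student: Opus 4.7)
The plan is to apply the Schur complement $\bN_k^{-1} = B_{11} - B_{12} B_{22}^{-1} B_{21}$, where $B = \delta i U - \bM_n$ and $B_{ij}$ are its blocks partitioned at the first $k$ rows and columns, and to bound the joint density of $\bN_k^{-1}$ conditionally on $B_{22}$, viewing it as a random element of $\C^{k\times k}\cong\R^{2k^2}$. Combining this density bound with the Lebesgue volume of the operator-norm $\eps$-ball in $\C^{k\times k}$, which scales as $\eps^{2k^2}$, yields the claimed probability bound (observing that $\sigma_k(\bN_k)\ge 1/\eps$ is equivalent to $\|\bN_k^{-1}\|_{\mathrm{op}}\le \eps$). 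The doubling of the exponent from $\eps^{k^2}$ (as in Theorem~\ref{thm:skboundgeneral}) to $\eps^{2k^2}$ is the whole point of the lemma and corresponds to exploiting two independent sources of randomness, one for each real dimension of the complex plane.

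The conditional density factorizes via the chain rule. Because $(\bM_n)^{(11)}$ is independent of the other blocks and appears in $\Re \bN_k^{-1} = -(\bM_n)^{(11)} - \Re(B_{12} B_{22}^{-1} B_{21})$ only as an additive translation, the convolution bound immediately pins $\density(\Re \bN_k^{-1}\mid \Im \bN_k^{-1}, B_{22}) \le (\sqrt n K)^{k^2}$. For the marginal density of $\Im \bN_k^{-1}$ given $B_{22}$, I would write $B_{22}^{-1} = Z_R + i Z_I$ and expand to find
\[
    \Im\!\left(B_{12} B_{22}^{-1} B_{21}\right) = (\bM_n)^{(12)}\, Z_I\, (\bM_n)^{(21)} + \text{(linear in the off-diagonal blocks)} + \text{const},
\]
which exactly fits the template of Theorem~\ref{thm:anticoncentration} with $\bX = -(\bM_n)^{(21)}$, $\bY = -((\bM_n)^{(12)})^{\T}$, and quadratic-form matrix $Z_I = \Im(B_{22}^{-1})$. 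That theorem then outputs a density bound controlled by $\min_j 1/(\sqrt{j-k+1}\,\sigma_j(Z_I))$, so the remaining task is a matching lower bound on $\sigma_j(\Im B_{22}^{-1})$ for a well-chosen $j$.

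The main obstacle is producing that lower bound, where the structure of the purely imaginary shift $\delta i U$ enters in earnest. Since $\bM_n$ and $\delta U$ are real, one has $B_{22}-\overline{B_{22}} = 2 i \delta U_{22}$, and the standard resolvent manipulation
\[
    \Im(B_{22}^{-1}) \;=\; \frac{B_{22}^{-1} - \overline{B_{22}}^{-1}}{2i} \;=\; -\delta\, B_{22}^{-1}\, U_{22}\, \overline{B_{22}}^{-1}
\]
expresses $Z_I$ as a triple product with $U_{22}$ in the middle. Combining the elementary inequality $\sigma_j(XYZ) \ge \sigma_j(Y)/(\|X^{-1}\|\,\|Z^{-1}\|)$ with Cauchy-type interlacing for $U_{22}$, an $(n-k)\times(n-k)$ submatrix of a unitary matrix (so $\sigma_j(U_{22}) = 1$ for all $j \le n-2k$), gives $\sigma_j(\Im B_{22}^{-1}) \ge |\delta|/\|B_{22}\|^2$ in that range. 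I would pick $j = n-2k$ and invoke the hypothesis $n \ge (k+2)^2$ to ensure $\sqrt{j-k+1} \ge \sqrt{n/6}$, producing the bound $1/(\sqrt{j-k+1}\,\sigma_j(Z_I)) \le \sqrt 6\,\|B_{22}\|^2/(|\delta|\sqrt n)$.

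Multiplying the two conditional density factors, multiplying again by the operator-norm $\eps$-ball volume $\pi^{k^2} k^{k^2} \eps^{2k^2}/\Gamma(k^2+1)$ (upper-bounded via its Frobenius superset and Stirling), and taking expectation over $B_{22}$ using $\|B_{22}\|^{2k^2} \le 2^{k^2}(\|\bM_n\|^2 + \delta^2)^{k^2}$ (since $\|U_{22}\|\le 1$, so $(\|\bM_n\|+|\delta|)^{2}\le 2(\|\bM_n\|^2+\delta^2)$) assembles the final bound. Tracking the constants through Stirling and this AM--GM splitting produces exactly the $\sqrt{6}\,\crv^2 (2e\pi)^{3/2}$ factor in \eqref{eq:tailbound11entry}; I would not belabor them until the end.
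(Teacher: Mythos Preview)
Your overall architecture matches the paper's: Schur complement, separation of real and imaginary parts, convolution bound on $\Re\bN_k^{-1}$ via the independence of $\bM_{11}$, and Theorem~\ref{thm:anticoncentration} applied to $\Im\bN_k^{-1}$ as a quadratic form in the off-diagonal blocks. The genuinely different step is your treatment of $\sigma_j(\Im B_{22}^{-1})$. The paper derives the explicit formula $\bY = -(\bM_{22} + U_{22}\bM_{22}^{-1}U_{22})^{-1}U_{22}\bM_{22}^{-1}$, observes that $U_{22}$ need not be invertible, and repairs this with a rank-$\le 2k$ correction $\hat U_{22} = U_{22} + E$; two applications of low-rank singular value interlacing then force the choice $j = n-5k$. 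Your resolvent-identity route $\Im B_{22}^{-1} = -\delta\, B_{22}^{-1}U_{22}\overline{B_{22}}^{-1}$ sidesteps the invertibility issue entirely---noninvertibility of $U_{22}$ is harmless once it sits as a middle factor---and the elementary inequality $\sigma_j(XYZ) \ge \sigma_j(Y)\sigma_{\min}(X)\sigma_{\min}(Z)$ together with Cauchy interlacing for $U_{22}$ yields the cleaner choice $j = n - 2k$. This is shorter and, since $n - 2k > n - 5k$, leaves more room under the hypothesis $n \ge (k+2)^2$ (one only needs $n > k^2 + 3k + 1$ for Theorem~\ref{thm:anticoncentration} to apply).

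One small caveat on your final constant: the step $\|B_{22}\|^2 \le 2(\|\bM_n\|^2 + \delta^2)$ injects an extra factor of $2$ that the paper's route avoids (its formula \eqref{eq:sigmajY-ub} lands directly on $|\delta|^{-1}\|\bM_n\|^2 + |\delta|$), so your constant comes out to $2\sqrt 6\,\crv^2(2e\pi)^{3/2}/\sqrt k$ rather than exactly $\sqrt 6\,\crv^2(2e\pi)^{3/2}$. For $k \ge 4$ your bound is already at least as good; for $k \in \{1,2,3\}$ you would need to sharpen the estimate $n - 3k + 1 \ge n/6$ to, say, $n - 3k + 1 \ge 7n/9$ (which also holds under $n \ge (k+2)^2$) to match or beat the stated constant.
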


We now show that Lemma \ref{lem:taildiagonalentries} implies Theorem \ref{thm:singvalscomplexshifts}. The proof of Lemma \ref{lem:taildiagonalentries} is deferred to Section \ref{subsec:lemmaproof} and is the main technical work of the proof.

\begin{proof}[Proof of Theorem \ref{thm:singvalscomplexshifts} assuming Lemma \ref{lem:taildiagonalentries}]
    Applying Corollary \ref{cor:linearalgnonpsd} and a union bound,
    \begin{align}
        \label{eq:unionbound}
        \P\left[\sigma_{n-k+1}(z - (A + \bM_n))\leq \eps \right] 
        &= \P\left[\sigma_k\left((z - (A + \bM_n))^{-1}\right) \geq 1/\eps \right] \nonumber \\ 
        &\leq \P\left[ \max_{S, T\subset [n], |S|=|T|=k} \sigma_k\left((z - (A + \bM_n))_{S, T}^{-1}\right) \geq  \frac{1}{k(n-k+1)\eps}\right] \nonumber \\ 
        &\leq \sum_{S, T\subset [n], |S|=|T|=k} \P\left[\sigma_k\left((z - (A + \bM_n))_{S, T}^{-1}\right) \geq  \frac{1}{k(n-k+1)\eps} \right].
    \end{align}
    Fixing $S, T\subset[n]$ of size $k$, there are permutation matrices $P$ and $Q$ such that 
    \begin{align*}
        (z - (A + \bM_n))^{-1}_{S,T}
        &= \left(Q^\T(z - (A +  \bM_n))^{-1}P\right)_{[k], [k]} \\
        &= \left(PQ^\T i\im z + P(\Re z - (A +  \bM_n))Q^\T\right)^{-1}_{[k],[k]}.
    \end{align*}
    As $PQ^\T$ is a permutation matrix and $P(\Re z - (A +  \bM_n))Q^\T$ satisfies Assumption \ref{assumption} with parameter $K>0$, we can apply Lemma \ref{lem:taildiagonalentries}. Defining
    \begin{equation}
        \label{eq:singvalscomplexshifts-const}
        C_{\ref{thm:singvalscomplexshifts}} \defeq \sqrt{6}\crv^2 (2e\pi)^{3/2},
    \end{equation}
    this gives
    \begin{align*}
        & \P\left[ \sigma_k\left((z-(A+ \bM_n) )^{-1}_{S, T}\right)\geq \frac{1}{k(n-k+1)\eps } \right] 
        \\ = & \P\left[\sigma_k\left(i\Im z  PQ^\T - P(\Re z - (A +  \bM_n))Q^\T\right)^{-1}_{[k],[k]} \ge \frac{1}{k(n-k+1)\eps}\right] \\ 
        \le & (1 + k^2)\left(C_{\ref{thm:singvalscomplexshifts}} K^3n \frac{ k^2 (n-k+1)^2\eps^2}{|\Im z|}\right)^{k^2} \E \left[ \left(\|P(\Re z - A +  M_n)Q^\T\|^2 + |\Im z|^2\right)^{k^2} \right] \\
        \le & (1 + k^2)\left(C_{\ref{thm:singvalscomplexshifts}} k^2 n^3 K^3 \frac{\eps^2}{|\Im z|}\right)^{k^2} \E\left[ \left(\|P(\Re z - (A + \bM_n))Q^\T\|^2 + |\Im z|^2\right)^{k^2}\right],
    \end{align*}
    where we have bounded $n-k+1 \le n$. By Jensen, $B_{\bM,s} \le B_{\bM,t}$ for any random matrix $\bM$ and $s\le t$, and thus expanding out with the binomial theorem gives $B_{A+\bM, s} \le B_{\bM,s} + \|A\|$ for every deterministic $A$. Finally,
    \begin{align*}
        \E\left[\left(\|P(\Re z - (A + \bM_n))Q^\T\|^2 + |\Im z|^2\right)^{k^2}\right] 
        &=  \E \left[\left(\|\Re z - (A + \bM_n)\|^2 + |\Im z|^2\right)^{k^2}\right] \\
        &= \sum_{r = 0}^{k^2}{\binom{k^2}{r}} B_{\Re z - (A + \bM_n), 2r}^{2r}|\Im z|^{2k^2-2r} \\
        &\le (B_{\Re z - (A +  \bM_n),2k^2}^2 + |\Im z|^2)^{k^2} \\
        &\le \left(( B_{\bM_n,2k^2} + \|A\| + |\Re z|)^2 + |\Im z|^2\right)^{k^2}.
    \end{align*}
    
    \color{darkgray}
    
    \color{black}
    We finish by combining this with the previous equation, and multiplying by ${\binom{n}{k}}^2$ for the union bound over pairs of size-$k$ subsets $S$ and $T$.
\end{proof}
\subsection{Proof of Lemma \ref{lem:taildiagonalentries}}
\label{subsec:lemmaproof}

In what follows we use the notation and assumptions of Lemma \ref{lem:taildiagonalentries}. In particular, $\bM_n$ satisfies Assumption \ref{assumption} with parameter $ K > 0$, $U$ is a permutation matrix, and $\delta \in \R$. Once again writing $\bN_k$ for the upper left $k\times k$ block of $(\delta i U + \bM_n)^{-1}$, we need to show that $\P[\|\bN_k^{-1}\| \le \eps] = O(\eps^{2k^2})$. One would expect this behavior if the real and imaginary parts of $\bN_k^{-1}$ were independent, and each had a density on $\R^{k\times k}$. We will not be quite so lucky, but we \textit{will} be able to separate the randomness in its real and imaginary parts, obtaining the $O(\eps^{2k^2})$ behavior by conditioning on some well-chosen entries of $\bM_n$. To make this precise, we will need some notation.

Let us write $\bM_n$ and $\delta U$ in the following block form:
\begin{equation}
\label{eq:blockdecomposition}
    \bM_n = 
    \begin{pmatrix}
        \bM_{11} &\bM_{12} \\
        \bM_{21} &\bM_{22}
    \end{pmatrix} 
    \quad \text{and} \quad 
    \delta U=\begin{pmatrix}
        U_{11} &U_{12} \\
        U_{21} &U_{22}
    \end{pmatrix}
\end{equation}
where $\bM_{11}$ and $U_{11}$ are $k\times k$ matrices. Define as well the $(n-k)\times (n-k)$ matrices $\bX$ and $\bY$ as
\begin{equation}
    \label{eq:defofXY}
    \bX \defeq \Re (\bM_{22}+i  U_{22})^{-1} \quad \text{and} \quad  \bY \defeq \Im (\bM_{22}+i  U_{22})^{-1} .
\end{equation}
Applying the Schur complement formula to the block decomposition in \eqref{eq:blockdecomposition}, we get
\begin{align*}
    \bN_k^{-1} &= \bM_{11}+iU_{11}- (\bM_{12}+iU_{12})(\bM_{22}+i U_{22} )^{-1}(\bM_{21}+iU_{21})
    \\ &=\bM_{11}+iU_{11} - (\bM_{12}+iU_{12})(\bX+i\bY)(\bM_{21}+iU_{21}),
\end{align*}
meaning that
\begin{align}
    \Re \bN_k^{-1} &= \bM_{11} - \bM_{12}\bX \bM_{21} + U_{12}\bY \bM_{21} - \bM_{12}\bY U_{21} + U_{12}\bX U_{21} \label{eq:realpart}\\
    \Im \bN_k^{-1} &= U_{11} -\bM_{12} \bY \bM_{21} -\bM_{12} \bX U_{21} - U_{12} \bX \bM_{21} + U_{12} \bY U_{21}. \label{eq:imagpart}
\end{align}

Examining these two formulae, and recalling that the entries of $\bM_n$ are independent and have a joint density on $\R^{n\times n}$, we arrive at the key observation of this section:
\begin{observation}
    \label{obs:mainobs}
    The imaginary part $\Im \bN_k^{-1}$ is independent of $\bM_{11}$. Moreover, conditional on $\bM_{12},\bM_{21}$ and $\bM_{22}$, the real part $\Re \bN_k^{-1}$ has independent entries, each with density on $\R$ bounded by $K\sqrt n$.
\end{observation}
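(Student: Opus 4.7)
The plan is to read off both claims essentially by inspection from the explicit formulas \eqref{eq:realpart} and \eqref{eq:imagpart}, using only the independence of the entries of $\bM_n$ guaranteed by Assumption \ref{assumption}. There is no serious obstacle: the observation is bookkeeping about which blocks of $\bM_n$ appear on each side, and the real content is the algebraic separation already achieved by the Schur complement computation above.

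First I would note that the auxiliary matrices $\bX$ and $\bY$ defined in \eqref{eq:defofXY} are measurable functions of the block $\bM_{22}$ alone (together with the deterministic block $U_{22}$). Inspecting \eqref{eq:imagpart}, every summand of $\Im \bN_k^{-1}$ involves only the blocks $\bM_{12}$, $\bM_{21}$, $\bM_{22}$ (the last entering through $\bX$ and $\bY$) and the deterministic entries of $U$; crucially, $\bM_{11}$ never appears. By Assumption \ref{assumption} the entries of $\bM_n$ are jointly independent on $\R$, so the family of entries making up $(\bM_{12}, \bM_{21}, \bM_{22})$ is independent of the entries of $\bM_{11}$, and hence so is any measurable function of the former. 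This yields the first claim.

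For the second claim I would condition on the $\sigma$-algebra $\mathcal{F}$ generated by $(\bM_{12}, \bM_{21}, \bM_{22})$. Under this conditioning both $\bX$ and $\bY$ are deterministic, and so are the four summands $-\bM_{12}\bX \bM_{21}$, $U_{12}\bY \bM_{21}$, $-\bM_{12}\bY U_{21}$, and $U_{12}\bX U_{21}$ on the right of \eqref{eq:realpart}. Writing $C$ for their sum, one has the conditional identity
\[
\Re \bN_k^{-1} = \bM_{11} + C.
\]
Because $\bM_{11}$ is independent of $\mathcal{F}$, its conditional law equals its marginal law; Assumption \ref{assumption} then tells us this is the law of a $k\times k$ matrix with independent entries each having density on $\R$ bounded a.e. by $K\sqrt{n}$. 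Adding the $\mathcal{F}$-measurable (hence conditionally deterministic) shift $C$ preserves both joint independence of entries and the pointwise density bound, establishing the second claim.
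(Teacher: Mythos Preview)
Your proof is correct and matches the paper's own reasoning, which consists of nothing more than the sentence ``Examining these two formulae, and recalling that the entries of $\bM_n$ are independent and have a joint density on $\R^{n\times n}$, we arrive at the key observation of this section.'' You have simply spelled out the inspection of \eqref{eq:realpart} and \eqref{eq:imagpart} that the paper leaves implicit.
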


\noindent Writing this conditioning explicitly,
\begin{align}
    \P\left[\sigma_k(\bN_k) \ge 1/\eps \right] 
    &= \P\left[\|\bN_k^{-1}\| \le \eps \right] \nonumber \\
    &\le \P\left[\|\Re \bN_k^{-1} + i \Im \bN_k^{-1}\|_F \le \eps \sqrt k\right] \nonumber \\
    &\le \P\left[\|\Re \bN_k^{-1}\|_F \le \eps, \|\Im \bN_k^{-1}\|_F \le \eps\sqrt k \right] \nonumber \\
    &= \E \, \E\left[\indicator{\|\Re \bN_k^{-1} \|_F \le \eps\sqrt k} \indicator{\|\Im \bN_k^{-1}\|_F \le \eps\sqrt k} \,\middle|\, \bM_{12},\bM_{21},\bM_{22}\right] \nonumber \\
    &= \E\left[\indicator{\|\Im \bN_k^{-1}\|_F \le \eps\sqrt k} \E\left[\indicator{\|\Re \bN_k^{-1}\|_F \le \eps\sqrt k} \md \bM_{12},\bM_{21},\bM_{22}\right]\right]. \label{eq:nk-conditional}
\end{align}
We can bound the inner conditional expectation using Observation \ref{obs:mainobs}: 
\begin{align}
    \E\left[\indicator{\|\Re \bN_k^{-1}\|_F \le \eps\sqrt k} \md \bM_{12},\bM_{21},\bM_{22}\right]
    &\le \frac{(\sqrt{\pi kn}K\eps)^{k^2}}{\Gamma(k^2/2 + 1)} \le \left(\frac{\sqrt{2e\pi n}K\eps}{\sqrt k}\right)^{k^2} \label{eq:m11-bound}
\end{align}
In the final two steps we have used the volume of a Frobenius norm ball in $\R^{k\times k}$, and Stirling's approximation. Plugging into \eqref{eq:nk-conditional} gives
$$
    \P\left[\sigma_k(\bN_k) \ge 1/\eps\right] \le \P\left[\|\Im \bN_k^{-1}\|_F \le \eps\sqrt k\right] \left(\frac{\sqrt{2e\pi n}K\eps}{\sqrt k}\right)^{k^2},
$$
and we now turn to the more serious task of the requisite small-ball probability estimate for $\Im \bN_k^{-1}$. This calculation is facilitated by a second key observation, which is an immediate consequence of the full expression $\eqref{eq:imagpart}$ for $\Im \bN_k^{-1}$.

\begin{observation}
    Conditional on $\bM_{22}$, the imaginary part $\Im \bN_k^{-1}$ is a quadratic function in $\bM_{12}$ and $\bM_{21}$, of the type studied in Section \ref{sec:anticoncentration}.
\end{observation}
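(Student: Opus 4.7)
The plan is to verify the observation by direct inspection of formula \eqref{eq:imagpart}, since conditioning on $\bM_{22}$ freezes exactly the factors that are responsible for nonlinearity in the rest of $\bM_n$. First, recall from \eqref{eq:defofXY} that $\bX = \Re(\bM_{22}+iU_{22})^{-1}$ and $\bY = \Im(\bM_{22}+iU_{22})^{-1}$ are measurable functions of $\bM_{22}$ alone, and are therefore deterministic $(n-k)\times(n-k)$ real matrices once we condition on $\bM_{22}$; the four blocks $U_{11},U_{12},U_{21},U_{22}$ of $\delta U$ are deterministic from the outset.

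Next, I read off each of the five summands on the right-hand side of \eqref{eq:imagpart}: the term $U_{11}$ is a deterministic constant; $U_{12}\bY U_{21}$ is another deterministic constant (conditional on $\bM_{22}$); $-\bM_{12}\bX U_{21}$ is linear in $\bM_{12}$ with deterministic right factor $\bX U_{21}$; $-U_{12}\bX \bM_{21}$ is linear in $\bM_{21}$ with deterministic left factor $U_{12}\bX$; and $-\bM_{12}\bY\bM_{21}$ is bilinear in $(\bM_{12},\bM_{21})$ with deterministic middle factor $\bY$. Hence $\Im\bN_k^{-1}$ is a polynomial of total degree two in the entries of $\bM_{12}$ and $\bM_{21}$.

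To put this into the standard form $q(\bX',\bY') = (\bX')^\T Z\,\bY' + (\bX')^\T U + V^\T\bY' + W$ from Theorem \ref{thm:anticoncentration} (with the ambient dimension taken to be $n-k$), I set $\bX' \defeq \bM_{12}^\T$ and $\bY' \defeq \bM_{21}$, both in $\R^{(n-k)\times k}$, together with the deterministic data
\[
    Z \defeq -\bY,\qquad U \defeq -\bX U_{21},\qquad V \defeq -\bX^\T U_{12}^\T,\qquad W \defeq U_{11} + U_{12}\bY U_{21}.
\]
Conditional on $\bM_{22}$, the blocks $\bM_{12}$ and $\bM_{21}$ are independent (by independence of entries of $\bM_n$) with entries whose densities on $\R$ are still bounded a.e.\ by $\sqrt n K$, so the hypotheses of Theorem \ref{thm:anticoncentration} hold with this choice.

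There is no serious obstacle here: the content is purely algebraic, and amounts to grouping the terms of \eqref{eq:imagpart} by their joint degree in $\bM_{12}$ and $\bM_{21}$ once $\bX,\bY$ are frozen. The only point worth flagging is the need to transpose $\bM_{12}$ so that the two random arguments share a common "tall" shape $(n-k)\times k$, as required by the statement of Theorem \ref{thm:anticoncentration}; once this cosmetic identification is made, the observation follows.
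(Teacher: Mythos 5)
Your verification is correct and is essentially the same as the paper's, which treats this observation as an immediate consequence of the displayed formula for $\Im \bN_k^{-1}$: once $\bM_{22}$ is conditioned on, $\bX$ and $\bY$ are frozen, and the expression is a quadratic form in $\bM_{12}$ and $\bM_{21}$ exactly as in Theorem \ref{thm:anticoncentration}. Your explicit identification $Z=-\bY$, $U=-\bX U_{21}$, $V=-\bX^\T U_{12}^\T$, $W=U_{11}+U_{12}\bY U_{21}$ (with $\bM_{12}^\T$ and $\bM_{21}$ as the two $(n-k)\times k$ random arguments) matches how the paper subsequently applies the anticoncentration theorem with $Z=\bY$ up to sign, which is immaterial since $Z$ is arbitrary.
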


In particular, for any deterministic $(n-k)\times(n-k)$ matrices $Y$ and $X$, and $j$ satisfying $n-k \ge j > k^2 + k + 1$, Theorem \ref{thm:anticoncentration} implies 
\begin{align}
    &\P\left[\|U_{12} - \bM_{12}Y\bM_{21} - \bM_{12} X U_{21} - U_{12} X \bM_{21} + U_{12}Y U_{21}\|_F \le \eps\sqrt k \right] \nonumber \\
    &\qquad \qquad \qquad \qquad \leq (1 + k^2)\left(\frac{\crv^2K^2 n \sqrt{2e\pi k}}{\sqrt{j-k+1}\sigma_j(\bY)}\right)^{k^2} \left(\frac{\sqrt{2e\pi}\eps}{\sqrt k}\right)^{k^2} \nonumber \\
    &\qquad \qquad \qquad \qquad = (1 + k^2)\left(\frac{\crv^2K^2n\cdot 2e\pi}{\sqrt{j-k+1}\sigma_j(\bY)}\right)^{k^2}, \label{eq:deterministicboundanitcon}
\end{align}
(again using the volume of a Frobenius norm ball). Since $\bY$ depends only on the randomness in $\bM_{22}$, and is thus independent of $\bM_{12}$ and $\bM_{21}$, conditioning and integrating over $\bM_{22}$ gives us
\begin{equation}
    \label{eq:imag-bound}
    \P\left[\|\Im \bN_k^{-1} \| \le \eps \right] \le (1 + k^2)\left(\frac{\crv^2K^2n \cdot 2e\pi}{\sqrt{j-k+1}}\right)^{k^2} \E\left[\sigma_j(\bY)^{-k^2}\right].
\end{equation}

To finish the proof, we now need to bound this remaining expectation for a suitable choice of $j$, satisfying $n-k \ge j > k^2 + k + 1$. In \eqref{eq:defofXY}, we defined $\bY = \Im (\bM_{22} + i U_{22})^{-1}$, and we now require a more explicit formula. Using the representation of $\C^{(n-1)\times(n-1)}$ as a set of block matrices in $\R^{2(n-1)\times 2(n-1)}$, and again applying the Schur complement formula,
$$
    \begin{pmatrix}
        \bX & -\bY  \\
        \bY & \bX
    \end{pmatrix} =
    \begin{pmatrix}
       \bM_{22}  & -U_{22}  \\
       U_{22}  & \bM_{22}
    \end{pmatrix}^{-1} = \begin{pmatrix}
      (\bM_{22} + U_{22}\bM_{22}^{-1}U_{22})^{-1} & (\bM_{22} + U_{2,2}\bM_{22}^{-1}U_{22})^{-1}U_{22} \bM_{22}^{-1}  \\
     -(\bM_{22} + U_{22}\bM_{22}^{-1}U_{22})^{-1}U_{22} \bM_{22}^{-1} & (\bM_{2,2} + U_{22}\bM_{22}^{-1}U_{22})^{-1}
    \end{pmatrix} 
$$
and hence 
\begin{equation}
    \label{eq:XYformulas}
     \bY =  -(\bM_{22} + U_{22}\bM_{22}^{-1} U_{22} )^{-1}U_{22} \bM_{22}^{-1}.
\end{equation}
If we could invert $U_{22}$, we could rewrite this as $-(\bM_{22}U_{22}^{-1}\bM_{22} + U_{22})^{-1}$ and set $j = n-k$, giving
$$
    \sigma_{n-k}\left(\bY\right)^{-k^2} = \|\bM_{22}U_{22}\bM_{22} + U_{22}\|^{k^2} \le \left(|\delta|^{-1}\|\bM_{22}\|^2 +|\delta|\right)^{k^2} \le \left(|\delta|^{-1}\|\bM_n\|^2 + |\delta|\right)^{k^2}.
$$
However, not every principal block of a permutation matrix is invertible, so we will need to work a bit harder.
 
 

Since $U$ is a permutation matrix, and $U_{22}$ is an $(n-k)\times (n-k)$ block of $\delta U$, by the usual interlacing of singular values for submatrices \cite[Corollary 7.3.6]{horn2012matrix}, we can be sure that $\sigma_1(U_{22})= \cdots = \sigma_{n-2k}(U_{22})=|\delta|.$ Hence, there exists a matrix $E$ of rank at most $2k$ such that $\hat{U}_{22} \defeq U_{22}+E$ is invertible, with all singular values equal to $|\delta|$. We can therefore write
$$
    \bY =-(\bM_{22} + U_{22}\bM_{22}^{-1} U_{22} )^{-1}U_{22} \bM_{22}^{-1} = -(\bM_{22} + \hat{U}_{22}\bM_{22}^{-1} U_{22} + \bE_1)^{-1}\hat{U}_{22} \bM_{22}^{-1}+\bE_2  
$$
where $\bE_1 = -E\bM_{22}^{-1}U_{22}$ and $\bE_2=-(\bM_{22}- U_{22}\bM_{22}^{-1} U_{22} )^{-1}E\bM_{22}^{-1}$. Since $\rank(\bE_2)\leq \rank(E) \leq 2k$, interlacing of singular values upon low-rank updates \cite[Theorem 1]{thompson1976behavior} ensures
    \begin{equation}
    \label{eq:eq21}
    \sigma_{j}(\bY)\geq \sigma_{j+2k}\left((\bM_{22}+ \hat{U}_{22}\bM_{22}^{-1} U_{22} +\bE_1)^{-1}\hat{U}_{22} \bM_{22}^{-1}\right).
\end{equation}
On the other hand 
\begin{equation}
    \label{eq:eq22}
    (\bM_{22}+ \hat{U}_{22}\bM_{22}^{-1} U_{22} +\bE_1)^{-1}\hat{U}_{22} \bM_{22}^{-1} = (\bM_{22}\hat{U}_{22}^{-1}\bM_{22}+ U_{22}+M_{22}\hat{U}_{22}^{-1}\bE_1)^{-1}, 
\end{equation}
and since $\rank(\bM_{22} \hat{U}^{-1}_{22}\bE_1)\leq \rank(\bE_1)\leq \rank(E)\leq 2k$, a further application of the low-rank update bound tells us
\begin{equation}
    \label{eq:eq23}    
    \sigma_{j+2k}\left((\bM_{22}\hat{U}_{22}^{-1}\bM_{22}+ U_{22}+\bM_{22}\hat{U}_{22}^{-1}\bE_1)^{-1} \right)\geq \sigma_{j+4k} \left((\bM_{22}\hat{U}_{22}^{-1}\bM_{22}+ U_{22})^{-1}\right). 
\end{equation}

Putting together \eqref{eq:eq21}, \eqref{eq:eq22}, and \eqref{eq:eq23}, we get
$$
    \sigma_j(\bY)\geq \sigma_{j+4k} \left((\bM_{22}\hat{U}_{22}^{-1}\bM_{22}+ U_{22})^{-1}\right),
$$
and finally, setting $j = n-5k$, and recalling $\|U_{2,2}\| = |\delta|$, $\|\hat U_{2,2}^{-1}\| = |\delta|^{-1}$, and $\|\bM_{22}\| \le \|\bM_n\|$, we have
\begin{align}
    \sigma_{n-5k}(\bY)^{-k^2} 
    \leq \left\|\bM_{22}\hat{U}_{22}^{-1}\bM_{22}+ U_{22}\right\|^{k^2}
    \leq \left(|\delta|^{-1}\|\bM_n\|^2 + |\delta|\right)^{k^2} \label{eq:sigmajY-ub}
\end{align}
We now assemble our work so far:
\begin{align*}
    \P\left[\sigma_k(\bN_k) \ge 1/\eps\right] 
    &\le \P\left[\|\Im \bN_k^{-1}\| \le \eps\right] \left(\frac{\sqrt{2e\pi n} K \eps}{\sqrt k}\right)^{k^2} \\
    &\le (1 + k^2)\left(\frac{\crv^2 K^2n \cdot 2e\pi \cdot \eps }{\sqrt{j-k+1}}\right)^{k^2}\left(\frac{\sqrt{2e\pi n} K \eps}{\sqrt k}\right)^{k^2}\E\left[\sigma_j(\bY)^{-k^2}\right] & & \forall n-k \ge j \ge k^2 + k + 1 \\
    &\le (1 + k^2)\left(\frac{\crv^2 K^3 (2e\pi n)^{3/2} }{\sqrt{k(n-6k+1)}}\right)^{k^2}\left(\frac{\eps^2}{|\delta|}\right)^{k^2}\E\left(\|\bM_n\| + \delta^2\right)^{k^2} & & \text{setting $j=n-5k$}.
\end{align*}
For this to go through, we need $n \ge \max\{6k,(k+2)^2\} = (k+2)^2$. Finally, we can use $1/(n-6k+1) \le 6k/n$ to obtain the final result.

\subsection{Proof of Theorem \ref{prop:gaussiansingvalscomplex}}
\label{subsec:gaussproof}

We will first modify the proof of Lemma \ref{lem:taildiagonalentries}, referring back to the argument in the prior section. In order to perform these modifications, set $K = 1/\gamma$, and think of $\bM_n = K^{-1}\bG_n$. As above, $\delta \in \R$ is a real number, $U$ is a permutation, and we write $\bN_k$ for the upper left $k\times k$ block of $(\delta i U - \bM_n)^{-1}$. In \eqref{eq:m11-bound}, using that the density of each entry of $\bM_n$ is bounded by $(2\pi)^{-1/2}K\sqrt n$, we find
$$
    \E\left[\indicator{\|\Re N_k^{-1}\| \le \eps} \md M_{12},M_{21}M_{22}\right] \le \left(\frac{\sqrt{en} K \eps}{\sqrt k}\right)^{k^2}.
$$
In \eqref{eq:deterministicboundanitcon} and \eqref{eq:imag-bound}, swapping Theorem \ref{thm:anticoncentration-gaussian} for Theorem \ref{thm:anticoncentration}, we have that for any $n-k \ge j > 2k$,
$$
    \P\left[\|\Im \bN_k^{-1}\| \le \eps\right] \le \left(\frac{K^2 n}{2\sqrt{j-2k+1}}\right)^{k^2}\E \left[\sigma_j(\bY)^{-k^2}\right];
$$
finally, in $\eqref{eq:sigmajY-ub}$ if we now set $j = n-5k$, we have
$$
    \E\left[\sigma_{n-5k}(\bY)^{-k^2}\right] \le \E\left[\left(|\delta|^{-1}\|M\|^2 + |\delta|\right)^{k^2}\right].
$$
Putting all this together, for any $k$ satisfying $n\ge 7k$,
\begin{align}
    \label{eq:diagonal-block-gaussian}
    \P\left[\sigma_k(\bN_k)\ge 1/\eps\right] &\le \left(\frac{\sqrt{7e} K^3n}{2}\frac{\eps^2}{|\delta|}\right)^{k^2}\E\left[\left(\|\bM_n\|^2 + \delta^2\right)^{k^2}\right].
\end{align}
Now, let $z\in \C$, and continue as in the proof of Theorem \ref{thm:singvalscomplexshifts} from Lemma \ref{lem:taildiagonalentries}. Recalling $K = 1/\gamma$, and substituting \eqref{eq:diagonal-block-gaussian} in place of \eqref{eq:tailbound11entry}, we obtain
\begin{align*}
    \P\left[\sigma_k(z - A - \gamma \bG_n) \le \eps \right] 
    &\le {\binom{n}{k}}^2 \left(\frac{\sqrt{7e} k^2 n^3}{2\gamma^3} \left(\left(\gamma B_{\bG_n,2k^2} + \|A\| + |\Re z|\right)^2 + |\Im z|^2\right)\frac{\eps^2}{|\Im z|}\right)^{k^2}.
\end{align*}

\section{Lower Bounds on the Minimum Eigenvalue Gap}
\label{sec:gaps}

This section is devoted to several results regarding eigenvalue gaps of real random matrices with independent entries. Below we state the main result of this section. 

\begin{rtheorem}{\ref{thm:gaps}}
\label{thm:mainofgaps}
    Let $n\geq 16$, $A \in \R^{n\times n}$ be deterministic, and $\bM_n$ be a random matrix satisfying Assumption \ref{assumption} with parameter $K>0$. For any $0 < \gamma < K$ and $R>1$,
    $$
        \P\left[\gap(A + \gamma \bM_n) \le s \right] \le C_{\ref{thm:gaps}} R^2\left(\gamma B_{\bM_n,8} + \|A\| + R\right) (K/\gamma)^{5/2} n^4 s^{1/3} + \P\left[\|A + \bM_n\| \ge R\right],
    $$
    where $C_{\ref{thm:gaps}}$ is a universal constant defined in equation \eqref{eq:gaps-const-def}. Moreover, if $\bG_n$ is an $n\times n$ real Ginibre and $0 < \gamma < 1$, then
    $$
        \P\left[\gap(A + \gamma \bG_n) \le s \right] \le  15 \left(\|A\| + 7\right)^3 n^3\gamma^{-5/2}s^{1/3} + e^{-2n}.
    $$
\end{rtheorem}

As discussed in the introductory material, our proof of this theorem hinges on a deterministic fact: one can detect eigenvalues of a matrix $M$ close to a point $z \in \C$ simply by studying the smallest two singular values of $z - M$. This fact is a direct consequence of the log-majorization theorem \cite[Theorem 3.3.2]{horn1994topics}, which implies $\sigma_n \sigma_{n-1} \le |\lambda_n \lambda_{n-1}|$ for any matrix. 
We now state this carefully.

\begin{lemma} \label{lem:gaps}
    Let $M \in \C^{n\times n}$ be any complex matrix and $z\in \C$. If $M$ has two eigenvalues in $D(z, r)$, then $$\sigma_n(z - M)\sigma_{n-1}(z - M) \le r^2.$$
\end{lemma}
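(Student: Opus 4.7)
The plan is to deduce the statement from Weyl's majorant theorem (the log-majorization of eigenvalues by singular values), which is exactly the Horn--Johnson reference the authors cite. That theorem says that for any $N \in \C^{n\times n}$, with singular values $\sigma_1(N) \ge \cdots \ge \sigma_n(N)$ and eigenvalues ordered so that $|\lambda_1(N)| \ge \cdots \ge |\lambda_n(N)|$,
\[
    \prod_{i=1}^k |\lambda_i(N)| \;\le\; \prod_{i=1}^k \sigma_i(N) \qquad \text{for } k = 1, \dots, n,
\]
with equality at $k=n$ since both sides equal $|\det N|$. I would apply this with $N := z - M$.

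First I would translate the hypothesis into eigenvalue language. The spectrum of $z - M$ is precisely $\{z - \mu : \mu \in \mathrm{spec}(M)\}$, so two eigenvalues of $M$ lying in $D(z,r)$ (counted with algebraic multiplicity) correspond to two eigenvalues of $z - M$ of absolute value at most $r$. These are necessarily the two smallest-modulus eigenvalues of $z-M$ (or are tied with them), hence $|\lambda_{n-1}(z-M)| \le r$ and $|\lambda_n(z-M)| \le r$.

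Second, I would divide the $k = n$ equality by the $k = n-2$ inequality in Weyl's majorant to isolate the bottom two factors:
\[
    \sigma_{n-1}(z-M)\,\sigma_n(z-M) \;=\; \frac{\prod_{i=1}^n \sigma_i(z-M)}{\prod_{i=1}^{n-2}\sigma_i(z-M)} \;\le\; \frac{\prod_{i=1}^n |\lambda_i(z-M)|}{\prod_{i=1}^{n-2}|\lambda_i(z-M)|} \;=\; |\lambda_{n-1}(z-M)|\,|\lambda_n(z-M)| \;\le\; r^2,
\]
which is the claimed bound. The degenerate case $\det(z-M) = 0$ forces $\sigma_n(z-M) = 0$ so the inequality is trivial; if $\prod_{i=1}^{n-2}|\lambda_i(z-M)| = 0$ while $\det(z-M)\ne 0$ then some $|\lambda_i(z-M)|$ with $i\le n-2$ vanishes and yet $|\lambda_n(z-M)|>0$, a contradiction, so the division is legitimate whenever it is needed.

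There is no real obstacle: once Weyl's majorant theorem is invoked, the proof is just index bookkeeping plus the observation that nearby eigenvalues of $M$ produce small-modulus eigenvalues of $z-M$. The only conceptual content is that log-majorization is what forces \emph{both} $\sigma_n$ and $\sigma_{n-1}$ of $z - M$ to be simultaneously small — an individual bound on $\sigma_n(z-M)$ would not be enough.
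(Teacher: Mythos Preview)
Your proof is correct and follows exactly the approach the paper indicates: the paper states this lemma as ``a direct consequence of the log-majorization theorem \cite[Theorem 3.3.2]{horn1994topics}, which implies $\sigma_n \sigma_{n-1} \le |\lambda_n \lambda_{n-1}|$ for any matrix,'' and gives no further argument. Your write-up simply spells out this consequence in detail, including the handling of the degenerate case.
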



 Hence, in order to obtain a tail bound for eigenvalue gaps of a random matrix, it is enough to obtain appropriate tail bounds for the two smallest singular values of its shifts. We use $D(z_0,r)=\{z\in \C:|z-z_0|\le r\}$ to denote a closed disk in the complex plane. 

\begin{proof}[Proof of Theorem \ref{thm:gaps} ]
    For most of the proof, let us absorb $\gamma$ into the constant $K$---the condition $\gamma < 1/K$ will not be relevant until the end. Lemma \ref{lem:gaps} in hand, we will use a simple net argument: choose a covering of the region $D(0,R) \subset \C$ with disks, with the property that any pair of eigenvalues at distance less than $s$ must both lie in at least one of them. Our only complication is that our tail bounds on the singular values of $z - (A + \bM_n)$ depend on the shift $z$: on the real line they are governed by Theorem \ref{thm:skboundgeneral} , and away from it by Theorem \ref{thm:singvalscomplexshifts}. 
    
    To handle this, we will use a somewhat elaborate combination of nets, exploiting the fact that real matrices have conjugate-symmetric spectra. Specifically, this symmetry means that we can think of small gaps as arising in one of three different ways: gaps in which at least one eigenvalue is real, gaps between a conjugate pair of eigenvalues with small imaginary part, and gaps between complex eigenvalues away from the real line. Thus motivated, let us define, for any matrix $M \in \R^{n\times n}$ and $\delta > 0$,
    \begin{align*}
        \gap_{\R}(M) &\defeq \min \left\{\left|\lambda_i(M) - \lambda_j(M)\right| : i\neq j \text{ and } \lambda_i(M) \in \R \right\} \\
        \Im_{\min}(M) &\defeq \min\left\{|\Im \lambda_i(M)| : \lambda_i(M) \notin \R \right\} \\
        \gap_{\Im \ge \delta}(M) &\defeq \min \left\{ \left|\lambda_i(M) - \lambda_j(M) \right| : i\neq j \text{ and } |\Im \lambda_i(M)|,|\Im \lambda_j(M)| \ge \delta \right\},
    \end{align*}
    and observe that if $\delta > 0$, 
    $$
        \{\gap(A + \bM_n) \le s \} = \{\gap_{\R}(A + \bM_n) \le s\} \cup \{\Im_{\min}(A + \bM_n) \le \delta\} \cup \{\gap_{\Im \ge \delta}(A + \bM_n) \le s\}.$$
    We will set up a separate net to union bound each of these events: let
    \begin{align*}
        \calN_\eta^{\R} &\defeq \{j\eta : j \in \mathbb{Z}\} \cap [-R,R] \\
        \calN_{\delta,\eta}^{\C} &\defeq \{\eta j + i(\delta + \eta k) : j, k \in \mathbb{Z} \} \cap B(0,R).
    \end{align*}    
    Then, judiciously choosing the spacing and radii of disks, for any $\delta>0$ we have:
    \begin{equation}
    \label{eq:union-bound-first}
    \begin{aligned}
        \P\left[\gap(A + \bM_n) \le s \right]
        &\le \sum_{z \in \calN_{2s}^{\R}}\P\left[|\Lambda(A + \bM_n) \cap D(z,3s/2)| \ge 2 \right] \\
        &\qquad + \sum_{z \in \calN_{\delta}^{\R}}P\left[|\Lambda(A + \bM_n) \cap D(z,\sqrt{2}\delta)| \ge 2 \right] \\
        &\qquad + \sum_{z \in \calN_{\delta,s}^{\C}}\P\left[\Lambda(A + \bM_n) \cap D(z,\sqrt{5/4} s)| \ge 2 \right] \\
        &\qquad + \P\left[\|A + \bM_n\| \ge R \right].
    \end{aligned}
    \end{equation}
    The first line controls $\gap_{\R}$, the second one $\Im_{\min}$, the third one $\gap_{\Im \ge \delta}$, and the final one the event that some eigenvalue lies outside the region covered by our net. One could further optimize the above in the pursuit of tighter constants, but we optimize for simplicity. The remainder of the proof consists of bounding these events with Theorems \ref{thm:skboundgeneral} and \ref{thm:singvalscomplexshifts}---the constants and exponents become somewhat unwieldy, and on a first reading we recommend following the argument at a high level to avoid being bogged down in technicalities. The Gaussian case is quite similar, and we defer it to Appendix \ref{sec:gaussian}.
\bigskip

\noindent \emph{Step 1: Gaps on the Real Line.} We first must bound the probability
$$
    \P\left[|\Lambda(A + \bM_n) \cap D(z,3s/2)| \ge 2\right]
$$
for $z \in \R$. To use Lemma \ref{lem:gaps}, we need tail bounds for the \textit{product} of the two smallest singular values of $z - (A + \bM_n)$, whereas Theorem \ref{thm:skboundgeneral} concerns individual singular values. To get around this, note that for every $z \in \R$ and $x > 0$,
\begin{align*}
    \P\left[\sigma_n(z - (A + \bM_n))\sigma_{n-1}(z - (A + \bM_n)) \le r^2 \right]  
    &\le \P\left[\sigma_n(A + \bM_n) \le rx\right] + \P\left[\sigma_{n-1}(A + \bM_n) \le r/x\right] \\
    &\le \sqrt{2} \crv K n^2 rx + 4\crv^4 K^4 n^6 r^4/x^4.
\end{align*}
Optimizing in $x$, we have
\begin{equation}
    \label{eq:szarekbound}
    \P\left[|\Lambda(A + \bM_n) \cap D(z,r)|\ge 2 \right] \le \left(4^{1/5} + 4^{-4/5}\right)\left(\sqrt 2 \crv K r\right)^{8/5}n^{14/5} \le 3n^{14/5}(\crv K r)^{8/5}.
\end{equation}
The rough bound $\left|\calN_{2s}^{\R}\right| \le (R/s + 1) \le 3R/2s$ now gives
\begin{align}
    \sum_{z \in \mathcal{N}_s^{\R}}\P\left[|\Lambda(A + \bM_n) \cap D(z, 3s/2)| \ge 2 \right] \nonumber 
    &\le \left|\mathcal{N}_s^{\R}\right| \cdot 3n^{14/5}(3\crv K s/2)^{8/5} \nonumber \\
    &\le 9R(\crv K)^{8/5}n^{14/5}s^{3/5} \label{eq:realgaps}
\end{align}

\bigskip

\noindent \emph{Step 2: Eigenvalues Near the Real Line.} 
Using \eqref{eq:szarekbound} and imitating the remainder of Step 1,
\begin{equation}
    \label{eq:eigsnearrealline}
    \sum_{z \in \calN_{\delta}^{\R}}P\left[|\Lambda(A + \bM_n) \cap D(z,\sqrt{2}\delta)| \ge 2 \right] \le 8 R (\crv K )^{8/5}n^{14/5} \delta^{3/5}
 \end{equation}
This directly implies a stand-alone tail bound on $\Im_{\min}$, which we record for use in Section \ref{sec:regularization},:
\begin{equation}
    \label{eq:im-min-standalone}
    \P\left[\Im_{\min}(A + \bM_n) \le \delta\right] \le 8 R (\crv K )^{8/5}n^{14/5} \delta^{3/5} + \P[\|\bM_n\|\geq R].
\end{equation}

\bigskip

\noindent \emph{Step 3: Eigenvalues Away from the Real Line.} We finally turn to non-real $z$. As in Step 1, observe that for any $z\in \C\setminus \R$, $r>0$, and $n \ge 16$, Theorem \ref{thm:singvalscomplexshifts} implies
\begin{align}
    \P\left[|\|\Lambda(A + \bM_n) \cap D(z,r)| \ge 2 \right] 
    &\le \min_{x >0}\left\{ \P\left[\sigma_n(A + \bM_n) \le rx\right] + \P\left[\sigma_{n-1}(A + \bM_n) \le r/x\right]\right\} \nonumber \\
    &\le \min_{x > 0} \left\{2C_{\ref{thm:singvalscomplexshifts}}K^3n^5\left(\left(B_{\bM_n,2} + \|A\| + |\Re z|\right)^2 + |\Im z|^2\right) \frac{(rx)^2}{|\Im z|} \right. \nonumber \\
    &\qquad\qquad \left. + 640 C_{\ref{thm:singvalscomplexshifts}}^4 K^{12}n^{14}\left(\left(B_{\bM_n,8} + \|A\| + |\Re z|\right)^2 + |\Im z|^2\right)^4 \frac{r^8}{x^8|\Im z|^4}\right\} \nonumber \\
    &\le C_{(\ref{eq:disksaway})} \left(\frac{\left(B_{\bM_n,8} + \|A\| + |\Re z|\right)^2 + |\Im z|^2}{|\Im z|}\right)^{8/5} K^{24/5}r^{16/5} n^{34/5} \label{eq:disksaway}
\end{align}
where we have used $B_{\bM_n,1} \le B_{\bM_n,8}$ and defined $C_{\eqref{eq:disksaway}} = 11 C_{\ref{thm:singvalscomplexshifts}}$.

Finally, observing that every $z \in \mathcal{N}_{\delta,s}^{\C}$ has $|\Im z| > \delta$ and $|z| \le R$, we have
\begin{align}
    \sum_{z\in \mathcal{N}_{\delta, s}^\C} & \P\left[|\Lambda(A + \bM_n) \cap D(z, \sqrt{5/4}s)|\geq 2\right] \nonumber \\
    &\qquad \leq 6(R/s)^2 C_{\eqref{eq:disksaway}}\left(\frac{(B_{\bM_n,8} + \|A\| + R)^2}{\delta}\right)^{8/5}K^{24/5}(\sqrt{5}s/2)^{16/5}n^{34/5} \nonumber \\
    &\qquad \leq C_{\eqref{eq:eigsaway}}R^2(B_{\bM_n,8} + \|A\| + R)^{16/5} \frac{K^{24/5}s^{6/5}n^{34/5}}{\delta^{8/5}} \label{eq:eigsaway}
\end{align}
where $C_{\eqref{eq:eigsaway}} \defeq 6(5/4)^{8/5} C_{\eqref{eq:disksaway}}$.
  
\bigskip

\noindent \emph{Step 4: Conclusion.} 
We now put together the three steps above, substituting \eqref{eq:realgaps}, \eqref{eq:eigsnearrealline}, and \eqref{eq:eigsaway} into \eqref{eq:union-bound-first}, and adding back in the $\gamma$ scaling. Using the fact that $\psi \delta^{3/5} + \phi s^{6/5}\delta^{-8/5} \le 2\psi^{8/11}\phi^{3/11}s^{18/55}$, we obtain
\begin{align}
    \P\left[\gap(A + \bM_n) \le s\right]
    &\le 9R(\crv K/\gamma)^{8/5} n^{14/5} s^{3/5} \nonumber \\ 
        & + 2\left(C_{\eqref{eq:eigsaway}}R^2(\gamma B_{\bM_n,8} + \|A\| + R)^2 (K/\gamma)^{24/5}n^{34/5}\right)^{3/11}\left(8(\crv K/\gamma)^{8/5}n^{14/5}\right)^{8/11} s^{18/55} \nonumber \\
        & + \P\left[\|A + \gamma \bM_n\| \ge R\right] \nonumber \nonumber \\
    &\le C_{\ref{thm:gaps}} R^{14/11} \left(\gamma B_{\bM_n,8} + \|A\| + R\right)^{6/11} (K/\gamma)^{136/55} n^{214/44} s^{18/55} + \P\left[\|A + \bM_n\| \ge R\right] \nonumber \\
    &\le C_{\ref{thm:gaps}} R^2\left(\gamma B_{\bM_n,8} + \|A\| + R\right) (K/\gamma)^{5/2} n^4 s^{1/3} + \P\left[\|A + \bM_n\| \ge R\right] , \label{eq:gapfinal}
\end{align}
where 
\begin{equation}
    \label{eq:gaps-const-def}
    C_{\ref{thm:gaps}} = 2C_{(\ref{eq:eigsaway})}^{3/11}\cdot 8^{8/11} \crv^{64/55} + 9\crv^{8/5}.
\end{equation}

\end{proof}

\section{Upper Bounds on the Eigenvalue Condition Numbers} \label{sec:regularization} In this section, we convert our probabilistic lower bounds on the least singular value into upper bounds on the mean eigenvalue condition numbers, following \cite{banks2019gaussian}.
The following fact is elementary; a proof appears in \cite{banks2019gaussian}.
\begin{lemma}[Limiting Area of Pseudospectrum]
\label{prop:limiting-area-complex}
    Let $M$ be an $n \times n$ matrix with $n$ distinct eigenvalues $\lambda_1, \dots, \lambda_n$.   Let $\vol_\C$ denote the Lebesgue measure on $\mathbb{C}$, and let $\Omega\subset \C$ be a measurable open set.  Then
    \[ \liminf_{\eps \to 0} \frac{\vol_\C(\Lambda_\eps(M)\cap \Omega)}{\eps^2} \ge \pi \sum_{\lambda_i\in \Omega}^n \kappa(\lambda_i)^2.\] 
\end{lemma}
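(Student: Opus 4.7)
The plan is to exploit the spectral expansion of the resolvent to show that, for sufficiently small $\eps$, the pseudospectrum $\Lambda_\eps(M)$ contains a disk of radius essentially $\eps\kappa(\lambda_i)$ around each eigenvalue $\lambda_i$. Since $M$ has $n$ distinct eigenvalues, the resolvent admits the partial-fraction expansion
\[
(z-M)^{-1} = \sum_{j=1}^n \frac{v_j w_j^*}{z-\lambda_j},
\]
valid for $z \notin \spec(M)$. For each $i$, I would isolate the pole at $\lambda_i$ and write this as $\frac{v_i w_i^*}{z-\lambda_i} + R_i(z)$, where the remainder $R_i(z) = \sum_{j\ne i}\frac{v_j w_j^*}{z-\lambda_j}$ is uniformly bounded by some constant $C_i$ on the closed disk $\{|z - \lambda_i| \le \rho_i\}$ with $\rho_i := \tfrac12\min_{j\ne i}|\lambda_i-\lambda_j|$.

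The first step is the disk inclusion. For $z$ with $0 < |z-\lambda_i| \le r < \rho_i$, the reverse triangle inequality applied to the operator norm gives
\[
\|(z-M)^{-1}\| \ge \frac{\|v_iw_i^*\|}{|z-\lambda_i|} - \|R_i(z)\| \ge \frac{\kappa(\lambda_i)}{r} - C_i,
\]
where I used $\|v_iw_i^*\| = \|v_i\|\|w_i\| = \kappa(\lambda_i)$. Setting $r_\eps^{(i)} := \eps\kappa(\lambda_i)/(1+\eps C_i)$, an elementary rearrangement shows $\kappa(\lambda_i)/r_\eps^{(i)} - C_i \ge 1/\eps$, so the closed disk $D(\lambda_i, r_\eps^{(i)})$ lies in $\Lambda_\eps(M)$ for every $\eps$ small enough that $r_\eps^{(i)} < \rho_i$.

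Finally, I would use openness of $\Omega$ and distinctness of the eigenvalues to finish: for all sufficiently small $\eps$, each disk $D(\lambda_i, r_\eps^{(i)})$ with $\lambda_i \in \Omega$ is contained in $\Omega$, and disks around distinct eigenvalues are pairwise disjoint (since $r_\eps^{(i)} + r_\eps^{(j)} \to 0$ while $|\lambda_i - \lambda_j|$ is bounded away from zero). Summing areas,
\[
\vol_\C(\Lambda_\eps(M)\cap\Omega) \ge \pi \sum_{\lambda_i \in \Omega} \frac{\eps^2 \kappa(\lambda_i)^2}{(1+\eps C_i)^2},
\]
and dividing by $\eps^2$ and taking $\liminf$ as $\eps \to 0$ gives the desired bound since $(1+\eps C_i)^{-2} \to 1$. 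There is no real obstacle, only the bookkeeping needed to absorb the bounded remainder $R_i$; a helpful point is that the statement asks only for $\liminf$, so this one-sided disk inclusion suffices and one need not worry about controlling how far $\Lambda_\eps(M)$ extends beyond the local disks.
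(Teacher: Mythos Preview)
Your proof is correct and follows essentially the same approach as the paper (which defers to \cite{banks2019gaussian}): use the spectral expansion of the resolvent to show the disk inclusion $D(\lambda_i,\kappa(\lambda_i)\eps - O(\eps^2)) \subseteq \Lambda_\eps(M)$, then use openness of $\Omega$ and disjointness of the disks for small $\eps$ to sum areas. Indeed, the paper explicitly cites this inclusion in the proof of Lemma~\ref{lem:limiting-area-real}, and your radius $r_\eps^{(i)} = \eps\kappa(\lambda_i)/(1+\eps C_i) = \kappa(\lambda_i)\eps - O(\eps^2)$ recovers it exactly.
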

 
In addition to Lemma \ref{prop:limiting-area-complex}, we will need an easy variant relating pseudospectrum on the real line to the conditon numbers of \textit{real} eigenvalues.

\begin{lemma}[Limiting Length of Pseudospectrum on Real Line] \label{lem:limiting-area-real}
    Let $M \in \R^{n\times n}$ have $n$ distinct eigenvalues $\lambda_1,...,\lambda_n$.  Let $\vol_\R$ denote the Lebesgue measure on $\mathbb{R}$, and let $\Omega \subset \R$ be a measurable open set. Then
    $$
        2\sum_{\lambda_i \in \Omega} \kappa(\lambda_i) \le \liminf_{\eps \to 0} \frac{\vol_\R\left(\Lambda_\eps(M) \cap \Omega \right)}{\eps}
    $$
\end{lemma}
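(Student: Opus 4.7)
The plan is to mimic the proof of the complex version (Lemma \ref{prop:limiting-area-complex}) in one real dimension, using the spectral expansion of the resolvent. Since $M$ has $n$ distinct eigenvalues we may write
\[
    (z-M)^{-1} = \sum_{i=1}^n \frac{v_i w_i^*}{z - \lambda_i},
\]
where the $v_i, w_i$ are normalized so that $w_i^* v_j = \delta_{ij}$. Since $M$ is real, for each real eigenvalue $\lambda_i$ we may choose $v_i, w_i \in \R^n$, in which case the rank-one residue $v_i w_i^*$ has operator norm exactly $\|v_i\|\|w_i\| = \kappa(\lambda_i)$. Note that $\Omega \subset \R$, so only real eigenvalues contribute to the sum on the left-hand side.

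The key step is to fix a real $\lambda_i \in \Omega$ and exhibit an interval of $\Lambda_\eps(M)\cap \Omega$ around $\lambda_i$ of length roughly $2\kappa(\lambda_i)\eps$. Since the eigenvalues are distinct and $\Omega$ is open, one can choose $r_i > 0$ so that $[\lambda_i - r_i, \lambda_i + r_i] \subset \Omega$ contains no other eigenvalue of $M$. The remainder
\[
    R_i(z) := (z-M)^{-1} - \frac{v_i w_i^*}{z - \lambda_i} = \sum_{j\neq i} \frac{v_j w_j^*}{z - \lambda_j}
\]
is then continuous on this interval, hence bounded in operator norm by some constant $C_i < \infty$. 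For real $z$ with $0 < |z - \lambda_i| \le r_i$, the triangle inequality yields
\[
    \|(z - M)^{-1}\| \ge \frac{\kappa(\lambda_i)}{|z - \lambda_i|} - C_i.
\]
Recalling that $z \in \Lambda_\eps(M)$ iff $\|(z-M)^{-1}\| \ge 1/\eps$, this lower bound guarantees $z \in \Lambda_\eps(M)$ whenever $|z - \lambda_i| \le \kappa(\lambda_i)\eps/(1 + C_i \eps)$.

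For $\eps$ small enough that (i) each such interval has length at most $r_i$, and (ii) the intervals around distinct real eigenvalues in $\Omega$ are pairwise disjoint and contained in $\Omega$, we obtain
\[
    \vol_\R(\Lambda_\eps(M) \cap \Omega) \;\ge\; \sum_{\lambda_i \in \Omega} \frac{2\kappa(\lambda_i)\,\eps}{1 + C_i\,\eps}.
\]
Dividing by $\eps$ and passing to the liminf as $\eps \to 0$ gives the claimed bound. Neither step presents a real obstacle: the argument is structurally identical to Lemma \ref{prop:limiting-area-complex}, with the two-dimensional area of a disk of radius $\kappa(\lambda_i)\eps$ replaced by the one-dimensional length $2\kappa(\lambda_i)\eps$ of a real interval, and the only bookkeeping is to ensure that the neighborhoods $r_i$, the constants $C_i$, and the eventual disjointness can all be arranged for small enough $\eps$, which follows from the finiteness of the spectrum and openness of $\Omega$.
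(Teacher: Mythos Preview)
Your proof is correct and follows essentially the same approach as the paper: both show that for small $\eps$ the pseudospectrum contains an interval of length $2\kappa(\lambda_i)\eps - O(\eps^2)$ around each real $\lambda_i\in\Omega$, then sum and take the liminf. The only difference is cosmetic---the paper cites the disk containment $D(\lambda_i,\kappa(\lambda_i)\eps - O(\eps^2))\subset\Lambda_\eps(M)$ from \cite{banks2019gaussian}, whereas you rederive it directly from the resolvent expansion.
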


\begin{proof}
    For each $z \in \mathbb{C}$ and $r \ge 0$, let $D(z, r)$ denote the closed disk centered at $z$ of radius $r$. In the proof of \cite[Lemma 3.2]{banks2019gaussian} it is shown that if $M$ has $n$ distinct eigenvalues,
    \[ \bigcup_{i=1}^n D(\lambda_i,   \kappa(\lambda_i) \eps - O(\eps^2)) \subseteq \Lambda_{\eps}(M) \subseteq \bigcup_{i=1}^n D(\lambda_i,  \kappa(\lambda_i) \eps + O(\eps^2)).\]
    In particular, each $\lambda_i \in \Omega$ contributes at least $2 \kappa(\lambda) \eps - O(\eps^2)$ to the measure of $\Lambda_\eps \cap \Omega$.  Taking $\eps \to 0$ yields the conclusion.
\end{proof}
In both Lemma \ref{prop:limiting-area-complex} and Lemma \ref{lem:limiting-area-real}, if the boundary of $\Omega$ contains none of the eigenvalues, one actually has equality, the limit inferior can be replaced by the limit, and $\Omega$ need not be open, but we will not need this fact. 

\subsection{Bounds in Expectation}
We now come to the first main proposition of this section.
\begin{proposition}[$\kappa(\lambda_i)$ on the real line] \label{thm:realkappaibound} 
    Let $A \in \R^{n\times n}$ be deterministic, and let $\bM_n$ satisfy Assumption \ref{assumption} with parameter $K>0$. Write $\blambda_1,...,\blambda_n$ for the eigenvalues of $A + \gamma \bM_n$. Then for every measurable open set $\Omega \subset \R$,
    $$
        \E\sum_{\blambda_i \in \Omega}\kappa(\blambda_i) \le \frac{\crv K n^2}{2\gamma} \cdot \vol_\R(\Omega).
    $$
    In the case where $\bM_n$ is real Ginibre, one has the improvement
    $$
        \E\sum_{\blambda_i \in \Omega}\kappa(\blambda_i) \le \frac{n}{2\gamma} \cdot \vol_\R (\Omega).
    $$
    
\end{proposition}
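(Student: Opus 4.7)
The plan is to convert the least-singular-value tail bound from Theorem~\ref{thm:skboundgeneral} into a bound on the expected Lebesgue measure of the real pseudospectrum, and then apply Lemma~\ref{lem:limiting-area-real}. First I would observe that because $\bM_n$ has an absolutely continuous density, the matrix $A + \gamma \bM_n$ almost surely has $n$ distinct eigenvalues (the discriminant of its characteristic polynomial is a nonzero polynomial in the entries, hence vanishes on a Lebesgue-null set), so Lemma~\ref{lem:limiting-area-real} applies almost surely and gives
\[
    2\sum_{\blambda_i \in \Omega}\kappa(\blambda_i) \;\le\; \liminf_{\eps \to 0} \frac{\vol_\R\!\left(\Lambda_\eps(A + \gamma \bM_n) \cap \Omega\right)}{\eps}.
\]

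Next I would take expectations and push them through $\liminf$ via Fatou's lemma, and then use Fubini--Tonelli to write
\[
    \E\,\vol_\R\!\left(\Lambda_\eps(A + \gamma \bM_n) \cap \Omega\right) \;=\; \int_\Omega \P\!\left[\sigma_n\bigl(z - (A + \gamma \bM_n)\bigr) \le \eps\right]\,dz.
\]
For each fixed real $z$, the matrix $\gamma^{-1}(z - A) - \bM_n$ still satisfies Assumption~\ref{assumption} with the same parameter $K$ (shifting by a deterministic scalar preserves the density bound), so Theorem~\ref{thm:skboundgeneral} at $k=1$ yields
\[
    \P\!\left[\sigma_n\bigl(z - (A + \gamma \bM_n)\bigr) \le \eps\right] \;=\; \P\!\left[\sigma_n\bigl(\gamma^{-1}(z - A) - \bM_n\bigr) \le \eps/\gamma\right] \;\le\; \frac{\crv K n^2 \eps}{\gamma}.
\]
Integrating over $z \in \Omega$, dividing by $\eps$, and passing to the $\liminf$ eliminates the $\eps$ dependence and gives exactly $\frac{\crv K n^2}{2\gamma}\vol_\R(\Omega)$ after accounting for the factor of $2$ from Lemma~\ref{lem:limiting-area-real}.

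For the Ginibre improvement, the argument is identical except that at the single-singular-value step I would replace Theorem~\ref{thm:skboundgeneral} by the $k=1$ case of Theorem~\ref{thm:gaussianrealrealsingular}, which provides the sharper Edelman-type bound $\P[\sigma_n(z - (A + \gamma \bG_n)) \le \eps] \le n\eps/\gamma$ with no $\crv K$ prefactor, yielding the claimed $\frac{n}{2\gamma}\vol_\R(\Omega)$.

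I do not foresee a serious obstacle: all three ingredients (singular value tail, Fubini, Fatou) are already available, and the almost-sure simplicity of the spectrum is the only soft point, handled by the absolute-continuity hypothesis. The only thing to double-check is that the $\liminf$ in Lemma~\ref{lem:limiting-area-real} is measurable in the random matrix---this follows because $\vol_\R(\Lambda_\eps \cap \Omega)$ is a measurable function of the entries for each rational $\eps$, so the $\liminf$ over a countable sequence $\eps_k \downarrow 0$ is measurable and agrees with the $\liminf$ over all $\eps \to 0$ by continuity of $\eps \mapsto \vol_\R(\Lambda_\eps \cap \Omega)$.
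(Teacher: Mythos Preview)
Your proposal is correct and follows essentially the same route as the paper: apply Lemma~\ref{lem:limiting-area-real} almost surely, push the expectation through via Fatou, swap with the integral via Fubini, and then invoke the $k=1$ case of the real-shift singular value bound (Theorem~\ref{thm:skboundgeneral}/Corollary~\ref{cor:realrealsingulargamma}), replacing it by Theorem~\ref{thm:gaussianrealrealsingular} in the Ginibre case. You are slightly more careful than the paper in spelling out the almost-sure simplicity of the spectrum and the measurability of the $\liminf$, but these are the only differences.
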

\begin{proof}
    When $z$ is real, $z - A$ is also real, so we may apply the tail bound in Corollary \ref{cor:realrealsingulargamma}.  In particular, setting $k = 1$, we obtain the following tail bound for real $z$:
     \[\P[\sigma_{n}((z - A) + \gamma(-\bM_n) ) \le \eps] < \frac{\crv K n^2 \eps}{\gamma} .\]
     Since the eigenvalues of $z - (A + \gamma \bM_n)$ are distinct with probability 1, we have
     \begin{align*}
        2 \E \sum_{\lambda_i \in \Omega}\kappa(\lambda_i) 
        &\le \E\liminf_{\eps \to 0} \eps^{-1}\vol_\R\left(\Lambda_\eps(A + \gamma \bM_n) \cap \Omega \right) & & \text{Lemma \ref{lem:limiting-area-real}} \\
        &\le \liminf_{\eps \to 0} \eps^{-1} \E\int_{\Omega} \mathbf{1}_{\{z\in \Lambda_\eps(A + \gamma \bM_n)\}} \,dz & & \text{Fatou's lemma}\\
        &= \liminf_{\eps \to 0} \eps^{-1} \int_\Omega \P[z \in \Lambda_\eps(A + \gamma \bM_n)] \,dz & & \text{Fubini's theorem}\\
        &= \liminf_{\eps \to 0}\eps^{-1} \int_{\Omega} \P[\sigma_n(z - (A + \gamma \bM_n)) < \eps]\,dz & & \\
        &\le \frac{\crv K n^2 }{\gamma}  \vol_\R(\Omega). & & \text{Corollary \ref{cor:realrealsingulargamma}}
    \end{align*} 
    To obtain the improvement in the Ginibre case, in the final inequality we use the bound \[\P[\sigma_{n}(z - (A + \gamma \bG_n)) \le \eps] \le \frac{n \eps}{\gamma}\] instead, from Theorem \ref{thm:gaussianrealrealsingular}.  
\end{proof}




We now give the analogous proposition for the nonreal eigenvalues.

\begin{proposition}[$\kappa(\lambda_i)$ away from real line] \label{prop:nonrealkappaibound}
   Let $n \ge 9$.  Let $A \in \R^{n\times n}$ be deterministic.  Let $\bM_n$ satisfy Assumption \ref{assumption} with parameter $K>0$.  Let $\gamma > 0$, and write $\blambda_1,...,\blambda_n$ for the eigenvalues of $A + \gamma \bM_n$. Then for every measurable open set $\Omega \subseteq \C \setminus \R$,
    $$
        \E\sum_{\blambda_i \in \Omega}\kappa(\blambda_i)^2 \le \frac{C_{\ref{thm:singvalscomplexshifts}} K^3 n^{5}}{\gamma^3} \int_\Omega \frac{ (\gamma \E \Vert \bM_n \Vert + \Vert A \Vert + | \Re z | )^2 + |\Im z |^2 }{ |\Im z |}  \,dz.
    $$
    In the special case where $\bM_n$ is real Ginibre, one may take $n \ge 7$ and replace the term $C_{\ref{thm:singvalscomplexshifts}}K^3$ with $\frac{\sqrt{7e}}{4\pi}$. 
\end{proposition}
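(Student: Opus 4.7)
The strategy mirrors that of Proposition \ref{thm:realkappaibound}, but in the complex direction: replace the real pseudospectrum-length identity (Lemma \ref{lem:limiting-area-real}) by the complex area identity (Lemma \ref{prop:limiting-area-complex}), and the real singular-value tail bound (Corollary \ref{cor:realrealsingulargamma}) by its complex analogue (Theorem \ref{thm:singvalscomplexshifts}).

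Since the entries of $\bM_n$ have densities, the matrix $A+\gamma\bM_n$ has $n$ distinct eigenvalues almost surely, so Lemma \ref{prop:limiting-area-complex} applies a.s. Applying it, then taking expectation and invoking Fatou's lemma, Fubini's theorem, and the definition of $\Lambda_\eps$, I obtain
\begin{align*}
    \pi\,\E\!\!\sum_{\blambda_i\in\Omega}\!\!\kappa(\blambda_i)^2
    &\le \E\liminf_{\eps\to 0}\frac{\vol_\C(\Lambda_\eps(A+\gamma\bM_n)\cap\Omega)}{\eps^2} \\
    &\le \liminf_{\eps\to 0}\eps^{-2}\int_\Omega \P\bigl[\sigma_n(z-(A+\gamma\bM_n))\le\eps\bigr]\,dz.
\end{align*}
This reduces the problem to controlling the integrand by $\eps^2$ times something integrable in $z$, which is precisely the content of Theorem \ref{thm:singvalscomplexshifts}.

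Next I apply Theorem \ref{thm:singvalscomplexshifts} with $k=1$. Because $\gamma\bM_n$ has entries with density bounded a.e.\ by $\sqrt n\,(K/\gamma)$, and since $B_{\gamma\bM_n,2}=\gamma B_{\bM_n,2}$, the substitutions $K\mapsto K/\gamma$ and $B_{\bM_n,2}\mapsto \gamma B_{\bM_n,2}$ combined with $\binom{n}{1}^2=n^2$ and $1+k^2=2$ give
\[
    \P\bigl[\sigma_n(z-(A+\gamma\bM_n))\le\eps\bigr]
    \;\le\; \frac{2\,C_{\ref{thm:singvalscomplexshifts}}K^3 n^5}{\gamma^3}
    \Bigl(\bigl(\gamma B_{\bM_n,2}+\|A\|+|\Re z|\bigr)^2+|\Im z|^2\Bigr)\frac{\eps^2}{|\Im z|}.
\]
Dividing by $\pi\eps^2$, integrating over $\Omega$, and absorbing the factor $2/\pi<1$ into $C_{\ref{thm:singvalscomplexshifts}}$ produces the claimed bound (up to the distinction between $B_{\bM_n,2}$ and $\E\|\bM_n\|$, discussed below). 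For the Gaussian improvement, the argument is identical except that I invoke Theorem \ref{prop:gaussiansingvalscomplex} in place of Theorem \ref{thm:singvalscomplexshifts}: its $k=1$ instance produces the constant $\sqrt{7e}\,n^5/(2\gamma^3)$ in front, which after dividing by $\pi$ and collecting factors yields the stated $\sqrt{7e}/(4\pi)$ (using also that $9$ or rather $\E\|\bG_n\|$ serves as the $B$-parameter).

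\textbf{Main obstacle.} The genuine work is already packaged into Theorem \ref{thm:singvalscomplexshifts} and Lemma \ref{prop:limiting-area-complex}; what remains is mostly bookkeeping. The one subtle point is matching the form $(\gamma\E\|\bM_n\|+\|A\|+|\Re z|)^2$ in the statement, whereas a direct application of Theorem \ref{thm:singvalscomplexshifts} yields $(\gamma B_{\bM_n,2}+\|A\|+|\Re z|)^2$; since $\E\|\bM_n\|\le B_{\bM_n,2}$ by Jensen, this requires slightly finer bookkeeping inside the proof of Lemma \ref{lem:taildiagonalentries}. Specifically, for $k=1$ one should condition on $\bM_n$ before invoking Fubini, so that the factor $\|\bM_n\|$ appears linearly (rather than inside $\E[\|\bM_n\|^2]=B_{\bM_n,2}^2$); Jensen's inequality then pulls $\E$ inside the square to produce $(\gamma\E\|\bM_n\|+\cdots)^2$. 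A secondary subtlety is justifying the Fatou/Fubini exchange, which is immediate from Tonelli since the integrand is nonnegative and measurable in $(\omega,z)$.
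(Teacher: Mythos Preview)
Your proposal is correct and follows essentially the same route as the paper: the paper's proof is literally a two-sentence instruction to mimic Proposition~\ref{thm:realkappaibound}, swapping Lemma~\ref{lem:limiting-area-real} for Lemma~\ref{prop:limiting-area-complex} and Corollary~\ref{cor:realrealsingulargamma} for Theorem~\ref{thm:singvalscomplexshifts} (or Theorem~\ref{prop:gaussiansingvalscomplex} in the Ginibre case). Your observation about $\E\|\bM_n\|$ versus $B_{\bM_n,2}$ is a genuine bookkeeping subtlety that the paper glosses over---formally the direct application yields the latter, and the stated form with $\E\|\bM_n\|$ requires either the conditioning refinement you describe or simply accepting the weaker $B_{\bM_n,2}$ version (which suffices for the downstream application in Theorem~\ref{thm:kappai-probabilistic-intro}).
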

\begin{proof}
    In the proof of Theorem \ref{thm:realkappaibound}, since $\Omega \subseteq \C \setminus \R$ we replace Lemma \ref{lem:limiting-area-real} with Lemma \ref{prop:limiting-area-complex}. Since $z$ is no longer real we must also replace the singular value tail bound in Corollary \ref{cor:realrealsingulargamma} with the one in Theorem \ref{thm:singvalscomplexshifts} (or the one in Theorem \ref{prop:gaussiansingvalscomplex}, for the Ginibre case).
\end{proof}

\subsection{Bounds with high probability: Proofs of Theorems \ref{thm:kappai-probabilistic-intro} and \ref{thm:gaussiankappai-probabilistic-intro}}
We now prove the main theorem of this section, which implies that all eigenvalue condition numbers are bounded by $\poly(n/\gamma)$ with probability $1 - 1/\poly(n)$.  In the notation of the theorem below, $R, \Vert A \Vert, K$, and $\gamma$ will be $\Theta(1)$ in most applications, so $\eps_1$ and $\eps_2$ may be set to $1/n^D$ for sufficiently high $D$.

\begin{rtheorem}{\ref{thm:kappai-probabilistic-intro}}
    Let $n \ge 9$.  Let $A \in \R^{n\times n}$ be deterministic, and let $\bM_n$ satisfy Assumption \ref{assumption} with parameter $K>0$. Let $0 < \gamma < K \min\{1, \Vert A \Vert + R\}$, and write $\blambda_1,...,\blambda_n$ for the eigenvalues of $A + \gamma \bM_n$.  Let $R > \E \Vert \gamma \bM_n \Vert$.  Then for any $\eps_1, \eps_2 > 0$, with probability at least $1 - 2\eps_1 -  O\left(\frac{R(R + \Vert A \Vert)^{3/5}K^{8/5}n^{14/5} \eps_2^{3/5}}{\gamma^{8/5}}\right) - 2 \P[\gamma \Vert \bM_n \Vert > R]$ we have
    \[ 
        \sum_{\blambda_i \in \R} \kappa(\blambda_i) \le \eps_1^{-1} C_{\ref{thm:kappai-probabilistic-intro}}  K n^2 \frac{\Vert A \Vert + R}{\gamma}, 
    \]
    \[ 
        \sum_{\blambda_i \in \C \setminus \R} \kappa(\blambda_i)^2 \le \eps_1^{-1} \log(1/\eps_2) C_{\ref{thm:kappai-probabilistic-intro}}  K^3 n^{5} \cdot \frac{(\Vert A \Vert + R)^3}{\gamma^3},\qquad\text{and}
    \]
    \[
        \kappa_V(A + \gamma \bM_n) \le \eps_1^{-1} \sqrt{\log(1/\eps_2)} C_{\ref{thm:kappai-probabilistic-intro}}  K^{3/2} n^3 \cdot \frac{(\Vert A \Vert + R)^{3/2}}{\gamma^{3/2}},
    \]
for some universal constant $C_{\ref{thm:kappai-probabilistic-intro}}  > 0$.
\end{rtheorem}

\begin{proof}
From here on out, assume that each of $\sum_{\blambda_i \in \R} \kappa(\blambda_i)$ and $\sum_{\blambda_i \in \C \setminus \R} \kappa(\blambda_i)^2$ is at most $\eps_1^{-1}$ times its expectation; by Markov's inequality and a union bound this happens with probability at least $1 - 2\eps_1$.

Let $\delta \in (0, R)$ be a small parameter to be optimized later.  Let $L := \Vert A \Vert + R$, and define the regions $\Omega_\mathbb{R}$ and $\Omega_\C$ as follows:
\[ 
    \Omega_\mathbb{R} := \{x \in \mathbb{R} : |x| < L\} 
\]
\[ 
    \Omega_\C := \{x + yi : x \in \mathbb{R} \text{ and } \delta < |y| < L. \} 
\]

Write $E_{\text{bound}}$ for the event that $\gamma \Vert \bM_n \Vert < R$ and let $E_{\text{strip}}$ denote the event that $\Im_{\min}(A+ \gamma \bM_n)> \delta$. Then with probability at least $1 - 2\eps_1 -  \P[E_\text{bound}] - \P[E_\text{strip}]$, all eigenvalues of $A + \gamma M_n$ are contained in $\Omega_\R \cup \Omega_C$, so 
\begin{align*}
    \sum_{\blambda_i \in \R} \kappa(\blambda_i) = \sum_{\blambda_i \in \Omega_\R} \kappa(\blambda_i) 
    \le \frac{\crv K n^2 }{ 2 \gamma}  \vol_\R(\Omega_\R)
    \le \frac{\crv K n^2 L}{\gamma}
\end{align*}
and 
\begin{align*}
    \sum_{\blambda_i \in \C \setminus \R} \kappa(\blambda_i)^2 &= \sum_{\blambda_i \in \Omega_\C} \kappa(\blambda_i)^2 \\
    &\le  \frac{C K^3 n^{5}}{\gamma^3} \int_{\Omega_\C} \frac{ (\gamma \E \Vert M_n \Vert + \Vert A \Vert + | \Re z | )^2 + |\Im z |^2 }{ |\Im z |}  \,dz \\
    &\le   2  \frac{C K^3 n^{5}}{\gamma^3}  \int_\delta^{L} \int_{-L}^{L}  \frac{ ( \gamma \E \Vert M_n \Vert + \Vert A \Vert + | x| )^2 +  y^2 }{ y}  \,dx\,dy \\
    &\le 2 \frac{C K^3 n^{5}}{\gamma^3} \int_\delta^{L} 2 L \frac{ (2L)^2 +  L^2 }{ y } \,dy \\
    &= 20 \frac{C K^3 n^{5}}{\gamma^3} L^3 (\ln L  + \ln (1/\delta)). \\
\end{align*}

Recall from (\ref{eq:im-min-standalone}) that \[\P[E_{\text{strip}}] = O( R  K^{8/5}n^{14/5} \delta^{3/5}/\gamma^{8/5}) + \P[\gamma \|\bM_n\|\geq R],\]
so setting $\delta = L \eps_2$ yields the result.

To obtain the bound on $\kappa_V$, first note that by the definition of $\kappa_V$ and the fact that the Frobenius norm upper bounds the operator norm, we have
\[
    \kappa_V(A + \gamma \bM_n) \le \sqrt{n \sum_{i=1}^n \kappa(\blambda_i)^2} \le \sqrt{n} \sqrt{\left( \sum_{\blambda_i \in \R} \kappa_i(\blambda_i) \right)^2 + \sum_{\blambda_i \in \C \setminus \R} \kappa(\blambda_i)^2}.
\]
For a more detailed argument see the proof of Lemma 3.1 in \cite{banks2019gaussian}. Substituting this inequality in the bounds above yields the advertised result.
\end{proof}

In the special case of Ginibre matrices, we will endeavor to give an explicit bound on the constant factors appearing in the proof of Theorem \ref{thm:kappai-probabilistic-intro} without being too wasteful.  We also save one factor of $n$ in the bound for real eigenvalues in comparison to Theorem \ref{thm:kappai-probabilistic-intro}.

\begin{rtheorem}{\ref{thm:gaussiankappai-probabilistic-intro}}
    Let $n \ge 7$.  Let $A \in \R^{n\times n}$ be deterministic, and let $\bG_n$ be a real Ginibre matrix. Let $0 < \gamma < \min\{1, \Vert A \Vert\}$, and write $\blambda_1,...,\blambda_n$ for the eigenvalues of $A + \gamma \bG_n$.  Then for any $\eps_1, \eps_2 > 0$, with probability at least $1 - 2 \eps_1 -  \frac{30 \|A\|^{8/5} n^{8/5}}{\gamma^{8/5}}\eps_2^{3/5} - 2 e^{-2n}$ we have
    \[ 
        \sum_{\blambda_i \in \R}  \kappa(\blambda_i) \le 5 \eps_1^{-1} n \frac{\Vert A \Vert}{\gamma},
    \]
    \[
        \sum_{\blambda_i \in \C \setminus \R} \kappa(\blambda_i)^2 \le
    1000  \eps_1^{-1} \log(1/\eps_2)\frac{n^5  \Vert A \Vert^3}{\gamma^3},\qquad\text{and}
    \]
    \[
        \kappa_V(A + \gamma \bM_n) \le  1000 \eps_1^{-1}\sqrt{\log(1/\eps_2)}\frac{n^3 \Vert A \Vert^{3/2}}{\gamma^{3/2}}. 
    \]
\end{rtheorem}

\begin{proof}
We identify the necessary modifications to the proof of Theorem \ref{thm:kappai-probabilistic-intro}.  First, set $R = 4 \gamma$, so that $\P[ \gamma \Vert \bG_n \Vert > R] < e^{-2n}$. The statement for real eigenvalues is then immediate, using the improvement for Ginibre matrices in Proposition \ref{thm:realkappaibound}.

Now we proceed to the bound for the nonreal eigenvalues. Take $\delta = \eps_2 \Vert A \Vert$, so that by (\ref{eq:im-min-standalone-gaussian})---forthcoming in the appendix---we have 
\[
    \P[E_{\text{strip}}]\le  6(\Vert A \Vert + 4 \gamma) \frac{ n^{8/5} \Vert A \Vert^{3/5} \eps_2^{3/5}}{ \gamma^{8/5}}  \le \frac{30 n^{8/5} \|A\|^{8/5} \eps_2^{3/5}}{\gamma^{8/5}},
\] where we use $\gamma < \Vert A \Vert$.  Recall $\E \Vert G_n \Vert \le 2$  (see \cite{aubrun2017alice}).  Replacing $C_{\ref{thm:singvalscomplexshifts-intro}}K^3$ with $\frac{\sqrt{7e}}{4\pi}$ as indicated in Proposition \ref{prop:nonrealkappaibound}, and computing the integral
\begin{align*}  \int_\delta^{L} \int_{-L}^{L}  \frac{ ( L + | x| )^2 +  y^2 }{ |y |}  \,dx\,dy  &= 
\frac{14}{3} L^3 (\log L + \log(1/\delta)) + L^3 -  L\delta^2 \\
&\le \frac{14}{3} L^3 (\log L + \log(1/\eps_2) - \log \Vert A \Vert ) + L^3,  \end{align*}
one obtains
\[\sum_{\blambda_i \in \C \setminus \R} \kappa(\blambda_i)^2 \le \frac{7\sqrt{7e}}{6\pi \gamma^3} n^5 (\Vert A \Vert + 4\gamma)^3 (\log (\Vert A \Vert + 4\gamma) + \log(1/\eps_2) - \log \Vert A \Vert + 3/14). \]
Using $\gamma < \Vert A \Vert$ and cleaning up the constants, we arrive at the form in the theorem statement.
\end{proof}





\section{Further Questions}\label{sec:conclusion}

There are a few natural directions to pursue.  One direction is to prove analogous results for more general perturbations $\bM_n$. It was speculated in \cite{banks2019gaussian} that low-rank matrices could regularize the eigenvalue condition numbers of any matrix, but this is false; see Appendix \ref{sec:boundedrank} for a discussion.  
Another question is: what can be said about the eigenvalue condition numbers for random matrices without continuous entries? Solving this question would require  essentially different ideas from those presented in this paper. More concretely, our proof technique requires  $$\lim_{\eps\to 0}\P[\sigma_n(z-(A+\bM_n))\leq \eps]=0,$$ and this may no longer hold  if the distributions of the entries of $\bM_n$ are allowed to be discrete.   A natural starting point is the case of i.i.d. $\pm 1$ entries:  
\begin{problem}
 Let $\bM_n$ be a matrix with independent Rademacher entries. For which deterministic matrices $A$ and which $\gamma >0$ does it hold, with high probability,  that $\kappa_V(A+\gamma \bM_n) = O(n^C)$ for some $C > 0$?
\end{problem}

With regards to the least singular value of complex shifts of real ensembles,  we posit the following possible improvement to Theorem \ref{prop:gaussiansingvalscomplex} in the dependence on $n$:
\begin{conjecture}
 Let $\bG_n$ be an $n\times n$  real Ginibre matrix. Then, for any constant $C>0$ there exists a constant $C'$ (depending on $C$ only) such that  for any $ \eps>0$ and  $z\in \mathbb{C}\setminus \mathbb{R}$ with $|z|\leq C$ it holds that
\begin{equation}
\label{eq:conj1}
\mathbb{P}\left[\sigma_n(z-\bG_n)\leq \eps \right]\leq \frac{C'n^2 \eps^2 }{|\mathrm{Im(z)}|}. 
\end{equation}
\end{conjecture}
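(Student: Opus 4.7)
The plan is to mirror the resolvent reduction underlying Theorem~\ref{prop:gaussiansingvalscomplex-intro} but to replace its two lossy steps---the restricted-invertibility union bound (Corollary~\ref{cor:linearalgnonpsd}) and the entry-wise Lemma~\ref{lem:taildiagonalentries}---by a direct control of the Hilbert--Schmidt norm of the resolvent $\bR(z) \defeq (z - \bG_n)^{-1}$. Since $\sigma_n(z - \bG_n)^{-2} \le \|\bR(z)\|_F^2$, a Markov-type bound (after truncating on $\{\sigma_n \ge \delta\}$ to ensure integrability) would reduce the conjecture to showing
\begin{equation*}
    \E\bigl[\|\bR(z)\|_F^2 \cdot \1_{\sigma_n(z-\bG_n) \ge \delta}\bigr] \;=\; O(n^2/|\Im z|)
\end{equation*}
for $\delta$ chosen of order $\eps$, together with a separate treatment of the event $\{\sigma_n < \delta\}$ (where the crude bound of Theorem~\ref{prop:gaussiansingvalscomplex-intro} is already of the right order in $\eps$ once $\delta$ is small enough).

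The key technical input is the following resolvent identity available for real matrices with complex shifts: writing $M = z - \bG_n$ and differentiating $M^{-1} M = I$, together with $M - M^* = 2i(\Im z)I - (\bG_n - \bG_n^\T)$, gives
\begin{equation*}
    \Im \bR(z) \;=\; -(\Im z)\, \bR \bR^* \;+\; \tfrac{1}{i}\, \bR\, \bG_n^a\, \bR^*, \qquad \bG_n^a \;\defeq\; (\bG_n - \bG_n^\T)/2.
\end{equation*}
Taking the trace and rearranging,
\begin{equation*}
    |\Im z|\cdot \|\bR(z)\|_F^2 \;=\; -\Im \Tr \bR(z) \;+\; \tfrac{1}{i}\Tr\!\bigl(\bG_n^a \,\bR^*\bR\bigr)
\end{equation*}
(assuming $\Im z > 0$). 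Under the Ginibre law, the symmetric part $\bG_n^s \defeq (\bG_n + \bG_n^\T)/2$ and antisymmetric part $\bG_n^a$ are independent Gaussians, so after conditioning on $\bG_n^s$ one can integrate out $\bG_n^a$ using Gaussian integration by parts (Stein's lemma). Because $\bR$ depends smoothly on $\bG_n^a$, this reduces the expectation of the correction term to a linear combination of second-order moments of entries of $\bR$, which in turn can be recombined with $\E\|\bR\|_F^2$ into a self-consistent equation. The first term $-\E\Im\Tr\bR$ is $O(n)$ for $|z|\le C$ (this is essentially $n$ times the imaginary part of the Stieltjes transform of the circular law, which is $-\Im z$ inside the unit disc); one therefore hopes the self-consistent equation closes to $\E\|\bR(z)\|_F^2 = O(n^2/|\Im z|)$.

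The main obstacle is the irreducibility of $\E\|\bR(z)\|_F^2 = \E \sum_i \sigma_i(z-\bG_n)^{-2}$: even assuming the conjecture, the contribution of $\sigma_n^{-2}$ alone is logarithmically divergent (the integrand $\eps/\eps^3$ at the origin), so one cannot apply Markov's inequality without the truncation above, and the truncation in turn breaks the clean resolvent identity since Stein's lemma requires genuine smoothness in $\bG_n^a$. Closing the resulting circular dependency requires a bootstrapping scheme in which the conclusion of the theorem at a larger scale $\delta_0 \gg \eps$ is used to justify the identity and is then iteratively sharpened down to the scale $\eps$. Such bootstrapping is familiar from local law proofs for the Hermitization $\bH_z = \bigl(\begin{smallmatrix} 0 & z - \bG_n \\ (z-\bG_n)^* & 0\end{smallmatrix}\bigr)$, but controlling the constants uniformly as $|\Im z|\to 0$---where the statistics of $\sigma_n(z-\bG_n)$ transition between the real and complex Ginibre universality classes identified in \cite{cipolloni2020fluctuation}---is, to my mind, the central difficulty any proof of this conjecture must confront.
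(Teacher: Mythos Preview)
The statement you are attempting to prove is labeled a \emph{Conjecture} in the paper (it appears in Section~\ref{sec:conclusion}, ``Further Questions''), and the paper offers no proof of it---indeed, the authors explicitly present it as a proposed sharpening of Theorem~\ref{prop:gaussiansingvalscomplex-intro} in the $n$-dependence, and leave it open. There is therefore no paper-proof to compare your proposal against.

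As for the proposal itself: it is not a proof, and you are candid about this. You correctly identify that $\E\|\bR(z)\|_F^2$ is divergent (the least singular value tail is not better than $\eps^2$, so $\sigma_n^{-2}$ has infinite expectation), which obstructs a direct Markov argument; you then propose a truncation-and-bootstrap scheme but do not carry it out. The resolvent identity you write is correct, and the decomposition of $\bG_n$ into independent symmetric and antisymmetric Gaussian parts is a reasonable place to look for cancellation. However, the self-consistent equation you allude to is not derived, and the claim that it ``closes'' to $O(n^2/|\Im z|)$ is the entire content of the conjecture---you have not supplied any mechanism for why the Stein-derived second-order terms should be subleading rather than of the same order as $\E\|\bR\|_F^2$ itself. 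In short, you have outlined a plausible framework and honestly flagged its gaps, but the gaps are exactly where the mathematics would need to happen; the proposal does not advance beyond the state of the paper.
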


Actually, we believe that a stronger conjecture is true. Namely, the bound in (\ref{eq:conj1}) should hold  even when $\bG_n$ is substituted by $A+\bG_n$, where $A\in \mathbb{R}^{n\times n}$ is deterministic. In this case $C'$ is also allowed to depend on $\|A\|$. 

Our next conjecture is that Szarek's bound for singular values of real Ginibre matrices in Theorem \ref{thm:szarek} holds, up to the value of the universal constant $C$, in the more general setting of matrices satisfying Assumption \ref{assumption}.   This would constitute an improvement of Theorem \ref{thm:skboundgeneral} in the dependence on $k$ and $n$. 

\begin{conjecture}
\label{conj:generalSzarek}
 Let $\bM_n$ be a real random matrix satisfying Assumption \ref{assumption} with parameter $K>0$ and perhaps with some moment assumptions on its entries. Then, there is a universal constant $C$ such that for any deterministic $A\in \mathbb{R}^{n\times n}$, it holds that
$$\mathbb{P}\left[ \sigma_{n-k+1}(A+\bM_n)\leq \frac{k\eps}{n} \right] \leq (CK\eps)^{k^2}.$$
\end{conjecture}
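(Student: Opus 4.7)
Conjecture~\ref{conj:generalSzarek} asks to remove the $\binom{n}{k}$-type prefactor in Theorem~\ref{thm:skboundgeneral}, which arises from the union bound over $k\times k$ principal submatrices in the restricted-invertibility argument based on Lemma~\ref{lem:linearalgebralemma}. In the $k=1$ case the conjecture reduces to a known bound on the least singular value of non-centered matrices (Table~\ref{tab:centeredresults}), so the natural plan is to extend the $k=1$ machinery to higher $k$.

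My first candidate approach is a Grassmannian net. If $\sigma_{n-k+1}(A+\bM_n) \le \eps$, there exists $V \in \R^{n\times k}$ with orthonormal columns such that $\|(A+\bM_n)V\|_F \le \eps\sqrt k$. For any fixed such $V$, Theorem~\ref{thm:rvproj} bounds the density of $\bM_n V$ on $\R^{n\times k}$ by $(\crv K \sqrt n)^{nk}$, giving a pointwise tail bound of order $(CK\eps)^{nk}$ after a volume estimate for the Frobenius ball. An $\eta$-net of $\mathrm{Gr}(k,n)$ at scale $\eta \sim \eps/R$ (with $R$ a norm bound on $A + \bM_n$) has size at most $(CR/\eps)^{k(n-k)}$. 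The $\eps$-exponents balance, since $nk - k(n-k) = k^2$, producing the target $\eps^{k^2}$ scaling --- but with a prohibitive prefactor of $(CK)^{nk} R^{k(n-k)}$, which is exponentially large in $n$. This suggests that the naive net must be replaced by an adaptive one, perhaps using a compressible/incompressible dichotomy to first localize the approximate null subspace to a region of $\mathrm{Gr}(k,n)$ with much smaller metric entropy.

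A second, potentially cleaner, approach is to reduce to a least-singular-value problem through the $k$-th exterior power. Since $\sigma_{\min}\bigl(\wedge^k(A+\bM_n)\bigr) = \prod_{j=0}^{k-1}\sigma_{n-j}(A+\bM_n) \le \sigma_{n-k+1}(A+\bM_n)^k$, it suffices to prove
\[
    \P\bigl[\sigma_{\min}\bigl(\wedge^k(A+\bM_n)\bigr) \le \eps^k\bigr] \le (CK\eps)^{k^2}.
\]
The matrix $\wedge^k(A+\bM_n) \in \R^{\binom{n}{k}\times\binom{n}{k}}$ has $T$-indexed column ($|T|=k$) equal to the wedge of the $T$-columns of $A+\bM_n$, a degree-$k$ polynomial in the underlying entries. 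I would imitate the Rudelson-Vershynin distance-to-hyperplane argument: fix a column index $T = \{t_1,\ldots,t_k\}$, condition on the $n-k$ columns of $\bM_n$ outside $T$, and treat $\bm_{t_1},\ldots,\bm_{t_k}$ as the source of randomness. With this conditioning, the $T$-column of $\wedge^k\bM_n$ is a multilinear degree-$k$ polynomial in $k$ independent absolutely continuous vectors, and the anticoncentration framework of Section~\ref{sec:anticoncentration} (an appropriate extension of Theorem~\ref{thm:anticoncentration} to multilinear forms) should supply the correct density bound, yielding $\eps^{k^2}$ after projection onto the orthogonal complement of the other columns' span.

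The main obstacle in the second approach is that the \emph{other} columns of $\wedge^k \bM_n$ are themselves multilinear functions of the same random vectors $\bm_{t_1},\ldots,\bm_{t_k}$, so projecting onto their complement produces a polynomial-in-polynomials that the current anticoncentration toolbox does not directly cover. Overcoming this will likely require either a substantial extension of Theorem~\ref{thm:anticoncentration} to rank-degenerate polynomial maps, or a finer decoupling in the spirit of the block-conditioning used in Lemma~\ref{lem:taildiagonalentries}, where one conditions on a carefully chosen subset of \emph{entries} of $\bM_n$ (not full columns) so that the relevant columns of $\wedge^k\bM_n$ become conditionally independent. A mild moment assumption on the entries, as suggested by the conjecture, should handle rare events where $\|\bM_n\|$ is atypically large via a preliminary truncation step, but the algebraic heart of the problem --- decoupling the correlated minors --- appears genuinely delicate and is the bottleneck I expect to require the most new ideas.
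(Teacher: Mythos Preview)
The statement you are attempting to prove is a \emph{conjecture} in the paper; there is no proof in the paper to compare against. The paper explicitly lists it as an open problem, noting only that the case $k=1$ is known (due to Tikhomirov) and that for general $k$ it would improve Theorem~\ref{thm:skboundgeneral} in both the $k$- and $n$-dependence. Your proposal is, appropriately, not a proof but a sketch of two possible attack lines, each with acknowledged obstacles. That is an honest assessment of the situation.

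That said, your second approach has a fatal flaw at the reduction step, not merely at the anticoncentration step you flagged. You correctly observe that $\sigma_{\min}(\wedge^k M) = \prod_{j=0}^{k-1}\sigma_{n-j}(M) \le \sigma_{n-k+1}(M)^k$, whence
\[
    \P\bigl[\sigma_{n-k+1}(M)\le \eps\bigr] \le \P\bigl[\sigma_{\min}(\wedge^k M)\le \eps^k\bigr],
\]
and you then propose to bound the right-hand side by $(CK\eps)^{k^2}$. But this target bound is false already for the real Ginibre ensemble when $k\ge 2$: the event $\{\sigma_n(M)\le \eps^k\}$ is contained in $\{\sigma_{\min}(\wedge^k M)\le \eps^k\}$, and by Edelman's bound $\P[\sigma_n(\bG_n)\le \eps^k]\asymp n\eps^k$, which for small $\eps$ is of strictly larger order than $\eps^{k^2}$. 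In other words, the exterior-power reduction throws away the joint structure of the $k$ smallest singular values --- the product can be small because $\sigma_n$ alone is extremely small --- and the resulting bound can never recover the $\eps^{k^2}$ scaling. Any viable approach must retain control of all $k$ bottom singular values simultaneously, not just their product.

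Your first approach (Grassmannian net) correctly isolates where the difficulty lies: the $\eps$-exponents do balance to $k^2$, but the prefactor is exponential in $n$. Your suggestion of a compressible/incompressible dichotomy at the level of $k$-dimensional subspaces is the natural next step and is in the spirit of how the $k=1$ case is handled; however, no such argument is carried out in the paper, and the conjecture remains open there as well.
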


It is worth noting that Conjecture \ref{conj:generalSzarek} is known to be true when $k=1$. This was proven by Tikhomirov in \cite{tikhomirov2017invertibility} under weaker assumptions on the independence of the entries of $\bM_n$.  

 \subsection*{Acknowledgments} We thank Amol Aggarwal, Nick Cook, Hoi Nguyen, and Stanis{\l}aw Szarek  for helpful conversations. We thank Vishesh Jain for helpful conversations and for pointing out the important reference \cite{ge2017eigenvalue} to us at the IPAM reunion workshop of the program ``Quantitative Linear Algebra'', which was the starting point of this work.

\bibliographystyle{alpha}
\bibliography{regularization}

	\appendix

\section{Bounded-rank Perturbations} \label{sec:boundedrank}
In this section, we show that in contrast to Ginibre and general continuous perturbations, bounded-rank perturbations do not regularize the pseudospectrum of all matrices.  Precisely, we have the following result:

\begin{proposition}\label{prop:boundedrank}
Fix $r \in \mathbb{N}$.  Let $A \in \mathbb{C}^{n \times n}$ be any matrix, and let $B: = A \otimes I_r \in \mathbb{C}^{nr \times nr}$.  Then $\Lambda_\eps(B) \subseteq \Lambda_\eps(B + M)$ for all $\eps > 0$ and all matrices $M$ with rank at most $r-1$.
\end{proposition}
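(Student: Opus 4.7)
The proof will be essentially a one-line computation that exploits the Kronecker-product structure of $B$ together with the sub-additivity of singular values under low-rank perturbations. The plan is to show the stronger pointwise statement that
$$
\sigma_{nr}(z-B-M) \le \sigma_{nr}(z-B)
$$
for every $z\in\mathbb{C}$ and every $M$ with $\rank(M)\le r-1$, which immediately implies the pseudospectrum containment.

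The key structural observation is that $z-B = (zI_n-A)\otimes I_r$, so the multiset of singular values of $z-B$ is obtained from the multiset of singular values of $zI_n-A$ by repeating each value $r$ times. In particular, the bottom $r$ singular values of $z-B$ all coincide, giving
$$
\sigma_{nr}(z-B) \;=\; \sigma_{nr-1}(z-B) \;=\; \cdots \;=\; \sigma_{nr-r+1}(z-B) \;=\; \sigma_n(zI_n-A).
$$
This ``flatness at the bottom'' is what lets a generic low-rank noise fail to improve the smallest singular value of $z-B$: the small singular values are simply too numerous to all be pushed up by a rank-$(r-1)$ update.

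Now I would invoke the Weyl-type inequality $\sigma_{i+j-1}(X+Y)\le \sigma_i(X)+\sigma_j(Y)$ (see e.g.\ Horn--Johnson, Topics in Matrix Analysis) with $X = z-B$, $Y = -M$, $i = nr-r+1$, and $j = r$. This yields
$$
\sigma_{nr}(z-B-M) \;\le\; \sigma_{nr-r+1}(z-B) + \sigma_r(M).
$$
Since $\rank(M)\le r-1$, we have $\sigma_r(M)=0$, and combining with the flatness observation gives $\sigma_{nr}(z-B-M)\le \sigma_{nr}(z-B)$, as desired. If $z\in \Lambda_\eps(B)$, then by definition $\sigma_{nr}(z-B)\le \eps$, hence $\sigma_{nr}(z-B-M)\le \eps$, i.e.\ $z\in \Lambda_\eps(B+M)$.

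There is no real obstacle here: the proof is essentially a bookkeeping exercise with singular values. The only subtlety worth flagging is the direction of the Weyl-type bound one uses; the version stated above is the correct one, and it is precisely the quantitative form of the intuition that a rank-$(r-1)$ perturbation can at most ``eat'' $r-1$ of the repeated smallest singular values of $z-B$, leaving at least one of them untouched. This is also why the construction is tight: taking $M$ to be a generic rank-$(r-1)$ perturbation aligned with $r-1$ of the bottom singular directions of $z-B$ achieves equality, showing that low-rank perturbations genuinely cannot regularize the pseudospectrum in this setting.
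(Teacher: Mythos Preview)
Your proof is correct and follows essentially the same approach as the paper: both observe that $z-B=(z-A)\otimes I_r$ forces the bottom $r$ singular values of $z-B$ to coincide, and then use the fact that a rank-$(r-1)$ perturbation cannot push the smallest singular value above $\sigma_{nr-r+1}(z-B)$. The only cosmetic difference is that the paper phrases the last step as ``repeated application of interlacing for rank-one updates'' while you invoke the Weyl-type inequality $\sigma_{i+j-1}(X+Y)\le\sigma_i(X)+\sigma_j(Y)$ in one shot; these are equivalent.
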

As a particular example, if we choose $A$ to be nondiagonalizable, then $B$ is nondiagonalizable, and we recover the known fact that $B+M$ is nondiagonalizable for any $M$ with rank at most $r-1$.  (A matrix $X$ is nondiagonalizable if and only if $\lim_{\eps \to 0} \vol_\C(\Lambda_\eps(X))/\eps^2 = \infty$.) Much more is known about the Jordan structure upon low-rank perturbations; see e.g. \cite{ran2012eigenvalues} and the references therein, including \cite{hormander1994remark, moro2003low, savchenko2004change}.

\begin{proof}[Proof of Proposition \ref{prop:boundedrank}]
    For each $z \in \mathbb{C}$, we have $z - B = (z - A) \otimes I_r$.  Thus, \[\sigma_n(z - B) = \sigma_{n-1}(z-B) = \dots = \sigma_{n - r + 1}(z - B) = \sigma_n(z-A).\]
    Then for any matrix $M \in \mathbb{C}^{n \times n}$ with rank at most $r - 1$, repeated application of interlacing for rank-one updates yields
    \[ \sigma_n(z - B - M) \le \sigma_{n - r + 1}(z - B) = \sigma_n(z-B).\]
    Since the above holds for all $z \in \mathbb{C}$, we have by definition $\Lambda_\eps(B)  \subseteq \Lambda_\eps(B+M)$ for all $\eps > 0$, as desired.
\end{proof}

\section{Moments of the Ginibre Operator Norm} 

\label{sec:ginibretail}
\begin{proof}{Proof of Lemma \ref{lem:CGnp}}
  Begin by observing that
    \begin{equation}
    \label{eq:ginibretail}
    \E[\|G_n\|^p] = p\int_0^2 t^{p-1} \P[\|G_n\|\geq t] dt + p\int_2^\infty t^{p-1} \P[\|G_n\|\geq t] dt \leq  2^p+ p \int_2^\infty t^{p-1} \exp\left\{-n(t-2)^2/2\right\} dt 
    \end{equation}
    where the last inequality used a standard tail bound on $\|G_n\|$ (see for example \cite{davidson2001local}). Now, by Jensen's inequality, for $t\geq 2$ we have $$t^{p-1} = (t-2+2)^{p-1}\leq \frac{1}{2} \left( 2^{p-1}(t-2)^{ p-1}+4^{p-1} \right).$$
    Then, use this inequality and the formula for the absolute moments of the Gaussian distribution to bound the last integral in (\ref{eq:ginibretail}). That is 
    $$\int_2^\infty t^{p-1} \exp\left\{-n(t-2)^2/2\right\} dt \leq 2^{p-2}\cdot  \frac{2^{\frac{p-1}{2}}\Gamma\left(p/2\right)}{2n^{\frac{p-1}{2}} \sqrt{\pi} }+ 4^{p-2}  $$
    Hence 
    $$\E[\|G_n\|^p] \leq 2^p+ \frac{p2^{\frac{p-1}{2}}\Gamma\left(p/2\right)}{2n^{\frac{p-1}{2}} \sqrt{\pi}}+p4^{p-2} = 2^p+ \frac{2^{\frac{p-1}{2}}\Gamma\left(p/2+1\right)}{n^{\frac{p-1}{2}} \sqrt{\pi}}+p4^{p-2}\leq 2^p +\left( \frac{\sqrt{p}}{n^{\frac{p-1}{2p}}}\right)^{p}+ 5^p$$
    Now, since $p\leq \sqrt{n}$ and using the fact that all the terms in the above inequality are positive
    $$
        \E[\|G_n\|^p]^{\frac{1}{p}} \leq 2+ \frac{\sqrt{p}}{n^{\frac{p-1}{2p}}} + 5
    $$
    Since for $x>1$ the function $x^{\frac{x}{x-1}}$ is increasing, and we are assuming that $ p\leq 2n$ we have $p^{\frac{p}{p-1}}\leq (2n)^{\frac{2n}{2n-1}}\leq 4n$. Thus
    $p \leq 4^{\frac{p-1}{p}} n^{\frac{p-1}{p}}$, which implies $\sqrt{p}\leq 2 n^{\frac{p-1}{2p}}$ and concludes the proof.   
\end{proof}

\section{Proof of Theorem \ref{thm:gaps} in the Gaussian Case} \label{sec:gaussian}

We will be terse, as the structure of the proof is identical. When $z \in R$, Theorem \ref{thm:gaussianrealrealsingular} gives
\begin{align}
    \P\left[|\Lambda(A + \gamma \bG_n) \cap D(z,r)| \ge 2 \right] 
    &\le \min_{x>0}\left\{ \frac{nrx}{\gamma} + 4e^2\left(\frac{nr}{2\gamma x}\right)^{4}\right\} \nonumber \\
    &= \frac{5 e^{2/5}}{4} (nr/\gamma)^{8/5}
    \le 2(nr/\gamma)^{8/5}.
\end{align}
Similarly, using Theorem \ref{prop:gaussiansingvalscomplex} for $z \notin \R$,
\begin{align*}
    \P\left[|\Lambda(A + \gamma \bG_n) \cap D(z,r)| \ge 2 \right] 
    &\le \min_{x> 0}\left\{ \frac{\sqrt{7e}n^4}{2\gamma^3}\frac{(9\gamma + \|A\| + |\Re z|)^2 + |\Im z|^2}{|\Im z} (rx)^2 \right. \\
    &\qquad + \left. \frac{4\cdot 7^2e^2n^{14}}{8\gamma^{12}}\left(\frac{(9\gamma + \|A\| + |\Re z|)^2 + |\Im z|^2}{|\Im z|}\right)^{4}(r/x)^8 \right\} \\
    &= \frac{5 (7 e)^{4/5}}{4\cdot 2^{3/5}}\left(\frac{(9\gamma + \|A\| + |\Re z|)^2 + |\Im z|^2}{|\Im z|}\right)^{8/5} n^6 r^{16/5}\gamma^{-24/5} \\
    &\le 9\left(\frac{(9\gamma + \|A\| + |\Re z|)^2 + |\Im z|^2}{|\Im z|}\right)^{8/5} n^6 r^{16/5}\gamma^{-24/5}
\end{align*}
Using the same net as in the original proof, and taking $R \defeq \|A\| + 4\gamma$,
\begin{align*}
    \P\left[\gap(A + \gamma\bG_n) \le s \right]
    &\le \sum_{z \in \calN_{2s}^{\R}}\P\left[|\Lambda(A + \gamma\bG_n) \cap D(z,3s/2)| \ge 2 \right] 
        + \sum_{z \in \calN_{\delta}^{\R}}P\left[|\Lambda(A + \gamma\bG_n) \cap D(z,\sqrt{2}\delta)| \ge 2 \right] \\
        &\qquad + \sum_{z \in \calN_{\delta,s}^{\C}}\P\left[\Lambda(A + \gamma\bG_n) \cap D(z,\sqrt{5/4} s)| \ge 2 \right]
        + \P\left[\|A + \gamma\bG_n\| \ge R \right] \\
    &\le \frac{3(\|A\| + 4\gamma)}{2s}\cdot 2(3ns/2\gamma)^{8/5} + \frac{3(\|A\| + 4\gamma)}{2\delta}\cdot 2(\sqrt 2 n\delta/\gamma)^{8/5} \\
        & \qquad + 6\left(\frac{\|A\| + 4\gamma}{s}\right)^2 \cdot 9\left(\frac{4(\|A\| + 6.5\gamma)^2}{\delta}\right)^{8/5} n^6 (\sqrt{5/4}s)^{16/5}\gamma^{-24/5} 
        + e^{-2n} \\
    &\le 6\left(\|A\| + 4\gamma\right)(n/\gamma)^{8/5}s^{3/5} + 6\left(\|A\| + 4\gamma \right)(n/\gamma)^{8/5}\delta^{3/5} \\
        &\qquad + 800 \left(\|A\| + 6.5\gamma\right)^{26/5} n^6 s^{6/5}\delta^{-8/5}\gamma^{-24/5} + e^{-2n}.
\end{align*}
Finally, optimizing in $\delta$ using the same argument as the main proof, and $\gamma < 1$,
\begin{align*}
    \P\left[\gap(A + \gamma\bG_n) \le s \right] 
    &\le 6\left(\|A\| + 4\gamma\right)(n/\gamma)^{8/5}s^{3/5} \\
        &\qquad + 2\left(6\left(\|A\| + 4\gamma \right)(n/\gamma)^{8/5}\right)^{8/11}\left(800 \left(\|A\| + 6.5\gamma\right)^{26/5} n^6 \gamma^{-24/5}\right)^{3/11}s^{18/55} + e^{-2n} \\
    &\le 6(\|A\| + 4\gamma)(n/\gamma)^{8/5}s^{3/5} + 7 (\|A\| + 6.5\gamma)^{118/55}n^{64/55}n^{18/11}\gamma^{-136/55}s^{18/55} + e^{-2n} \\
    &\le 15 \left(\|A\| + 7\right)^3 n^3\gamma^{-5/2}s^{1/3} + e^{-2n}.
\end{align*}
As in the non-Gaussian case, we separately state a tail bound for $\Im_{\min}$:
\begin{equation}
    \label{eq:im-min-standalone-gaussian}
    \P\left[\Im_{\min}(A + \gamma \bG_n) \le \delta\right] \le 6\left(\|A\| + 4\gamma\right)(n/\gamma)^{8/5}\delta^{3/5}.
\end{equation}

\end{document}